\documentclass[11pt]{amsart}
\pdfoutput=1

\usepackage[dvipsnames, table, xcdraw]{xcolor}
\usepackage{upgreek}
\usepackage{amssymb,amsmath,mathtools,amsfonts,amsthm}
\usepackage[mathscr]{eucal}
\usepackage{fullpage}
\usepackage{color}
\usepackage{caption}
\usepackage{subcaption}
\usepackage{enumitem}
\usepackage{soul}

\usepackage[colorlinks]{hyperref}
\definecolor{goodcolor}{RGB}{240,255,240}
\definecolor{headercolor}{RGB}{255,255,240}
\definecolor{mylinkcolor}{RGB}{0,0,255}
\definecolor{mycitecolor}{RGB}{169,169,169}
\definecolor{myurlcolor}{RGB}{255,20,147}
\hypersetup{
  colorlinks=true,
  urlcolor=myurlcolor,
  citecolor=mycitecolor,
  linkcolor=mylinkcolor,
  linktoc=page,
  breaklinks=true
}

\usepackage{url} 
\urlstyle{tt}
\usepackage[numbers]{natbib}
\usepackage{setspace} \setstretch{1.2} \setlength{\bibsep}{0.0pt}
\usepackage{indentfirst} 
\usepackage{listings} 

\usepackage{colonequals} 
\usepackage{nicefrac}

\usepackage{tikz-cd,tikz} 
\usepackage{cleveref} 
\usetikzlibrary{positioning}

\usepackage{float}
\usepackage{adjustbox} 
\newcommand{\PreserveBackslash}[1]{\let\temp=\\#1\let\\=\temp}
\newcolumntype{C}[1]{>{\PreserveBackslash\centering}p{#1}}
\usepackage{diagbox}
\usepackage{multirow}
\usepackage{hhline}
\usepackage{booktabs}
\usepackage{placeins}

\usepackage{algorithmic}
\usepackage[ruled]{algorithm}
\makeatletter 
 
\@addtoreset{algorithm}{section} 
\makeatother

\numberwithin{equation}{section}

\newtheorem{theorem}[algorithm]{Theorem}

\newtheorem{lemma}[algorithm]{Lemma}
\newtheorem{coro}[algorithm]{Corollary}
\newtheorem{conjecture}[algorithm]{Conjecture}
\newtheorem{proposition}[algorithm]{Proposition}

\theoremstyle{definition} 
\newtheorem{defn}[algorithm]{Definition}
 
\newtheorem{question}[algorithm]{Question}

\newtheorem{remark}[algorithm]{Remark}

\numberwithin{table}{section}
\numberwithin{figure}{section}
\makeatletter
\setlength{\@fptop}{0pt plus 1fil}
\setlength{\@fpsep}{8pt plus 2fil}
\makeatother
\addtolength{\textfloatsep}{0mm}

\newcommand{\oalpha}{\overline{\alpha}}

\newcommand{\ZZ}{\mathbf{Z}} 
\newcommand{\QQ}{\mathbf{Q}} 
\newcommand{\CC}{\mathbf{C}} 
\newcommand{\FF}{\mathbf{F}} 
 
\newcommand{\Fq}{\FF_q}

\newcommand{\Qbar}{\overbar{\QQ}}
\newcommand{\Fqbar}{\overbar{\FF}_q}

\newcommand{\OO}{\mathscr{O}}
\newcommand{\mfp}{\mathfrak{p}}
\newcommand{\dpr}{\mathfrak{A}}
\newcommand{\mfP}{\mathfrak{P}}

\newcommand{\brk}[1]{ \mathopen{}\left\lbrace #1 \right\rbrace\mathclose{}}

\newcommand{\cdef}[1]{{\color{blue}{\textit{#1}}}}

\newcommand{\ideal}[1]{\langle #1 \rangle}

\newcommand{\cO}{\mathcal{O}} 
\newcommand{\cP}{\mathcal{P}}
\newcommand{\cR}{\mathcal{R}}
\newcommand{\vecsp}[2]{%
\ensuremath%
#1\langle#2\rangle%
}
\newcommand{\agrp}[1]{\Gamma\!_{#1}}

\newcommand{\exceptional}[3]{\mathcal{E}_{#1}(#2,#3)}
\newcommand{\decor}[1]{\ensuremath \mathsf{#1}}

\renewcommand{\i}{\mathsf{i}}
\renewcommand{\j}{\mathsf{j}}

\newcommand{\g}{\mathsf{g}}
\newcommand{\1}{\mathsf{1}}

\newcommand{\overbar}[1]{\mkern 1.5mu\overline{\mkern-1.5mu#1\mkern-1.5mu}\mkern 1.5mu}

\DeclareMathOperator{\ord}{ord} 
 
\DeclareMathOperator{\Aut}{Aut}

\DeclareMathOperator{\Hom}{Hom} 
\DeclareMathOperator{\rank}{rk}

\DeclareMathOperator{\Gal}{Gal} 
 
\DeclareMathOperator{\Spec}{Spec}
 
\DeclareMathOperator{\lcm}{lcm} 
\DeclareMathOperator{\sgn}{sgn}

\renewcommand{\div}{\operatorname{div}}

\DeclareMathOperator{\End}{End}

\DeclareMathOperator{\Stab}{Stab}

\DeclareMathOperator{\Nm}{Nm}
\newcommand{\et}{\textnormal{\'et}}
\renewcommand{\subset}{\subseteq}

\newcommand{\nfield}[1]{\href{https://www.lmfdb.org/NumberField/#1}{\texttt{#1}}}
\usepackage{underscore} 
\newcommand{\avlink}[1]{\href{http://www.lmfdb.org/Variety/Abelian/Fq/#1}{\texttt{#1}}}

\makeatletter
\newcommand\newtag[2]{#1\def\@currentlabel{#1}\label{#2}}
\makeatother

\definecolor{wlabcol}{rgb}{0,0,0.3}

\setlist[itemize]{leftmargin=30pt, itemsep=2pt}
\setlist[enumerate]{leftmargin=30pt, itemsep=2pt}

\newcommand{\cond}{Assumption~\protect\hyperlink{assumption-star}{$(\star)$}}

\title[Galois groups of simple abelian varieties over finite fields and exceptional Tate classes]{Galois groups of simple abelian varieties over \\ finite fields and exceptional Tate classes}
\date{\today}

\author{Santiago Arango-Piñeros} 
\address{Department of Mathematics, Emory
University, Atlanta, GA 30322, USA}
\email{santiago.arango.pineros@gmail.com}
\urladdr{\url{https://sarangop1728.github.io/}}

\author{Sam Frengley} 
\address{School of Mathematics, University of Bristol, 
Bristol, BS8 1UG, UK}
\email{sam.frengley@bristol.ac.uk}
\urladdr{\url{https://samfrengley.github.io/}}

\author{Sameera Vemulapalli}
\address{Department of Mathematics,
  Harvard University}
\email{vemulapalli@math.harvard.edu}
\urladdr{\url{https://web.math.princeton.edu/~sameerav/}}


\usepackage{longtable}
\allowdisplaybreaks

\begin{document}

\begin{abstract}
We prove new cases of the Tate conjecture for abelian varieties over finite fields, extending previous results of Dupuy--Kedlaya--Zureick-Brown, Lenstra--Zarhin, Tankeev, and Zarhin. Notably, our methods allow us to prove the Tate conjecture in cases when the angle rank is non-maximal.

Our primary tool is a precise combinatorial condition which, given a geometrically simple abelian variety $A/\Fq$ with commutative endomorphism algebra, describes whether $A$ has  \emph{exceptional classes} (i.e., $\Gal(\Fqbar/\Fq)$-invariant classes in $H_{\et}^{2r}(A_{\Fqbar}, \QQ_\ell(r))$ not contained in the span of classes of intersections of divisors). The criterion depends only on the Galois group of the minimal polynomial of Frobenius and its action on the Newton polygon of $A$.

Our tools provide substantial control over the isogeny invariants of $A$, allowing us to prove a number of new results. Firstly, we provide an algorithm which, given a Newton polygon and CM field, determines if they arise from a geometrically simple abelian variety $A/\Fq$ and, if so, outputs one such $A$. As a consequence we show that every CM field occurs as the center of the endomorphism algebra of an abelian variety $A/\Fq$. Secondly, we refine a result of Tankeev and Dupuy--Kedlaya--Zureick-Brown on angle ranks of abelian varieties. In particular, we show that ordinary geometrically simple varieties of prime dimension have maximal angle rank. 
\end{abstract}

\maketitle

\vspace{-8mm}
\setcounter{tocdepth}{1}
\tableofcontents
\vspace{-11mm}

\section{Introduction}
\label{sec:intro}
Let $p$ be a prime number, let $q$ be a power of $p$,
and write $G_{\Fq} = \Gal(\Fqbar/\Fq)$ for the absolute Galois group of $\Fq$. Let $A/\Fq$ be a simple abelian variety of dimension $g > 0$, let $\End^0(A) \colonequals \End(A) \otimes \QQ$ denote the endomorphism algebra of $A$, and let $\mathcal{Z}^r(A)$ denote the free abelian group supported on ($\Fq$-rational) cycles of codimension $r$ on $A$. Let $\ell \neq p$ be a prime number and write $\QQ_\ell(r)$ for the $r^{\text{th}}$ Tate twist of $\QQ_\ell$. In \cite{tate-cycles-zeta} Tate made the following conjecture.

\begin{conjecture}[The Tate conjecture for $A$]
  \label{conj:tate}
  For each $1 \leq r \leq g$ the cycle homomorphism
  \begin{equation*}
    c^r \colon \mathcal{Z}^r(A) \otimes \QQ_\ell \to H_{\et}^{2r}(A_{\Fqbar}, \QQ_{\ell}(r))^{G_{\Fq}}
  \end{equation*}
  is surjective.  
\end{conjecture}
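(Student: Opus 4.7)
The plan is to reduce \Cref{conj:tate} to a combinatorial question about multiplicative relations among the Frobenius eigenvalues on $H^1$, exploiting the special structure of abelian variety cohomology. Using the Künneth isomorphism $H_{\et}^{2r}(A_{\Fqbar}, \QQ_\ell) \cong \bigwedge^{2r} H_{\et}^1(A_{\Fqbar}, \QQ_\ell)$, a $G_{\Fq}$-invariant class in $H_{\et}^{2r}(A_{\Fqbar}, \QQ_\ell(r))$ corresponds to a Frobenius eigenvector in $\bigwedge^{2r} H^1$ with eigenvalue $q^r$, i.e., to a size-$2r$ subset $S$ of the multiset $\{\alpha_1, \ldots, \alpha_{2g}\}$ of Frobenius eigenvalues such that $\prod_{\alpha \in S} \alpha = q^r$. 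The image of $c^r$ always contains the \emph{Lefschetz subring} generated by classes of $\Fq$-rational divisors, together with classes arising from $\End^0(A)$ via graphs of endomorphisms, so the conjecture reduces to showing that every $G_{\Fq}$-invariant class is algebraic in this sense.

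The base case $r = 1$ is Tate's classical theorem for divisors. For $r \geq 2$ I would work in the setting of the abstract: $A$ geometrically simple with commutative endomorphism algebra. Then $F = \End^0(A)$ is a CM field, the Frobenius $\pi$ generates $F$ over $\QQ$, and the $\alpha_i$ are the Galois conjugates $\sigma(\pi)$ of $\pi$ under embeddings $\sigma \colon F \hookrightarrow \ol{\QQ}$. The Galois group $\Gamma$ of the minimal polynomial of $\pi$ permutes the $\alpha_i$ and, via the $p$-adic valuations $v(\alpha_i)$, acts on the Newton polygon slopes of $A$.

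With this dictionary, I would enumerate the \emph{trivial} multiplicative relations: those generated by $\alpha \cdot \ol{\alpha} = q$ (from the Weil pairing/Rosati involution) together with relations forced by the $\Gamma$-action on slopes. The Lefschetz and endomorphism classes correspond bijectively to the $\Gamma$-invariant subsets built from these trivial relations, so a Tate class is \emph{exceptional} precisely when it corresponds to a multiplicative relation not in this span. Hence \Cref{conj:tate} becomes equivalent to ruling out exceptional classes, and the criterion advertised in the abstract should express the (non-)existence of such classes purely in terms of $\Gamma$ and the Newton polygon.

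The main obstacle is, of course, classifying exceptional relations in general: a priori the multiplicative structure on $\{\alpha_i\}$ can receive extra relations from arithmetic coincidences (reflected precisely in the angle rank being submaximal), and detecting these from only $(\Gamma, \mathrm{NP}(A))$ is delicate. The plan is to phrase the existence of an exceptional $\Gamma$-invariant subset with integer slope sum as a representation-theoretic condition on the permutation action of $\Gamma$ on the $\alpha_i$ — naturally viewed inside the hyperoctahedral group $C_2 \wr S_g$ via complex conjugation pairing — and then to carry out a case analysis on the conjugacy classes of $\Gamma$ that are compatible with each admissible Newton polygon. Closing this case analysis is where all the combinatorial difficulty concentrates, and also clarifies why only new cases, rather than the full conjecture, are accessible by this route.
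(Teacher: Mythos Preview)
The statement you are addressing is a \emph{conjecture}, not a theorem proved in the paper; the paper establishes \Cref{conj:tate} only in new cases (those covered by \Cref{thm:others-coros}, \Cref{thm:exceptions}, and \Cref{coro:angle-corank-3}), not in general. Your outline is broadly the same strategy the paper uses: pass to $\bigwedge^{2r} H^1$, diagonalize Frobenius, identify Tate classes with multiplicative relations among the Frobenius eigenvalues, and compare with the subalgebra generated by divisor classes (this is the content of \Cref{sec:proof-tate-thm}, especially \Cref{lemma:wedge-iso,lemma:wedge-time,lemma:exceptional-class}).

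However, there is a genuine gap in your logic. You write that \Cref{conj:tate} ``becomes equivalent to ruling out exceptional classes''. This is false: the implication only goes one way. If there are no exceptional classes then every Tate class is a combination of intersections of divisors, hence algebraic by Tate's theorem for $r=1$, and \Cref{conj:tate} follows. But if exceptional classes \emph{do} exist, \Cref{conj:tate} may still hold --- one would need to exhibit algebraic cycles mapping to them, and nothing in your outline (or in the paper) supplies such cycles. The paper is explicit about this asymmetry: \Cref{thm:main-tate-thm} concludes the Tate conjecture only when the weighted permutation representation is \emph{not} exceptional, and in the exceptional case merely records that exceptional Tate classes exist, without settling their algebraicity. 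Your final paragraph concedes that only new cases are accessible, which is correct, but this is inconsistent with the earlier equivalence claim.

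A secondary imprecision: you fold ``classes arising from $\End^0(A)$ via graphs of endomorphisms'' into the span you compare against. In the paper's setup (geometrically simple, commutative endomorphism algebra) the relevant comparison is solely with the algebra of intersections of divisors (see \Cref{lemma:wedge-time}); endomorphism correspondences live on $A \times A$ and do not enlarge the span inside $H_{\et}^{2r}(A_{\Fqbar},\QQ_\ell(r))$ beyond what divisors already give.
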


\subsection{The Tate conjecture and Galois groups}
\label{sec:tate-conjecture}
Tate~\cite{Tate1966} proved \Cref{conj:tate} for divisor classes (i.e., when $r = 1$). Subsequent works of Tankeev~\cite{Tankeev1984}, Zarhin~\cite{Zarhin91}, Lenstra--Zarhin~\cite{LenstraZarhin1993}, and Dupuy--Kedlaya--Zureick-Brown~\cite{DupuyKedlayaZureick-Brown22} leverage this to prove, in certain cases, that $H_{\et}^{2r}(A_{\Fqbar}, \QQ_\ell(r))^{G_{\Fq}}$ is generated by classes of intersections of divisors. We say a class in $H_{\et}^{2r}(A_{\Fqbar}, \QQ_\ell(r))^{G_{\Fq}}$ is \cdef{exceptional}\footnote{Many authors refer to an exceptional class as ``exotic''.} if is not contained in the span of the image of intersections of divisors under the cycle map $c^r$.

When $A$ is geometrically simple and has commutative endomorphism algebra $K_A = \End^0(A)$ we prove \Cref{thm:main-tate-thm}. This provides a simple combinatorial condition,
in terms of the action of the Galois group $G_A$ of the Galois closure of $K_A$ on the Newton slopes (as defined in \Cref{sec:notation}),
which exactly classifies when $H_{\et}^{2r}(A_{\Fqbar}, \QQ_\ell(r))^{G_{\Fq}}$ contains exceptional classes.

To illustrate the utility of \Cref{thm:main-tate-thm}, we provide a number of consequences. Namely, we reprove and generalize results of Lenstra--Zarhin, Tankeev, and Zarhin. Let $K_A^+ \subset K_A$ be the maximal totally real subfield and let $G_A^+$ be the Galois group of $K_A^+$.

\begin{theorem}
  \label{thm:others-coros}
  Let $A/\Fq$ be a geometrically simple abelian variety of dimension $g$ with commutative endomorphism algebra. Then, for each $1 \leq r \leq g$, the algebra $H_{\et}^{2r}(A_{\Fqbar}, \QQ_\ell(r))^{G_{\Fq}}$ contains no exceptional classes (and in particular the Tate conjecture holds for $A$) if any of the following holds:
  \begin{enumerate}[label=(\roman*)]
  \item \label{thm:tankeev-p}
    \textnormal{(Tankeev~{\cite[Theorem~1.2]{Tankeev1984}})}.
    The dimension $g$ is prime.
    
  \item \label{thm:zarhin}
    \textnormal{(Zarhin~\cite[Theorem~2.7.3]{Zarhin91})}.
    The abelian variety $A$ is of \emph{K3-type}, i.e., the Newton polygon of $A$ consists of a segment of length $1$ and slope $0$, a segment of length $2g - 2$ and slope $1/2$, and a segment of length $1$ and slope $1$. 
    \begin{equation*}
      [0, \underbrace{\tfrac{1}{2}, \dots, \tfrac{1}{2}}_{2g - 2}, 1]
    \end{equation*}
    
  \item \label{thm:lenstra-zarhin}
    \textnormal{(Lenstra--Zarhin~{\cite[Remarks~6.6~and~6.7]{LenstraZarhin1993}})}.
    The Newton polygon of $A$ has a segment of length $2$ and slope $1/2$ and all other slopes are contained in $\ZZ_{(2)}$.
    \begin{equation*}
       [\underbrace{\dots\dots\vphantom{\tfrac{1}{2}}}_{\in \ZZ_{(2)}}, \tfrac{1}{2}, \tfrac{1}{2}, \underbrace{\dots\dots\vphantom{\tfrac{1}{2}}}_{\in \ZZ_{(2)}}]
    \end{equation*}

  \item \label{thm:n-trans}
    There exists integers $d,n\geq 1$ such that the group $G_A^+$ is $d$-transitive, the Newton polygon of $A$ has a segment of length $2n$ and slope $1/2$ and all other slopes are contained in $\ZZ_{(2)}$, where $n \in \{ d, g-d \}$ and if $n$ is even then the dimension $g$ is odd.
    \begin{equation*}
      [\underbrace{\dots\dots\vphantom{\tfrac{1}{2}}}_{\in \ZZ_{(2)}}, \underbrace{\tfrac{1}{2}, \dots, \tfrac{1}{2}}_{2n}, \underbrace{\dots\dots\vphantom{\tfrac{1}{2}}}_{\in \ZZ_{(2)}}]
    \end{equation*}
        
  \item
     \label{thm:dvarepsilon-trans}
    There exists integers $d,n \geq 1$ and an odd integer $\varepsilon \geq 1$ such that the dimension $g$ is odd, the group $G_A^+$ is $d\varepsilon$-transitive, and the Newton polygon of $A$ has a segment of length $d\varepsilon$ and slope $n/\varepsilon$, a segment of length $d\varepsilon$ and slope $(\varepsilon - n)/\varepsilon$, and all other slopes are contained in $\ZZ_{(\varepsilon)}$.
    \begin{equation*}
      [\underbrace{\dots\dots\vphantom{\tfrac{1}{2}}}_{\in \ZZ_{(\varepsilon)}}, \underbrace{\tfrac{n}{\varepsilon},\dots,\tfrac{n}{\varepsilon}}_{d\varepsilon}, \underbrace{\dots\dots\vphantom{\tfrac{1}{2}}}_{\in \ZZ_{(\varepsilon)}}, \underbrace{\tfrac{\varepsilon-n}{\varepsilon},\dots,\tfrac{\varepsilon-n}{\varepsilon}}_{d\varepsilon}, \underbrace{\dots\dots\vphantom{\tfrac{1}{2}}}_{\in \ZZ_{(\varepsilon)}}]
    \end{equation*}
  \end{enumerate}
\end{theorem}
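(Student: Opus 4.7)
The plan is to deduce all five parts from \Cref{thm:main-tate-thm}, which reduces the existence of exceptional classes to a combinatorial condition on the action of $G_A$ on the Newton slopes of $A$. In each case I will verify that this criterion is not satisfied, so that $H_{\et}^{2r}(A_{\Fqbar}, \QQ_\ell(r))^{G_{\Fq}}$ is generated by products of divisor classes for every $1 \leq r \leq g$. Since the Tate conjecture is known for divisors, this yields \Cref{conj:tate} for $A$ as well.

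Parts (i)--(iii) are essentially repackagings, in the language of \Cref{thm:main-tate-thm}, of the original arguments of Tankeev, Zarhin, and Lenstra--Zarhin. In (i), the slope multiset has $2g$ entries paired by the symmetry $s \leftrightarrow 1-s$; when $g$ is prime, the combinatorial condition collapses to a single transitivity statement on slopes modulo this involution, which holds because $A$ is geometrically simple. Case (ii) corresponds to a very rigid Newton polygon: only three distinct slopes appear, and the length-$(2g-2)$ block at $1/2$ dominates the combinatorics, so the criterion is directly verifiable. In (iii), using that the non-$1/2$ slopes lie in $\ZZ_{(2)}$, I would split the slope multiset into the length-$2$ block at $1/2$ and its complement, and check that no $G_A$-invariant relation can mix the two blocks to produce an exceptional class.

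The new content lies in (iv) and (v). For (iv), the $d$-transitivity of $G_A^+$ on the slope-orbit of length $2n$, combined with the hypothesis $n \in \{d, g-d\}$, forces any $G_A$-invariant combinatorial relation supported on that orbit to come from the trivial symmetry $s \mapsto 1-s$; the parity assumption that $g$ be odd when $n$ is even then rules out global doubling relations that would otherwise survive. Case (v) is the most delicate: here the two orbits of slopes $n/\varepsilon$ and $(\varepsilon-n)/\varepsilon$ each carry a $d\varepsilon$-transitive $G_A^+$-action, and I must show that they cannot combine with the $\ZZ_{(\varepsilon)}$-slopes to produce an exceptional $G_A$-invariant. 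I expect this last check to be the main obstacle, requiring careful bookkeeping of how the $G_A$-orbit on all $2g$ slopes decomposes under $G_A^+$, and exploiting the oddness of both $g$ and $\varepsilon$ to eliminate parasitic invariants that would otherwise arise from the $d\varepsilon$-transitive action.
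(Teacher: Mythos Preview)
Your high-level strategy---reduce everything to the criterion of \Cref{thm:main-tate-thm} and then rule out exceptionality combinatorially---is exactly what the paper does. But what you have written is a plan, not a proof, and the plan is missing the concrete combinatorial mechanisms that actually make each case work.

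The first gap is that you never write down what ``exceptional'' means: the criterion is the existence of disjoint nonempty $T^+, T^- \subset \{\decor{1},\dots,\decor{g}\}$ with $|T^+|+|T^-|$ even and $\sum_{\i\in T^+} w(\sigma^{-1}\i) - \sum_{\j\in T^-} w(\sigma^{-1}\j) = (|T^+|-|T^-|)/2$ for all $\sigma\in G$. Everything hinges on analyzing this equation, and your talk of ``$G_A$-invariant relations'' and ``transitivity on slopes'' never engages with it. For \ref{thm:tankeev-p}, your sketch (``collapses to a transitivity statement on slopes'') is not the right mechanism: the paper's argument is representation-theoretic, using \Cref{prop:refined-gen-tankeev} to show $\dim\ker\Phi_\rho^+ \in \{0,1\}$, and then in the $1$-dimensional case observing that the vector $\sum_{T^+}\i - \sum_{T^-}\bar\j$ must span the kernel, hence be fixed up to sign by $G$, hence have support $T^+\sqcup T^-$ equal to all of $\{\1,\dots,\g\}$---contradicting parity since $g$ is odd and $|T^+\sqcup T^-|$ is even.

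For \ref{thm:zarhin}--\ref{thm:n-trans} the paper does not treat them separately but proves \ref{thm:n-trans} directly (with \ref{thm:zarhin}, \ref{thm:lenstra-zarhin} the $d=1$ cases). The key step you are missing is a parity argument: reducing the coefficient equation modulo $\ZZ_{(2)}$ shows that for every $\sigma$, the set $\Gamma_\sigma = \{\i : \sigma^{-1}\i \text{ has slope }1/2\}$ satisfies $|\Gamma_\sigma \cap T| \equiv 0 \pmod 2$, where $T=T^+\sqcup T^-$. The $d$-transitivity of $G^+$ (and $n\in\{d,g-d\}$) then means \emph{every} $n$-element subset of $\{\1,\dots,\g\}$ arises as some $\Gamma_\sigma$, and a short case analysis on the parities of $n$ and $g$ produces a subset $\Gamma$ with $|\Gamma\cap T|$ odd. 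Your description (``forces any relation to come from the trivial symmetry $s\mapsto 1-s$'') does not capture this, and in particular does not explain the role of the parity hypothesis on $g$.

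For \ref{thm:dvarepsilon-trans} you are right that it is the hardest case, but ``careful bookkeeping'' is not a plan. The paper first reduces (via \cite{DupuyKedlayaZureick-Brown22}) to the case $|C|=2$, splits $G \cong C_2\times G^+$, and introduces a sign function $\sgn:\{\1,\dots,\g\}\to\{\pm 1\}$ coming from the splitting. The coefficient equation modulo $\ZZ_{(\varepsilon)}$ then becomes a congruence on signed intersection counts, and $d\varepsilon$-transitivity plus a case split on whether $\sgn$ is constant on the slope-$n/\varepsilon$ block finishes the argument. None of this structure is visible in your outline.
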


\begin{remark}
  \label{remark:notes-on-others-coros}
  We make the following notes on \Cref{thm:others-coros}:
  \begin{enumerate}[label=(\arabic*)]
  \item
    The claim in \Cref{thm:others-coros}\ref{thm:tankeev-p} is true even without the assumption that $A$ has commutative endomorphism algebra, as is proved in \cite{Tankeev1984}.
  \item
    A simple abelian variety whose Newton polygon is of the form in \Cref{thm:others-coros}\ref{thm:zarhin} or \ref{thm:lenstra-zarhin} is necessarily geometrically simple.
  \item \label{i:note-special-case} 
    In \Cref{thm:others-coros}\ref{thm:n-trans}, if $g$ is odd and $d = 2$ then the condition that $G_A^+$ is $d$-transitive may be replaced with the condition that $G_A^+$ to be primitive (i.e., $K_A^+$ has no nontrivial proper subfields).
  \item \label{i:note-less-restricitive}
    The claim in \Cref{thm:others-coros}\ref{thm:n-trans} is strictly more general than \ref{thm:zarhin} and \ref{thm:lenstra-zarhin} (those being the cases when $d = 1$). Moreover, in \ref{thm:n-trans},  the condition that $G_A^+$ is $d$-transitive may be replaced with the condition that there exists an element of $G_A^+$ inducing a bijection between any two $d$-element subsets (but possibly not respecting the order of the elements).
  \end{enumerate}
\end{remark}

To every simple abelian variety of dimension $\geq 2$ with commutative endomorphism algebra, we associate a \cdef{weighted permutation representation} (see \Cref{def:wpr}), which encodes both the Galois group $G_A$ and its action on the $q$-adic valuations of the Frobenius eigenvalues. With this language the conditions of \Cref{thm:main-tate-thm} are elementary (though computationally expensive) to check exhaustively for any given Newton polygon, simply by enumerating subgroups of $C_2 \wr S_g$. In \Cref{sec:admissible} we provide explicit criteria (see \Cref{prop:AV-is-admissible}) which preclude certain pairs of Galois groups and Newton polygons occurring together (in fact, we these conditions are sharp, see \Cref{prop:ord-igp}). Combining these results, and employing the computer algebra system~\texttt{Magma}~\cite{magma}, we prove \Cref{thm:exceptions} covering those $g \leq 6$.

\begin{theorem}
  \label{thm:exceptions}
  Let $A$ be a geometrically simple abelian variety of dimension $g \leq 6$ with commutative endomorphism algebra. Then the algebra $H_{\et}^{2r}(A_{\Fqbar}, \QQ_\ell(r))^{G_{\Fq}}$ contains an exceptional class for some $1 \leq r \leq g$ if and only if the weighted permutation representation associated to $A$ (see~\Cref{def:wpr}) is one of those listed in \textnormal{\cite{OurElectronic}}. Moreover, when $g \leq 5$, every weighted permutation representation listed in \textnormal{\cite{OurElectronic}} occurs for some abelian variety $A$ (explicit examples are listed in \Cref{app:tables}).
\end{theorem}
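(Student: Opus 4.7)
The plan is to reduce the theorem to a finite computation that can be carried out by \texttt{Magma}, exploiting the combinatorial criterion of \Cref{thm:main-tate-thm}. By that theorem, whether $H_{\et}^{2r}(A_{\Fqbar}, \QQ_\ell(r))^{G_{\Fq}}$ contains an exceptional class depends only on the action of $G_A$ on the Newton slopes, i.e., on the weighted permutation representation attached to $A$ in \Cref{def:wpr}. So the classification statement reduces to: enumerate all weighted permutation representations that could arise from a geometrically simple abelian variety of dimension $g \leq 6$ with commutative endomorphism algebra, and for each one test the criterion of \Cref{thm:main-tate-thm}.

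For the enumeration, I would iterate over transitive subgroups $G \leq C_2 \wr S_g$ of degree $2g$ that are compatible with being a CM Galois group (i.e., $G$ surjects onto a transitive subgroup $G^+ \leq S_g$ via the obvious quotient, and the kernel contains the ``total complex conjugation''). These are in one-to-one correspondence with pairs $(G^+, H)$ where $G^+$ is a transitive subgroup of $S_g$ and $H$ is an appropriate index-$2$ subgroup of $G^+ \ltimes C_2^g$; for $g \leq 6$ the relevant transitive groups are tabulated in \Magma{}. For each such $G$, I enumerate $G$-stable weight functions on the $2g$ points corresponding to slope sequences that form a valid Newton polygon (symmetric, supported in $[0,1]$, with $q$-integral multiplicities). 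Then I apply the necessary conditions in \Cref{prop:AV-is-admissible} to eliminate pairs that cannot be realized by any geometrically simple abelian variety with commutative endomorphism algebra. On the surviving list I test the criterion of \Cref{thm:main-tate-thm} in each codimension $1 \leq r \leq g$; those weighted permutation representations for which at least one $r$ yields an exceptional class are precisely the entries of \cite{OurElectronic}, proving the first assertion.

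For the ``moreover'' part, I must exhibit, for every weighted permutation representation on the list with $g \leq 5$, an actual geometrically simple abelian variety $A/\Fq$ with commutative endomorphism algebra whose associated weighted permutation representation is the prescribed one. This is where the algorithm alluded to in the abstract (and developed earlier in the paper via the criteria in \Cref{sec:admissible}, in particular the sharpness statement \Cref{prop:ord-igp}) comes in: given a Newton polygon and a CM field realizing the desired $G_A$-action, the algorithm searches for and, if one exists, outputs a Weil $q$-number $\pi$ whose minimal polynomial has this Galois data, then returns the corresponding isogeny class of $A$ via Honda--Tate theory. Concretely, for each weighted permutation representation on the target list one picks a CM field $K$ of degree $2g$ with Galois group of the Galois closure equal to $G_A$, and then searches over primes $p$ (and small powers $q$) and prime ideals $\mfp \mid p$ of $K^+$ with the prescribed splitting behavior so that the associated $\pi$ has the right Newton polygon. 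The tables in \Cref{app:tables} record the outputs of this search.

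The main obstacle will be the realization step for $g \leq 5$: the algorithm is guaranteed by the criteria in \Cref{sec:admissible} to succeed in principle, but in practice one must verify for each entry on the list that some $q$ and some prime decomposition actually yield a Weil number (and thus a geometrically simple abelian variety) with the prescribed weighted permutation representation. The enumeration of transitive groups in $C_2 \wr S_6$ and the subsequent exceptional-class tests are routine but voluminous, whereas the realization step requires number-theoretic input (existence of CM fields with prescribed Galois group and prime splitting) and is the part most likely to require case-by-case attention; this is also why the existence half of the theorem is stated only for $g \leq 5$.
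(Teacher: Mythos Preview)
Your approach is essentially the same as the paper's: reduce to \Cref{thm:main-tate-thm}, enumerate weighted permutation representations subject to the admissibility constraints of \Cref{prop:AV-is-admissible}, test exceptionality, and then realize the surviving cases via \Cref{alg:wpr-to-weil} and Honda--Tate (the paper also shortcuts $g=2,3,5$ using Tankeev's theorem, and notes that the exceptional list for $g=5$ is empty, making the realization step vacuous there).

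Two small corrections. First, ``$G$-stable weight functions'' is not what you want: since $G$ is transitive on $X_{2g}$, a $G$-invariant $w$ would be constant (supersingular). The paper instead fixes each weight function $w$ and enumerates $\Stab(w)$-conjugacy classes of transitive $G\subseteq W_{2g}$ containing complex conjugation; the compatibility with $w$ enters only through the admissibility filtration $D\subseteq\Stab(w)$. Second, \Cref{prop:AV-is-admissible} alone does not filter for geometric simplicity; you must also apply the combinatorial criterion of \Cref{lemma:geom-irred} (equivalently, check $\Stab_G(\Phi_\rho(\decor{1}))=\Stab_G(\decor{1})$) before testing exceptionality, since \Cref{thm:main-tate-thm} is stated only for geometrically simple $\rho$.
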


It is likely that when $g = 6$ every weighted permutation representation listed in \cite{OurElectronic} occurs for some abelian variety $A$. Indeed, as we discuss in \Cref{sec:inverse-galo-probl}, the only obstruction to constructing such an abelian variety is the existence of a CM field with prescribed Galois group and ramification data. In all cases but one this would follow from standard conjectures on the tame Grunwald problem (see \Cref{conj:tame-grunwald}). 

Let $\cR_A$ denote the set of $2g$ complex roots of the minimal polynomial the Frobenius endomorphism $\pi_A \colon A \to A$ (see~\Cref{sec:notation} for more details).

The idea of exploiting the Galois action on the Frobenius eigenvalues to study the Tate conjecture dates back to at least a 1981 letter from Serre to Ribet~\cite[pp.~6--11]{Serre2013} and Tankeev's work~\cite{Tankeev1984}. Tankeev's proof of the Tate conjecture for geometrically simple primefolds~\cite{Tankeev1984} relies on the fact that there exists a (nontrivial) relation of the form
\[
  \prod_{\pi \in S} \pi  = \zeta q^{\lvert S \rvert/2}
\]
for some root of unity $\zeta \in \Qbar$ and subset $S \subset \cR_A$ of even cardinality if and only if $A$ has exceptional classes (indeed, one can show that these classes occur in codimension $|S|/2$, see \Cref{lemma:exceptional-class}). This idea is also employed in \cite{Zarhin91} and \cite{LenstraZarhin1993}. In each case, the ``if'' direction of this statement is used to prove that, under various hypotheses, $A$ does not have exceptional classes.

We push this technique to its limit and set up combinatorial conditions so that in many cases (e.g., in dimensions $\leq 6$ in \Cref{thm:exceptions}), we may classify exactly when a geometrically simple abelian variety has exceptional classes.

\subsection{Angle ranks of abelian varieties}
\label{sec:angle-ranks-abelian}
For an abelian variety $A$, following \cite[Definition~1.2]{DupuyKedlayaZureick-Brown22}, let $\Gamma\!_A \subset \Qbar^\times$ be the subgroup generated by the \cdef{normalized Frobenius eigenvalues} $\pi / \sqrt{q}$ for $\pi \in \cR_A$ and define the \cdef{angle rank of $A$} to be $\delta_A \colonequals \dim_{\QQ}(\Gamma\!_A \otimes_\ZZ \QQ)$. 

Zarhin \cite[Remark~3.2(a)~and~Theorem~3.4.3]{Zarhin94} (see also \cite{Zarhin90,LenstraZarhin1993,Zarhin15}) has proved that some power $A^n$ of $A$ has an exceptional class if and only if the angle rank of $A$ is not maximal (i.e., $\delta_A < g$, equivalently, $A$ does not have a base change which is a ``neat'' abelian variety in the sense of Zarhin \cite[Section~3]{Zarhin94}). It is therefore notable that our techniques allow us to prove the Tate conjecture for many abelian varieties which \emph{do not have maximal angle rank}. For example, we have the following consequence of \Cref{thm:exceptions}.

\begin{coro}
  \label{coro:angle-corank-3}
  Consider the (isogeny class of an) abelian $6$-fold $A/\FF_3$ with minimal polynomial of Frobenius given by
  \begin{equation*}
    P_A(T) = T^{12} - 3 T^{11} + 14 T^9 - 21 T^8 - 27 T^7 + 120 T^6 - 81 T^5 - 189 T^4 + 378 T^3 - 729 T + 729.
  \end{equation*}
  Then $A$ has commutative endomorphism algebra, is geometrically simple, has angle rank $\delta_A = 3$, and has no exceptional Tate classes. In particular, the Tate conjecture holds for $A$.
\end{coro}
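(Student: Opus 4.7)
The plan is to reduce every claim to an effective computation with the polynomial $P_A(T)$ and then invoke \Cref{thm:exceptions}. First I would verify that $P_A(T)$ is irreducible over $\QQ$: this makes $K_A = \QQ[T]/(P_A)$ a degree-$2g$ CM field and, via Honda--Tate, forces $\End^0(A) = K_A$, so the endomorphism algebra is commutative. For geometric simplicity I would use the standard criterion that $\QQ(\pi^n) = \QQ(\pi)$ for all $n \geq 1$, equivalently that no quotient $\pi_i/\pi_j$ of distinct roots of $P_A$ is a root of unity; since the group of roots of unity in $K_A$ is finite and explicit, this reduces to a bounded finite check. Third, the Newton polygon of $A$ is read off from the $3$-adic valuations of the coefficients of $P_A$: the breakpoints occur at $(0,6), (3,3), (6,1), (9,0), (12,0)$, giving slopes $0, \tfrac{1}{3}, \tfrac{2}{3}, 1$ each with multiplicity three.

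Next I would compute the Galois group $G_A$ of the splitting field $L$ of $P_A$ (e.g., via \texttt{Magma}'s \texttt{GaloisGroup}) together with its action on the twelve roots; pairing this action with the Newton slope data assembles the weighted permutation representation in $C_2 \wr S_6$ of \Cref{def:wpr}. By \Cref{thm:exceptions}, $A$ has no exceptional Tate classes if and only if this weighted permutation representation is absent from the list in \cite{OurElectronic}---a finite lookup. The angle rank $\delta_A$ is then the $\QQ$-rank of the subgroup of $L^\times$ generated by $\{\pi_i/\sqrt{q}\}$ modulo torsion, equivalently $2g$ minus the rank of the lattice of multiplicative relations among the $\pi_i$ beyond the trivial relations $\pi_i \cdot (q/\pi_i) = q$; this lattice can be computed from the prime factorizations of the $\pi_i$ in $\OO_L$ via a standard $S$-unit computation.

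With these ingredients in hand the corollary follows: commutativity and geometric simplicity from the first two steps, the Newton polygon data from the third, the vanishing of exceptional classes from the weighted-permutation-representation lookup, and the Tate conjecture for $A$ by combining that vanishing with Tate's theorem for divisors in codimension one. The main obstacle will be pinning down $\delta_A = 3$ exactly (as opposed to $4$, $5$, or $6$): establishing an \emph{exact} rank for a finitely-generated multiplicative subgroup requires certifying that no further unexpected relations hold, which is delicate but manageable via the explicit $S$-unit calculation in $\OO_L$ and a check that the Galois action on the associated relation lattice produces no additional invariants.
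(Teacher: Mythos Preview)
Your approach is correct but differs from the paper's in how much of the paper's own machinery it uses. The paper's proof is essentially ``by construction'': the polynomial $P_A(T)$ was produced by running \Cref{alg:wpr-to-weil} on an explicitly chosen weighted permutation representation $\rho = (w, G)$ (with $G \cong A_4 \times C_2$ acting as \texttt{12T6} and $w$ the weight function for $[0,0,0,\tfrac{1}{3},\tfrac{1}{3},\tfrac{1}{3},\tfrac{2}{3},\tfrac{2}{3},\tfrac{2}{3},1,1,1]$), so $\rho$ is known \emph{a priori} rather than recomputed from $P_A$. With $\rho$ in hand, the paper invokes \Cref{prop:dimension-of-return} for the dimension and commutativity, \Cref{lemma:geom-irred} (a stabilizer comparison $\Stab_G(\Phi_\rho(\decor{1})) = \Stab_G(\decor{1})$) for geometric simplicity, \Cref{lemma:ar-lemma} (the linear-algebra identity $\delta_A = \rank\Phi_\rho - 1$) for the angle rank, and \Cref{thm:main-tate-thm} directly (checking that $\rho$ is not exceptional in the sense of \Cref{defn:exceptional-wpr}) rather than the table lookup of \Cref{thm:exceptions}. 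Your route---recovering $\rho$ from $P_A$, testing geometric simplicity by ruling out roots of unity among the $\pi_i/\pi_j$, and certifying $\delta_A = 3$ via an $S$-unit computation in $\OO_L$---is valid and more self-contained, but heavier: the step you flag as delicate (pinning down the angle rank exactly) evaporates in the paper's framework, since $\rank\Phi_\rho$ is a finite linear-algebra calculation on a $24 \times 12$ rational matrix determined by $\rho$ alone, with no arithmetic in $\OO_L$ required.
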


To the best of our knowledge, the example in \Cref{coro:angle-corank-3} is the first geometrically simple abelian variety with commutative endomorphism algebra and angle co-rank $g - \delta_A > 1$ for which the Tate conjecture is known. We discuss this example further in \Cref{sec:an-explicit-example}.

Many interesting criteria for an abelian variety to have maximal angle rank $\delta_A = g$ are given in \cite{DupuyKedlayaZureick-Brown22}. Our techniques allow us to refine some of these results, especially in the case when $A$ is geometrically simple. For example, we prove the following theorem, refining a result of Tankeev~\cite{Tankeev1984} (a further generalization is given in \Cref{thm:ord-tankeev}).

\begin{theorem}[Refined Tankeev's Theorem]
  \label{thm:ord-tankeev}
  Let $A$ be a simple abelian variety of prime dimension $g$ with commutative endomorphism algebra and suppose that $A$ is not supersingular. Then the angle rank of $A$ is either $1$, $g-1$, or $g$. Moreover:
  \begin{enumerate}[label=(\roman*)]
  \item \label{i:delta1-geom-simple}
    $\delta_A = 1$ if and only if $A$ is not geometrically simple, and
  \item \label{i:ar-max}
    if all the slopes of the Newton polygon of $A$ lie in $\ZZ_{(2)}$ (e.g., if $A$ is ordinary), then $\delta_A \in \{1, g\}$.
  \end{enumerate}
  In particular if $A$ is geometrically simple and all slopes of $A$ lie in $\ZZ_{(2)}$ then the Tate conjecture holds for all powers of $A$.
\end{theorem}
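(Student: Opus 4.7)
The plan is to realize $\delta_A$ as the codimension of a $G_A$-subrepresentation $\Lambda$ of a $g$-dimensional representation $W^-$ of $G_A$, and then to use the primality of $g$ to classify the possible dimensions of $\Lambda$. Concretely, let $W^- \subset \QQ[\cR_A]$ be the $(-1)$-eigenspace of the CM involution $c \colon \pi \mapsto q/\pi$, with $\QQ$-basis $f_i = [\pi_i] - [\bar\pi_i]$ indexed by the $g$ conjugate pairs. The action of $G_A$ on $\cR_A$ turns $W^-$ into a $G_A$-representation via signed permutations (with $c$ acting as $-1$), and the $G_A$-equivariant map $L \colon W^- \to \agrp{A} \otimes \QQ$ sending $f_i$ to $2[\pi_i/\sqrt{q}]$ has image of $\QQ$-dimension equal to $\delta_A$. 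Setting $\Lambda \colonequals \ker L$, we have $\dim \Lambda = g - \delta_A$, and $\Lambda$ is a $G_A$-subrepresentation of $W^-$.

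To classify $\dim \Lambda$, I would exploit that simplicity of $A$ forces $G_A^+$ to act transitively on the $g$ conjugate pairs, and the primality of $g$ then forces $G_A^+$ either to embed in $\mathrm{AGL}(1,g)$ or to contain $A_g$. In both cases the permutation representation $\QQ^g$ of $G_A^+$ splits as the trivial representation together with a $\QQ$-irreducible representation of dimension $g - 1$ (absolutely irreducible in the $A_g$ case, $\QQ$-irreducible in the solvable case because the Galois action on the characters of a cyclic group of prime order fuses all nontrivial characters into one orbit). To transfer this to $W^-$ I would analyze the normal subgroup $N \colonequals G_A \cap C_2^g$: if $N = \langle c \rangle$, then $c$ acts as $-1$ while cosets of $\langle c \rangle$ map to well-defined elements of $G_A^+$, so $G_A$-stable subspaces of $W^-$ correspond to $G_A^+$-stable subspaces of $\QQ^g$, giving dimensions in $\{0,1,g-1,g\}$; if $N \supsetneq \langle c \rangle$, then the characters by which $N$ acts on the basis vectors $f_i$ are mutually distinct and transitively permuted by $G_A^+$, and since a nontrivial $G_A^+$-invariant partition of a prime-sized transitive set is impossible, $W^-$ is $G_A$-irreducible. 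In either case $\dim \Lambda \in \{0,1,g-1,g\}$; the non-supersingularity hypothesis rules out $\dim \Lambda = g$, yielding $\delta_A \in \{1, g-1, g\}$.

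For (i), if $A$ is not geometrically simple then the primality of $g$ forces $A_{\Fqbar} \sim E^g$ for some non-supersingular elliptic curve $E$, and a direct inspection of the Frobenius eigenvalues shows that $\agrp{A}$ is generated modulo torsion by $\pi_E/\sqrt{q^m}$ for $m$ the extension degree over which $A$ splits, so $\delta_A = 1$. Conversely, if $\delta_A = 1$ then $\Lambda$ is the codimension-one ``standard'' subrepresentation of $W^-$, forcing all normalized eigenvalues to take the shape $\alpha_i = \zeta_i \alpha^{\pm n}$ for a single $\alpha \in \Qbar^\times$, roots of unity $\zeta_i$, and a fixed integer $n$; examining the multiset of eigenvalues of $\Frob^N$ for sufficiently divisible $N$ then shows that $A_{\Fq^N}$ becomes isogenous to a $g$-th power of an elliptic curve, whence $A$ is not geometrically simple.

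For (ii), assume all slopes $s_i$ of $A$ lie in $\ZZ_{(2)}$ and suppose for contradiction that $\dim \Lambda = 1$; then by transitivity we may take $\Lambda = \QQ \cdot v$ with $v = \sum_i \epsilon_i f_i$ for some $\epsilon_i \in \{\pm 1\}$, giving a multiplicative relation $\prod_i (\pi_i/\sqrt{q})^{\epsilon_i} = \zeta$ for a root of unity $\zeta$. Taking $p$-adic valuations yields
\[
  \sum_i \epsilon_i s_i \;=\; \tfrac{1}{2} \sum_i \epsilon_i \;=\; \tfrac{n_+ - n_-}{2},
\]
where $n_\pm = \lvert \{i : \epsilon_i = \pm 1\} \rvert$. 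The left-hand side lies in $\ZZ_{(2)}$ by assumption, while (for $g$ an odd prime) $n_+ + n_- = g$ is odd, forcing $n_+ - n_-$ odd and the right-hand side to have $2$-adic valuation $-1$, a contradiction. Hence $\dim \Lambda \neq 1$ (the case $g = 2$ being vacuous from the classification in the second paragraph), so $\delta_A \in \{1, g\}$. Combining with (i) yields that geometric simplicity plus slopes in $\ZZ_{(2)}$ forces $\delta_A = g$, at which point Zarhin's theorem \cite[Theorem~3.4.3]{Zarhin94} (recalled in \Cref{sec:angle-ranks-abelian}) together with Tate's theorem for divisors \cite{Tate1966} yields the Tate conjecture for all powers of $A$. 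The main obstacle is the subrepresentation classification step: the signed-permutation structure of $W^-$ must be handled carefully, and in particular the case distinction $N = \langle c \rangle$ versus $N \supsetneq \langle c \rangle$ requires a careful analysis of how $G_A^+$-invariant partitions of $g$ prime points can arise.
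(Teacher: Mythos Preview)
Your strategy coincides with the paper's: both realize $g - \delta_A$ as the dimension of the kernel $\Lambda$ of a $G_A$-equivariant map out of a $g$-dimensional representation, use the primality of $g$ to force $\dim\Lambda \in \{0,1,g-1,g\}$, and prove (ii) by the same $2$-adic valuation contradiction on a $\pm 1$-vector. The paper packages this inside the more general \Cref{prop:refined-gen-tankeev} via the level-set partition of \cite{DupuyKedlayaZureick-Brown22}; your case $N \supsetneq \langle c \rangle$ is exactly their case $m = g$ (handled by citing \cite[Corollary~3.12]{DupuyKedlayaZureick-Brown22}), and $N = \langle c \rangle$ is their $m = 1$. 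For (i) you argue directly via $A_{\overbar{\FF}_q} \sim E^g$, whereas the paper uses the criterion $\Stab_G(\Phi_\rho(\1)) = \Stab_G(\1)$ of \Cref{lemma:geom-irred}; both routes work.

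Two steps need repair. First, your dichotomy ``$G_A^+ \hookrightarrow \mathrm{AGL}(1,g)$ or $G_A^+ \supseteq A_g$'' is false for transitive groups of prime degree (e.g.\ $M_{11}$ or $\mathrm{PSL}_2(\FF_{11})$ on $11$ points); Burnside's theorem gives instead ``$2$-transitive or $\subseteq \mathrm{AGL}(1,g)$'', which still yields the decomposition of $\QQ^g$ you want. Second, and more seriously, in the case $N = \langle c \rangle$ your claim that $G_A$-stable subspaces of $W^-$ correspond to $G_A^+$-stable subspaces of the permutation module $\QQ^g$ is not correct as stated: the $G_A$-action on $W^-$ is by \emph{signed} permutations, and quotienting by $c = -1$ gives only a projective $G_A^+$-representation, not the linear permutation representation (e.g.\ when $G_A \cong C_{2g}$ is generated by an element whose $g$-th power is $c$). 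The conclusion $\dim\Lambda \in \{0,1,g-1,g\}$ is nonetheless correct and follows more cheaply --- this is essentially the paper's route --- by restricting to a single element $\sigma \in G_A$ of order $g$ mapping to a $g$-cycle in $G_A^+$ (square a lift if necessary, using that $g$ is odd): the characteristic polynomial of $\sigma$ on $W^-$ divides $x^g - 1$ and has degree $g$, hence equals $x^g - 1$, so $W^-|_{\langle\sigma\rangle} \cong \QQ[C_g] = \QQ \oplus \QQ(\zeta_g)$ and every $\langle\sigma\rangle$-stable subspace already has dimension in $\{0,1,g-1,g\}$. This bypasses both issues at once.
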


Another application of our results is the following. It was conjectured by Ahmadi--Shparlinski \cite[Section~5]{AhmadiShparlinsky10} that every geometrically simple ordinary Jacobian has maximal angle rank, after which counterexamples were provided in dimension $4$ in \cite{DupuyKedlayaRoeVincent21}. In spite of this, \Cref{thm:ord-tankeev} shows that the conjecture of Ahmadi--Shparlinksi holds for geometrically simple abelian varieties of prime dimension. 

\begin{question}
    Let $g$ be a positive composite integer, does there exist a geometrically simple ordinary abelian variety of dimension $g$ with angle rank less than $g$?
\end{question}

\subsection{Reconstruction theorems}
\label{sec:reconstr-algor}

In \Cref{sec:admissible}, we give necessary criteria for a geometrically simple abelian variety over $\Fq$ to have a certain Galois group and Newton polygon. We conjecture that these criteria are the only obstructions to the existence of such an abelian variety. 

We make progress towards this conjecture in the following way. Following \cite[Section~3.1]{us}, in \Cref{def:wpr} we associate to a given Galois group and Newton polygon a weighted permutation representation.We give \Cref{alg:wpr-to-weil}, which takes as input a suitable weighted permutation representation and an appropriate number field and outputs a $q$-Weil number realizing this data. This algorithm is practical for small $g$, and allows us to exhibit the abelian varieties in \Cref{thm:exceptions} and \Cref{coro:angle-corank-3}.

We prove that the existence of such an appropriate number field is equivalent to a ``strong inverse Galois problem'' for CM fields in which we impose local conditions at $p$. Thus, by running \Cref{alg:wpr-to-weil}, we prove in \Cref{prop:ord-igp} that the existence of a prime power $q = p^k$ and a geometrically simple abelian variety $A/\Fq$ with commutative endomorphism algebra and given weighted permutation representation is equivalent to this strong inverse Galois problem. This shows that previous conjectures (\cite[Conjecture~2.7]{DupuyKedlayaRoeVincent22} and \cite[Conjecture~7.1]{us}) follow, in the geometrically simple case, from a standard conjecture in algebraic number theory.

We employ the correctness of \Cref{alg:wpr-to-weil} (which we prove in \Cref{thm:alg-termination}) to deduce the following results.

\begin{theorem}
  \label{thm:CM-q-weil}
  Let $K/\QQ$ be a CM number field of degree $2g$ where $g$ is a prime number. The following two statements are equivalent:
  \begin{enumerate}[label=(\roman*)]
      \item \label{i:CM-q-weil-statement1} 
      there exists a prime power $q=p^k$ and an abelian variety $A/\Fq$ with commutative endomorphism algebra $\End^0(A) \cong K$, and
      \item either $K$ is not a bi-quadratic field, or $K$ is a bi-quadratic field containing $\QQ(i)$ or $\QQ(\zeta_3)$.
  \end{enumerate}
  Moreover, the abelian variety $A$ in \ref{i:CM-q-weil-statement1} may be taken to be ordinary, and can be taken to be geometrically simple if $K$ is not a bi-quadratic field. Conversely, if $K$ is a bi-quadratic field containing $\QQ(i)$ or $\QQ(\zeta_3)$, then any such $A$ is not geometrically simple.
\end{theorem}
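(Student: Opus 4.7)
The plan is to combine Honda--Tate theory with the algorithmic framework of \Cref{sec:reconstr-algor} in the constructive direction, and a Kronecker-type roots-of-unity argument in the biquadratic case.

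\textbf{Reformulation via Honda--Tate.} By Honda--Tate theory, statement \ref{i:CM-q-weil-statement1} is equivalent to the existence of a $q$-Weil number $\pi \in K$ with $\QQ(\pi) = K$ and vanishing local Brauer invariants at every prime of $K$ above $p$. To force $A$ to be ordinary, I would further require each prime of $K^+$ above $p$ to split in $K$ and $v(\pi)/v(q) \in \{0, 1\}$ for every $v \mid p$, a condition which automatically makes the local invariants trivial.

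\textbf{Construction of $\pi$.} When $g$ is an odd prime, or $g = 2$ with $K$ non-biquadratic (so $\Gal(\widetilde{K}/\QQ) \in \{C_4, D_4\}$), I would pick a rational prime $p$ splitting completely in the Galois closure $\widetilde{K}$ (Chebotarev) and run \Cref{alg:wpr-to-weil} with input $(K, p)$ and the ordinary weighted permutation representation for a generic choice of CM type $\Phi$. By \Cref{thm:alg-termination}, this yields a primitive ordinary $q$-Weil number $\pi \in K$ for $q = p^k$ with $k$ sufficiently large. Geometric simplicity of the resulting abelian variety follows because, for $g$ prime, the lattice of subfields of $K$ is sparse enough that a generic $\Phi$ avoids stabilisation under any nontrivial subgroup of $\Gal(\widetilde{K}/\QQ)$; concretely, \Cref{prop:AV-is-admissible} and the reasoning behind \Cref{prop:ord-igp} rule out the relevant intermediate-field obstructions. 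When $K$ is biquadratic and contains $\zeta \in \{i, \zeta_3\}$ so that $K = \QQ(\zeta, \sqrt{D})$, I would instead exhibit $\pi$ directly as $\pi = b\sqrt{D}$ with $b \in \QQ(\zeta)$ satisfying $D \cdot N_{\QQ(\zeta)/\QQ}(b) = q$; existence of such a $b$ reduces to an elementary norm equation, solvable upon choosing $p$ with appropriate splitting in $\QQ(\zeta)$.

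\textbf{Non-geometric simplicity in the biquadratic case.} Let $K$ be biquadratic containing $\zeta \in \{i, \zeta_3\}$, set $K_1 = \QQ(\zeta)$, and let $\sigma$ generate $\Gal(K/K_1)$. I claim $\sigma(\pi)/\pi$ must lie in $\mu(K)$: indeed $\sigma$ preserves the ideal $(\pi)$ (since its factorisation is determined by $\pi\bar\pi = q$ together with the splitting of $p$ in $K/K_1$), so $\sigma(\pi)/\pi$ is a unit in $\OO_K$ of absolute value one at every complex embedding, and hence is a root of unity by Kronecker's theorem. Since $\sigma^2 = 1$, we have $(\sigma(\pi)/\pi)^2 = 1$, so $\sigma(\pi)/\pi = \pm 1$. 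Primitivity of $\pi$ over $K_1$ rules out $+1$, so $\pi$ lies in the $(-1)$-eigenspace of $\sigma$, which is $K_1 \cdot \sqrt{D}$, and $\pi^2 \in K_1 \subsetneq K$. Consequently $A \otimes \FF_{q^2}$ corresponds to the Weil number $\pi^2$ in the proper subfield $K_1$, is non-simple, and so $A$ is not geometrically simple.

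\textbf{Obstruction when $\mu(K) = \{\pm 1\}$ (main difficulty).} The remaining direction, that a biquadratic $K$ with $\mu(K) = \{\pm 1\}$ admits no primitive $q$-Weil number satisfying the local conditions, is the main obstacle. I would parametrise primitive $\pi \in K$ via the norm equation $t^2 + d\beta^2 = 4q$ in $K^+ = \QQ(\sqrt{D})$, writing $\pi = (t + \beta\sqrt{-d})/2$, and analyse the Galois action of the two nontrivial automorphisms $\sigma_1, \sigma_2 \in \Gal(K/\QQ)$ other than complex conjugation. Whenever some $\sigma_i$ stabilises $(\pi)$, Kronecker's theorem combined with $\mu(K) = \{\pm 1\}$ forces $\sigma_i(\pi)/\pi \in \{\pm 1\}$ and pins $\pi$ into a proper subfield, contradicting primitivity; when $\sigma_i$ does not stabilise $(\pi)$, I would show that the resulting asymmetric prime decomposition of $(\pi)$ forces $p$ to ramify in one of the imaginary quadratic subfields $K_i$, and that this ramification, combined with the absence of nontrivial roots of unity for adjusting $\pi$ by a unit, produces a nontrivial local Brauer invariant, contradicting commutativity of $\End^0(A)$. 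The case analysis is delicate, and its success ultimately rests on the classical fact that the only biquadratic CM fields of degree four with $\mu(K) \supsetneq \{\pm 1\}$ are $\QQ(\zeta_8)$ and $\QQ(\zeta_{12})$, both of which contain $\QQ(i)$, and $\QQ(\zeta_{12})$ additionally contains $\QQ(\zeta_3)$.
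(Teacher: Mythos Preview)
Your constructive direction in the non-biquadratic case has the right shape (Chebotarev plus \Cref{alg:wpr-to-weil}), but the justification for geometric simplicity is the weak point. Citing \Cref{prop:AV-is-admissible} and \Cref{prop:ord-igp} does not help: admissibility is a \emph{necessary} condition, not a sufficient one for geometric simplicity. What the paper actually uses is \Cref{prop:refined-gen-tankeev}: one chooses the embedding $G \hookrightarrow W_{2g}$ so that the ordinary weighted permutation representation has angle rank $>1$, and then the refined Tankeev argument forces $\delta_\rho = g$, which in turn implies $\rho$ is geometrically simple. This is the missing ingredient you need before feeding $\rho$ into the algorithm.

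In the biquadratic case your argument has two genuine gaps. First, the assertion that $\sigma \in \Gal(K/K_1)$ preserves the ideal $(\pi)$ is not a consequence of ``$\pi\bar\pi = q$ together with the splitting of $p$ in $K/K_1$'': that data constrains $(\pi)(\bar\pi)$, not $(\pi)$ itself, and $\sigma$ may genuinely permute the primes in the support of $(\pi)$. This claim is essentially what you are trying to prove, so the reasoning is circular. Second, the implication ``$\sigma^2 = 1 \Rightarrow (\sigma(\pi)/\pi)^2 = 1$'' is false: writing $u = \sigma(\pi)/\pi$, applying $\sigma$ gives only $\sigma(u) = u^{-1}$, and $u$ need not lie in the $\sigma$-fixed field $K_1$ (e.g.\ when $K = \QQ(\zeta_8)$ or $\QQ(\zeta_{12})$ the group $\mu(K)$ is not contained in any single quadratic subfield). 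Finally, your proposed resolution of the ``main difficulty'' via a nontrivial local Brauer invariant cannot work as stated: commutativity of $\End^0(A)$ is a \emph{hypothesis}, so the invariants are zero by assumption and there is nothing to contradict.

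The paper bypasses all of this with a single observation: for every transitive $G \cong C_2 \times C_2$ in $W_4$ and every admissible weighting, one checks directly from \Cref{defn:Phi} that $\Phi_\rho$ fails to separate the elements of $X_4$, so $\rho$ is not geometrically simple. Via \Cref{lemma:geom-irred} this immediately gives non-geometric-simplicity whenever $A$ exists, and simultaneously furnishes a conjugate $\pi'$ with $(\pi) = (\pi')$; Kronecker then gives $\pi/\pi' \in \mu(K)$, and when $\mu(K) = \{\pm 1\}$ one gets $\pi = -\pi'$, so $P_A(T)$ is even and $\pi^2$ lies in a proper subfield, yielding the obstruction. This single group-theoretic computation replaces your entire biquadratic case analysis.
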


\begin{theorem}
  \label{thm:gen-CM-q-Weil}
  Let $K/\QQ$ be a CM number field of degree $2g$. Then there exists a prime power $q = p^k$ and a geometrically simple abelian variety $A/\Fq$ such that the center of $\End^0(A)$ is isomorphic to $K$. Moreover $A$ may be chosen so that $\dim A \in \{g, 2g\}$.
\end{theorem}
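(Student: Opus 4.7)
The plan is to invoke Honda--Tate theory together with \Cref{alg:wpr-to-weil}, handling two cases depending on whether the Brauer class of the endomorphism algebra can be arranged to be trivial or of order $2$. Recall that for a $q$-Weil number $\pi$ with $\QQ(\pi) = K_0$, the associated simple abelian variety $A/\Fq$ has $Z(\End^0(A)) \cong K_0$ and $\dim A = e \cdot [K_0 : \QQ]/2$, where $e$ is the lcm of the denominators of the local invariants $v(\pi) [K_v : \QQ_p] / v(q) \pmod{\ZZ}$ at the primes $v$ of $K_0$ above $p$. Thus, to prove the theorem, it suffices to find $\pi \in K$ with $\QQ(\pi) = K$, $e \in \{1, 2\}$, and $\QQ(\pi^n) = K$ for all $n \geq 1$ (the last condition ensuring geometric simplicity).

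First, I attempt the $e = 1$ case, which yields $\dim A = g$. Using \Cref{alg:wpr-to-weil} and \Cref{prop:ord-igp}, a suitable $\pi$ exists whenever we can equip $K$ with an admissible weighted permutation representation having trivial local invariants and realizing a geometrically simple abelian variety with commutative endomorphism algebra $K$. For a large class of CM fields $K$ (including every non-exceptional case in the prime degree situation of \Cref{thm:CM-q-weil}), this step succeeds directly.

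For the remaining CM fields $K$ where the $e = 1$ construction is obstructed, I pass to the $e = 2$ case and produce $\pi \in K$ generating $K$ whose local invariants at the primes above $p$ all lie in $\frac{1}{2}\ZZ / \ZZ$ and sum to zero in $\QQ/\ZZ$. This requires a variant of \Cref{alg:wpr-to-weil} which allows non-trivial half-integer local invariants, combined with choosing the rational prime $p$ via a Chebotarev-style density argument so that its splitting type in $K$ accommodates the required half-integer slopes. The resulting simple abelian variety $A$ has dimension $2g$ and a quaternion division algebra as its endomorphism algebra with center $K$.

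The main obstacle is the $e = 2$ construction: showing that the obstructions preventing the $e = 1$ case do not also preclude the existence of a $\pi$ realizing $e = 2$ together with geometric simplicity. I expect this to reduce to a Grunwald--Wang style statement, asserting that the prescribed half-integer local invariants at primes above $p$ can be realized by an element $\pi \in K$ with $\pi \overline{\pi} = q$ generating $K$ over $\QQ$. Once such a $\pi$ exists, geometric simplicity ($\QQ(\pi^n) = K$ for all $n$) follows from the same avoidance argument as in the correctness proof of \Cref{alg:wpr-to-weil}: the set of $\pi$ satisfying nontrivial multiplicative relations among Galois conjugates cuts out a proper Zariski-closed locus, which we can step around by a small perturbation of the chosen Weil number.
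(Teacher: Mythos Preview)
Your proposal has a genuine gap and misses the paper's key simplifying idea. You split into an $e=1$ case and an $e=2$ case, admit that the $e=1$ case fails for some CM fields, and then for the $e=2$ case only say you ``expect'' a Grunwald--Wang style argument to go through and that geometric simplicity should follow from a vague perturbation/Zariski-closure argument. None of the $e=2$ construction is actually carried out, so as written this is not a proof.

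The paper avoids all of this case analysis with one clean move: it fixes in advance the ``K3-type'' weight function $w$ (Newton polygon $[0,\tfrac{1}{2},\dots,\tfrac{1}{2},1]$) and observes via \Cref{lemma:NP-lemma} that since the horizontal lengths $1,\,2g-2,\,1$ have $\gcd$ equal to $1$, the weighted permutation representation $\rho=(w,G)$ is geometrically simple for \emph{every} transitive $G\subseteq W_{2g}$. Then one simply takes $p$ totally split in $L$ (Chebotarev), so the decomposition group is trivial and $\rho$ is automatically weakly $p$-admissible; running \Cref{alg:wpr-to-weil} gives $\pi$, and since $w(X_{2g})=\{0,\tfrac12,1\}$, \Cref{prop:dimension-of-return} forces $\dim A\in\{g,2g\}$. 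Geometric simplicity is then immediate from the same argument as in \Cref{lemma:geom-irred}, because $\rho$ was chosen geometrically simple from the start --- no perturbation needed. The point you are missing is that you can adjust the \emph{weight function} (rather than the field or the prime) to force geometric simplicity uniformly, at the acceptable cost of possibly losing commutativity of $\End^0(A)$ and landing in dimension $2g$.
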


Ideas similar to those in \Cref{alg:wpr-to-weil} have previously appeared in the literature. For example, see the work of Lenstra--Oort \cite[Section 3]{lenstra-oort} and Zarhin~\cite[Example~1.1.2 and Section~4]{Zarhin94}, \cite[Section~7]{Zarhin15} (the example in \cite{Zarhin94} is attributed to Lenstra). We are grateful to Everett Howe for communicating to us a preliminary version of this algorithm. In the \texttt{GitHub} repository~\cite{OurElectronic} we provide a reference implementation for \Cref{alg:wpr-to-weil} in \texttt{Magma}. 

\subsection{Outline of the paper}
\label{sec:outline-paper}
In \Cref{sec:weil-numbers}, we set up the language which we use to discuss $q$-Weil numbers throughout this paper. In particular, following \cite[Section~3]{us} we associate to every simple abelian variety $A/\Fq$ with commutative endomorphism algebra a \emph{weighted permutation representation} which encodes the Galois group of $A$ and its action on the Newton polygon of $A$. We prove a useful group theoretic formula which computes the prime factorization of the principal ideal generated by a $q$-Weil number (see \Cref{prop:div-infl}). We also introduce a number of important properties of weighted permutation representations such as \emph{admissibility}.

\Cref{sec:angle-ranks-geom} is devoted to proving \Cref{thm:ord-tankeev} and its generalization, \Cref{prop:refined-gen-tankeev}. To achieve this we show that knowing the weighted permutation representation associated to $A$ is sufficient to determine whether $A$ is geometrically simple (see \Cref{lemma:geom-irred}).

In \Cref{sec:an-expl-crit} we state \Cref{thm:main-tate-thm}. This gives an explicit criterion, in terms of the weighted permutation representation, for when a geometrically simple abelian variety with commutative endomorphism algebra admits
an exceptional Tate class. In \Cref{sec:applications-of-thm} we use \Cref{thm:main-tate-thm} to prove \Cref{thm:others-coros}. Building on techniques of Tankeev~\cite{Tankeev1984} and Zarhin~\cite{Zarhin91} we prove \Cref{thm:main-tate-thm} in \Cref{sec:proof-tate-thm}.

Finally in \Cref{sec:reconstr-algo,sec:inverse-galo-probl}, we study the reconstruction problems addressed by \Cref{thm:CM-q-weil,thm:gen-CM-q-Weil}. To this end, we provide \Cref{alg:wpr-to-weil}, which, given an appropriate number field and weighted permutation representation, outputs a $q$-Weil number with that weighted permutation representation. Employing the correctness and termination properties of this algorithm (which we prove in \Cref{thm:alg-termination}) we deduce \Cref{thm:CM-q-weil,thm:gen-CM-q-Weil}. We prove \Cref{thm:exceptions} by combining \Cref{thm:main-tate-thm,thm:alg-termination}.

\subsection{Notation and conventions}
\label{sec:notation}
Throughout this article we assume that $A/\Fq$ is a simple abelian variety. We write $\pi_A \colon A \to A$ for the $q$-Frobenius endomorphism of $A$. We write $K_A = \QQ(\pi_A) \subset \End^0(A)$ for the center of the endomorphism algebra $\End^0(A)$. Let $P_A(T) \in \ZZ[T]$ be the characteristic polynomial of $\pi_A$ acting on the $\ell$-adic Tate module $T_{\ell}(A) \colonequals \underset{\longleftarrow}{\lim} \;A[\ell^n]$. 

Let $v \colon \QQ^\times \to \ZZ$ be the $p$-adic valuation normalized so that $v(q) = 1$. We define the \cdef{$q$-Newton polygon} of $A$ to be the lower convex hull of the points $(i,v(a_i))$ where $P_A(T) = \sum_{i=0}^{2g} a_{i} T^{2g-i}$. By the \cdef{length} of a segment of a Newton polygon we mean the length of the segment when projected onto the horizontal axis.

\begin{defn}
  \label{def:assumption} \hypertarget{assumption-star}{}
  We say that an abelian variety $A$ as above satisfies \cdef{{\cond}} if the following conditions hold:
  \begin{enumerate}[label=(\Alph*)]
  \item \label{item:simple}
    $A$ is simple,
  \item \label{item:commutative-End}
    $\End^0(A) = K_A$ is commutative, and
  \item \label{item:K-CM}
    $K_A$ is a CM field.
  \end{enumerate}
\end{defn}

The conditions of {\cond}, together with the Honda--Tate theorem, imply that $P_A(T)$ is irreducible and that $K_A$ is a field of degree $2g$ with no real embeddings. Further, note that any simple abelian variety of dimension $g \geq 2$ with commutative endomorphism algebra satisfies \ref{item:K-CM} (and thus {\cond}). 

Let $A$ be an abelian variety satisfying {\cond}. Fix an algebraic closure $\Qbar$ of the field $K_A = \QQ(\pi_A)$ and let $L = L_A$ denote the Galois closure of $K_A$ in $\Qbar$. We define the \cdef{Frobenius eigenvalues of $A$} to be the roots of $P_A(T)$ in $L$ and write
\begin{equation}
  \label{eq:RA}
  \cR_A \colonequals \brk{\pi \in L : P_A(\pi) = 0}
\end{equation}
for the set of Frobenius eigenvalues. Note that since $K_A$ is a CM field, the elements of $\cR_A$ arise in complex conjugate pairs. That is, if $\pi \in \cR_A$ then its complex conjugate $\bar\pi = q/\pi$ is also contained in $\cR_A$.  We write $G = G_A$ for the Galois group of $L/\QQ$.
We define $K_A^+ = \QQ(\pi + \overbar\pi) \subset K_A$ to be the maximal totally real subfield of $K_A$ and write $G_A^+$ for the Galois group of the Galois closure of $K_A^+$ inside $L$.

For a number field $F/\QQ$ we write $\OO_F$ for the ring of integers of $F$ and let
\[
  \cP_F \colonequals \{ \mfP \in \Spec \OO_F : \mfP \mid p \}.
\]

If $G$ is a group and $\Omega$ is a $G$-set, we denote by $\QQ\ideal{\Omega}$ the free $\QQ$-module supported on $\Omega$, together with the induced $G$-action.

Finally recall that we say an element $\pi \in \overbar{\QQ}$ is a \cdef{$q$-Weil number} if the complex norm $| \iota(\pi) | = \sqrt{q}$ for every  embedding $\iota\colon \QQ(\pi) \hookrightarrow \CC$.

\subsection{Acknowledgments}
\label{sec:acknowledgements}
We are very grateful to Everett Howe for describing to us the outline of \Cref{alg:wpr-to-weil} and for several insightful correspondences. We also thank Melanie Matchett Wood and Julian Demeio for useful discussions and Taylor Dupuy and David Zureick-Brown for comments on an earlier version of this article. SF was funded by Céline Maistret's Royal Society Dorothy Hodgkin Fellowship.

\section{The Galois action on \texorpdfstring{$q$}{q}-Weil numbers}
\label{sec:weil-numbers} 
Let $A$ be a simple abelian variety. Fix a prime $\dpr \in \cP_L$ dividing $p$ and denote its decomposition group by $D$, its tame inertia group by $G_0$, and its wild inertia group by $G_1$.
We write $v_{\dpr}\colon L^\times \to \ZZ$ for the valuation corresponding to $\dpr$ (so that $x \in \dpr$ if and only if $v_{\dpr}(x) \geq 1$), and define $v$ to be its normalization
\begin{equation*}
    v(x) \colonequals \tfrac1k v_\dpr(x)
\end{equation*}
where $k \colonequals v_{\dpr}(q)$.

In this work, a significant role will be played by the factorization of the ideals $\pi \OO_L$ where $\pi$ ranges over the set $\cR_A$ of Frobenius eigenvalues. As we show in \Cref{prop:div-infl} this factorization allows us to describe many properties of $A$ using only its Galois group and Newton polygon, without direct reference to its Frobenius eigenvalues. To this end, we begin by recalling the divisor map defined in \cite[Section~3.3]{us}, which realizes the factorization of $\pi \OO_L$ as a morphism of $G$-representations.

\begin{defn}
  \label{def:divisor-map}
  Let $A$ be an abelian variety satisfying {\cond}. We define the \cdef{divisor map} $\div_A \colon \vecsp{\QQ}{\cR_A} \rightarrow \vecsp{\QQ}{\cP_L}$ to be the $\QQ$-linear map given by linearly extending the map ${\pi \mapsto \sum_{\mfP} a(\mfP) \mfP}$ where $\pi \in \cR_A$ and
  $\pi\cO_L = \prod_{\mfP} \mfP^{a(\mfP)}$ is the prime factorization of the ideal generated by $\pi$.   
\end{defn}

The following lemma is immediate from the construction.

\begin{lemma}
  \label{lemma:divA-G-map}
  The divisor map $\div_A$ is $G$-equivariant when $\cR_A$ and $\cP_L$ are equipped with the natural (left) $G$-actions.
\end{lemma}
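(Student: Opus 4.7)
The statement is essentially a compatibility between the Galois action on elements of $L$ and the Galois action on prime ideals of $\OO_L$, so the plan is to unpack the definitions directly.

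First, I would verify that the $G$-actions on the source and target are well-defined. Since $P_A(T) \in \QQ[T]$ and $G = \Gal(L/\QQ)$ fixes $\QQ$, for any $\sigma \in G$ and $\pi \in \cR_A$ we have $P_A(\sigma\pi) = \sigma(P_A(\pi)) = 0$, so $\sigma\pi \in \cR_A$. Similarly, any $\sigma \in G$ sends primes of $\OO_L$ above $p$ to primes of $\OO_L$ above $p$, so $\cP_L$ is $G$-stable. Each set thus inherits a natural (left) $G$-action, extending linearly to $\vecsp{\QQ}{\cR_A}$ and $\vecsp{\QQ}{\cP_L}$.

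Next, since $\div_A$ is $\QQ$-linear by construction, it suffices to check equivariance on the basis $\cR_A$. Fix $\sigma \in G$ and $\pi \in \cR_A$ with prime factorization $\pi \OO_L = \prod_{\mfP \in \cP_L} \mfP^{a(\mfP)}$. Applying $\sigma$ to this equality of ideals (which is legitimate because $\sigma$ is a ring automorphism of $\OO_L$ fixing $\ZZ$) yields
\[
  (\sigma\pi)\OO_L \;=\; \sigma\!\left(\prod_{\mfP} \mfP^{a(\mfP)}\right) \;=\; \prod_{\mfP} (\sigma\mfP)^{a(\mfP)}.
\]
Reading off the exponent of a prime $\mfP'$ on the right, we obtain
\[
  \div_A(\sigma\pi) \;=\; \sum_{\mfP \in \cP_L} a(\mfP)\,(\sigma\mfP) \;=\; \sigma \cdot \sum_{\mfP \in \cP_L} a(\mfP)\,\mfP \;=\; \sigma\cdot \div_A(\pi),
\]
which is the required equivariance.

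There is no real obstacle here; the content is entirely bookkeeping, and the statement is genuinely ``immediate from the construction'' as the authors remark. The only point worth flagging is that one uses that $\cR_A$ is $G$-stable (which depends on $P_A$ having rational coefficients, guaranteed by \cond) and that ideal factorization is compatible with ring automorphisms fixing $\ZZ$.
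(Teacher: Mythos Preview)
Your proof is correct and is precisely the direct verification that the paper has in mind when it says the lemma is ``immediate from the construction.'' The paper gives no further argument, and your unpacking of the definitions is exactly what is needed; the only minor quibble is that $P_A(T) \in \ZZ[T]$ holds for any abelian variety over $\Fq$, not just under {\cond}, so the $G$-stability of $\cR_A$ does not really depend on that assumption.
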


\subsection{The Galois action on the Frobenius eigenvalues}

Consider the set of symbols
\begin{equation}
  \label{eq:X2g}
  X_{2g} \colonequals \brk{\decor{1}, \dots, \decor{g},\bar{\decor{g}}, \dots, \bar{\decor{1}}}.
\end{equation}
We define $W_{2g} \subset \mathrm{Sym}(X_{2g})$ to be the stabilizer of the partition
\begin{equation*}
  X_{2g} = \brk{\decor{1},\bar{\decor{1}}} \sqcup \dots \sqcup \brk{\g,\bar\g}.
\end{equation*}
We recall the following definitions from \cite[Section~3.1]{us}.

\begin{defn}
  \label{def:indexing}
  An \cdef{indexing of the roots} is a bijection $\mathcal{I}\colon X_{2d} \to \cR_A$ which satisfies the following conditions:
  \begin{enumerate}[label=(\roman*)]
  \item
    $\mathcal{I}$ respects complex conjugation, i.e., $\mathcal{I}(\overline{\decor{i}}) = \overline{\mathcal{I}(\decor{i})}$ for each $\decor{1} \leq \decor{i} \leq \g$, and
  \item
    the indices climb the Newton polygon, i.e.,
    \[
      v(\mathcal{I}(\decor{i})) \leq v(\mathcal{I}(\decor{j})) \leq v(\mathcal{I}(\bar{\decor{j}})) \leq v(\mathcal{I}(\bar{\decor{i}}))
    \]
    for each pair of indices $\1 \leq \decor{i} \leq \decor{j} \leq \decor{g}$.
  \end{enumerate}
\end{defn}

\begin{defn}
\label{def:slope-function}
A \cdef{weight function} is a map $w \colon X_{2g} \rightarrow \QQ_{\geq 0}$ that corresponds to a $q$-Newton polygon. That is:
\begin{enumerate}[label=(\roman*)]
    \item $w(\decor{i}) \leq w(\decor{j})$ for every $\decor{1}\leq \decor{i} \leq \decor{j} \leq \g$,
    \item $w(\decor{i}) + w(\bar{\decor{i}}) = 1$ for every $\decor{1}\leq \decor{i} \leq \g$, and
    \item every slope $s \in w(X_{2g})$ occurs with multiplicity divisible by the denominator of $s$.
\end{enumerate}
\end{defn}

\begin{defn}
  \label{def:wpr}
  A \cdef{weighted permutation representation} is a pair $\rho = (w,G)$ where $w \colon X_{2g} \rightarrow \QQ_{\geq 0}$ is a weight function and $G \subseteq W_{2g}$ is a transitive subgroup containing the complex conjugation element $(\1\bar{\1})\dots(\g\bar{\g})$. We say that $\rho$ is \cdef{supersingular} if $w(X_{2g}) = \{ 1/2 \}$ and \cdef{ordinary} if $w(X_{2g}) = \{0, 1\}$.
\end{defn}

\begin{remark}
  The weighted permutation representation (as defined in \Cref{def:wpr}) differs slightly from that defined in \cite[Section~3]{us} where the weight function is allowed to be an arbitrary map $w \colon X_{2g} \to \QQ_{\geq 0}$ and $G$ is not required to be transitive. The reader is also encouraged to compare our weighted permutation representation with the \emph{Newton hyperplane representation} considered by Dupuy--Kedlaya--Zureick-Brown~\cite{DupuyKedlayaZureick-Brown22}.
\end{remark}

Let $A$ be an abelian variety satisfying {\cond}, let $G = G_A$, and choose an indexing $\mathcal{I} \colon X_{2g} \to \cR_A$. Write $\pi_{\decor{i}} = \mathcal{I}(\decor{i})$ and $\bar\pi_{\i} = \mathcal{I}(\bar \i)$. Choose as usual, a prime $\dpr$ above $p$ in $L$. The (normalized $\dpr$-adic) valuation $v\colon L^\times \to \QQ$ gives a weight function $w = v \circ \mathcal{I}$ and the identification $\mathcal{I}$ makes $X_{2g}$ into a $G$-set, thus inducing an inclusion $G \hookrightarrow W_{2g} \subset \operatorname{Sym}(X_{2g})$. We therefore obtain a weighted permutation representation $\rho = (w, G)$ associated to $A$ (and $\mathcal{I}$). This identification also yields an embedding $G^+ \hookrightarrow \operatorname{Sym}(\{\1, \dots \g \})$.

Note that a different choice of indexing $\mathcal{I}' \colon X_{2g} \to \cR_A$ yields a different inclusion $G \hookrightarrow W_{2g}$, whose image is conjugate by an element of the stabilizer of the weight function $w$, i.e., the subgroup $\Stab(w) = \{ \sigma \in W_{2g} : w(\sigma(x)) = w(x) \text{ for all } x \in X_{2g}\}$. In particular the weighted permutation representation associated to $A$ is well defined up to $\Stab(w)$-conjugation of $G$.

\subsection{A necessary condition for $\rho$ to arise from an abelian variety}
\label{sec:admissible}
We now show that if a weighted permutation representation arises from an abelian variety, it must satisfy some constraints arising from the factorization of $p$ in $L$. In our previous work~\cite[Section~4--6]{us} we exploited similar restrictions to rule out certain weighted permutation representations in dimensions $1$, $2$, and $3$. Recall that we have fixed a prime ideal $\dpr \in \cP_L$ and thus a normalized valuation $v \colon L^\times \to \QQ$. We write $D$, $G_0$, and $G_1$ for the decomposition, tame inertia, and wild inertia groups of $\dpr$ respectively. 

\begin{lemma}
  \label{prop:decomp-group}
  Let $A$ be an abelian variety satisfying {\cond}. Let $G \supseteq D \supseteq G_0 \supseteq G_1$ be as above. Then there exists a surjection $\Gal(\overbar{\QQ}_p/\QQ_p) \to D$ such that the image of the tame inertia subgroup is $G_0$ and the image of the wild inertia subgroup is $G_1$. Moreover:
  \begin{enumerate}[label=(\roman*)]
  \item \label{i:NP-stab} 
    for all $\pi \in \cR_A$ and for all $\sigma \in D$, we have $v(\pi) = v(\sigma\pi)$, and
  \item \label{i:vertical-length} 
    for every $\pi \in \cR_A$, we have $\sum_{\pi' \in D\pi}v(\pi') \in \ZZ$.
  \end{enumerate}  
\end{lemma}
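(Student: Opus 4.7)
\emph{Plan.} The proof has three components: constructing the surjection from the Galois theory of local fields, verifying (i) directly from the defining property of the decomposition group, and reducing (ii) to the Honda--Tate classification of isogeny classes. The main obstacle will be (ii), which crucially uses the commutativity of the endomorphism algebra.

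Since $L/\QQ$ is Galois, the completion $L_\dpr$ is Galois over $\QQ_p$ with Galois group canonically identified with $D$. Fixing an embedding $L_\dpr \hookrightarrow \overbar{\QQ}_p$ over $\QQ_p$, restriction of automorphisms yields a continuous surjection $\Gal(\overbar{\QQ}_p/\QQ_p) \twoheadrightarrow \Gal(L_\dpr/\QQ_p) = D$. The compatibility of the ramification filtrations under restriction, a standard fact in local Galois theory, then gives that the inertia subgroup of $\Gal(\overbar{\QQ}_p/\QQ_p)$ surjects onto the inertia subgroup $G_0$ of $D$, and the wild inertia surjects onto $G_1$. Part (i) follows at once: for $\sigma \in D$ we have $\sigma\dpr = \dpr$ by definition of the decomposition group, so from the factorization $\pi\OO_L = \prod_\mfP \mfP^{a(\mfP)}$ we deduce $v_\dpr(\sigma\pi) = a(\sigma^{-1}\dpr) = a(\dpr) = v_\dpr(\pi)$, and dividing through by $k$ yields the claim.

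For part (ii), set $\mfp = \dpr \cap K_A$ and let $e_\mfp, f_\mfp$ denote the ramification and residue degrees of $\mfp$ over $p$. A short tower-law computation using $\Stab_D(\pi) = \Gal(L_\dpr/K_{A,\mfp})$, $|D| = e(\dpr/p)\,f(\dpr/p)$, and $v_\dpr(\pi) = e(\dpr/\mfp)\,v_\mfp(\pi)$ yields $|D\pi| = e_\mfp f_\mfp$ and $v(\pi) = v_\mfp(\pi)/(e_\mfp m)$, where $q = p^m$; together these give
\[
  \sum_{\pi' \in D\pi} v(\pi') \;=\; |D\pi| \cdot v(\pi) \;=\; \frac{f_\mfp \, v_\mfp(\pi)}{m}.
\]
The main obstacle is to show the right-hand side is an integer, for which I would invoke Honda--Tate: the local invariant at $\mfp$ of the central simple algebra $\End^0(A)/K_A$ is $\frac{v_\mfp(\pi)}{v_\mfp(q)}\cdot[K_{A,\mfp}:\QQ_p] \equiv \frac{f_\mfp \, v_\mfp(\pi)}{m} \pmod{1}$. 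Hypothesis \ref{item:commutative-End} of \cond\ forces $\End^0(A) = K_A$, so this invariant vanishes in $\QQ/\ZZ$, which gives $m \mid f_\mfp \, v_\mfp(\pi)$ as required.
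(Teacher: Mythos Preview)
Your proof is correct and follows essentially the same route as the paper's: standard local Galois theory for the surjection and part~(i), and Honda--Tate plus the commutativity hypothesis~\ref{item:commutative-End} for part~(ii). The paper's proof is a two-line sketch (citing Serre for~(i) and invoking ``lattice break points of the $q$-Newton polygon'' together with~\ref{item:commutative-End} for~(ii)), whereas you have unpacked~(ii) into an explicit local-invariant computation; the content is the same, since the vanishing of the Brauer invariants at primes above $p$ is exactly what forces $v(h(0)) \in \ZZ$ for each $\QQ_p$-irreducible factor $h$ of $P_A$, which is the ``lattice break point'' statement orbit-by-orbit. One minor notational slip: for a general $\pi \in \cR_A$ you should take $\mfp = \dpr \cap \QQ(\pi)$ rather than $\dpr \cap K_A$, since $\QQ(\pi)$ is only a conjugate of $K_A$ inside $L$; this does not affect the argument.
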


\begin{proof}
  The claim in \ref{i:NP-stab} is a property of the decomposition group (see e.g., \cite[IV.2]{Serre67}). Part \ref{i:vertical-length} is a consequence of {\cond} part \ref{item:commutative-End} and the fact that $q$-Newton polygons of Frobenius polynomials of abelian varieties have lattice points as break points.
\end{proof}

Motivated by \Cref{prop:decomp-group}, we make the following definition which provides a constraint on the weighted permutation representations which may arise from abelian varieties over $\Fq$ (satisfying {\cond}).

\begin{defn}
  \label{def:p-admissible-filtration}
  Let $p$ be a prime number and let $\rho = (w, G)$ be a weighted permutation representation. We say that a sequence of subgroups
  \[G \supseteq D \supseteq G_0 \supseteq G_1\]
  is a \cdef{weak $p$-admissible filtration of $\rho$} if $D \subseteq \Stab(w)$ and there exists a surjection $\Gal(\overbar{\QQ}_p/\QQ_p) \to D$ such that the image of the inertia group is $G_0$ and the image of the wild inertia group is $G_1$. We say that $\rho$ is \cdef{weakly $p$-admissible} if it admits a weak $p$-admissible filtration.

  A \cdef{strong $p$-admissible filtration of $\rho$} is a weak $p$-admissible filtration which enjoys the property that for every $\decor{i} \in X_{2d}$, we have $\sum_{\decor{j} \in D\decor{i}}w(\decor{j}) \in \ZZ$. We say that $\rho$ is \cdef{strongly $p$-admissible} if it admits a strong $p$-admissible filtration.
\end{defn}

\begin{proposition}
  \label{prop:AV-is-admissible}
  Suppose that $A/\Fq$ is an abelian variety satisfying {\cond} which (together with a choice of indexing) gives rise to a weighted permutation representation $\rho$. Then $\rho$ is strongly $p$-admissible (where $p$ is the characteristic of $\Fq$).
\end{proposition}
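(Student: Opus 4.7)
The plan is to recycle the arithmetic data fixed at the start of Section~\ref{sec:weil-numbers}: take the candidate filtration $G \supseteq D \supseteq G_0 \supseteq G_1$ to be the one whose terms are the decomposition, tame inertia, and wild inertia subgroups of the chosen prime $\dpr \in \cP_L$ above $p$. Verifying the three clauses of Definition~\ref{def:p-admissible-filtration} then amounts to an almost mechanical unpacking of Lemma~\ref{prop:decomp-group}.

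First, the required surjection $\Gal(\overbar{\QQ}_p/\QQ_p) \twoheadrightarrow D$ identifying $G_0$ and $G_1$ with the images of the tame and wild inertia subgroups is precisely the first sentence of Lemma~\ref{prop:decomp-group}; concretely it is the composition of the canonical surjection $\Gal(\overbar{\QQ}_p/\QQ_p) \twoheadrightarrow \Gal(L_\dpr/\QQ_p)$ with the standard isomorphism $\Gal(L_\dpr/\QQ_p) \cong D$. Second, the inclusion $D \subseteq \Stab(w)$ comes from part~\ref{i:NP-stab} of the same lemma after pushing through $w = v \circ \mathcal{I}$: if $v(\sigma \pi) = v(\pi)$ for every $\sigma \in D$ and every $\pi \in \cR_A$, then $w(\sigma \decor{i}) = w(\decor{i})$ for every $\sigma \in D$ and $\decor{i} \in X_{2g}$, which is exactly the defining property of $\Stab(w)$.

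Third, the strong-admissibility orbit-sum condition is transported directly from part~\ref{i:vertical-length}: for $\decor{i} \in X_{2g}$ with $\pi = \mathcal{I}(\decor{i})$, the bijection $\mathcal{I}$ carries the $D$-orbit $D \decor{i}$ onto the $D$-orbit $D\pi$, so
\[
  \sum_{\decor{j} \in D\decor{i}} w(\decor{j}) = \sum_{\pi' \in D\pi} v(\pi') \in \ZZ.
\]
The only subtlety worth pausing over is to confirm that the $G$-action on $X_{2g}$ induced by the inclusion $G \hookrightarrow W_{2g}$ genuinely matches the $G$-action on $\cR_A$ under $\mathcal{I}$; but this is how that inclusion was constructed in the paragraph following \Cref{def:wpr}, and in particular the $D$-orbits on the two sides correspond. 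There is therefore no significant obstacle: the proposition is a purely formal translation of the arithmetic statements already recorded in Lemma~\ref{prop:decomp-group} into the combinatorial language of weighted permutation representations.
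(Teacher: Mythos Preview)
Your proof is correct and follows exactly the same approach as the paper's own proof, which simply reads ``The claim follows directly from the assertions listed in \Cref{prop:decomp-group}.'' You have merely made explicit the one-to-one correspondence between the clauses of \Cref{def:p-admissible-filtration} and the conclusions of \Cref{prop:decomp-group} that the paper leaves implicit.
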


\begin{proof}
  The claim follows directly from the assertions listed in \Cref{prop:decomp-group}.
\end{proof}

\begin{remark}
  \label{remark:conditions-on-D}
  Observe that if $\rho = (w, G)$ is weakly $p$-admissible, then the explicit presentation of the tame part of $\Gal(\overbar{\QQ}_p/{\QQ}_p)$ given by a theorem of Iwasawa (see \cite[Theorem~7.5.3]{neukirch-cohomology}) imposes the following conditions:
  \begin{enumerate}[label=(\roman*)]
  \item
    $G_0 \subseteq D$ is a normal subgroup and $D/G_0$ is cyclic,
  \item
    $G_1 \subseteq D$ is a normal subgroup and $G_0/G_1$ is cyclic of order prime to $p$,
  \item
    $G_1$ is a $p$-group, and
  \item
    there exist elements $\sigma, \tau \in D/G_1$ such that $\sigma$ and $\tau$ generate $D/G_1$, $\tau$ generates $G_0/G_1$ and $\sigma \tau \sigma^{-1} = \tau^p$.
  \end{enumerate}
  The reader is encouraged to compare these conditions with those in \cite[Theorem~1.3]{Liu-inertia}, \cite[Theorem~1.3]{Liu-inertia-thesis}. When $p$ is odd an explicit presentation for $\Gal(\overbar{\QQ}_p/\QQ_p)$ was computed by Jannsen and Wingberg and is given in \cite[Theorem~7.5.14]{neukirch-cohomology}. In principle, one can deduce from this theorem the precise group theoretic conditions classifying triples $(D,G_0,G_1)$ arising from surjections from $\Gal(\overbar{\QQ}_p/\QQ_p)$. It would be interesting to use this to compute a list of weakly $p$-admissible weighted permutation representations. In this paper, we do not make use of this explicit presentation and instead show weak $p$-admissibility by exhibiting an appropriate number field.
\end{remark}

\subsection{A group-theoretic formula for \texorpdfstring{$\div_A$}{div\_A}}
Let $A$ be an abelian variety satisfying {\cond} and fix an indexing of the roots and a prime $\dpr$ in $L$ above $p$. In this section, we show that the divisor map, $\div_A$, can be realized directly from the weighted permutation representation associated to $A$. Set $K \coloneqq \QQ(\pi_{\decor{1}})$ and let $H \subseteq G$ be the subgroup fixing $K \subseteq L$. The following lemma follows by construction together with standard results.

\begin{lemma}
  \label{lemma:g-isos}
  If $A$ satisfies {\cond} then we have isomorphisms of $G$-sets
  \begin{enumerate}[label=(\roman*)]
    \item \label{lemma:G-mod-H-eq-X2g} 
    $G/H \xrightarrow{\sim} X_{2g}$ given by $\sigma H \mapsto \sigma(\decor{1})$, and
    \item \label{lemma:G-mod-D-eq-PL}
    $G/D \xrightarrow{\sim} \cP_L$ given by $\sigma D \mapsto \sigma(\dpr)$.
  \end{enumerate}
\end{lemma}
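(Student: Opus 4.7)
The plan is to verify both isomorphisms via the orbit-stabilizer theorem, after checking in each case that the relevant $G$-action is transitive with the claimed stabilizer. Both assertions are, on reflection, routine consequences of standard facts about Galois groups acting on roots of irreducible polynomials and on primes in Galois extensions; the main content is recognizing that the hypotheses of {\cond} put us in the setting where these standard facts apply cleanly.

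For part \ref{lemma:G-mod-H-eq-X2g}, the first observation is that $P_A(T)$ is irreducible over $\QQ$. This is already noted in the paragraph following \Cref{def:assumption}: under {\cond}, Honda--Tate guarantees that $P_A(T)$ equals the minimal polynomial of $\pi_A$ and that $K_A = \QQ(\pi_A)$ is a CM field of degree $2g$. Consequently $G = \Gal(L/\QQ)$ acts transitively on the roots $\cR_A$ of $P_A(T)$, and hence (via the indexing $\mathcal{I}$) transitively on $X_{2g}$ with distinguished point $\decor{1}$ corresponding to $\pi_{\decor{1}} \in \cR_A$. By definition $K = \QQ(\pi_{\decor{1}})$ and $H = \Gal(L/K)$, so $H$ is precisely the stabilizer in $G$ of $\pi_{\decor{1}}$, i.e.\ of $\decor{1} \in X_{2g}$. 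The orbit-stabilizer map $\sigma H \mapsto \sigma(\decor{1})$ is therefore a well-defined $G$-equivariant bijection, and a count confirms $|G/H| = [L:\QQ]/[L:K] = [K:\QQ] = 2g = |X_{2g}|$.

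For part \ref{lemma:G-mod-D-eq-PL}, the relevant standard fact is that for a Galois extension $L/\QQ$, the Galois group $G$ acts transitively on the set of prime ideals of $\OO_L$ lying above $p$, which is by definition $\cP_L$. The stabilizer of the fixed prime $\dpr \in \cP_L$ is, by definition, the decomposition group $D$. Orbit-stabilizer then yields the $G$-equivariant bijection $\sigma D \mapsto \sigma(\dpr)$, as claimed.

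The only subtlety worth flagging is ensuring that the map in \ref{lemma:G-mod-H-eq-X2g} is well-defined and $G$-equivariant when we work with the abstract set $X_{2g}$ rather than directly with $\cR_A$: this is immediate because the $G$-action on $X_{2g}$ is defined by transport of structure along $\mathcal{I}$, so $\sigma(\decor{1}) \in X_{2g}$ is by construction the unique symbol sent by $\mathcal{I}$ to $\sigma \cdot \pi_{\decor{1}}$. No genuine obstacle arises; the lemma is essentially a bookkeeping statement that packages the orbit-stabilizer identification in the notation used for the weighted permutation representation.
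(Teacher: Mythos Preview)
Your proof is correct and is exactly the standard argument the paper has in mind: the paper's own ``proof'' is simply the sentence that the lemma ``follows by construction together with standard results,'' and you have spelled out those standard results (transitivity of $G$ on the roots of an irreducible polynomial and on the primes above $p$ in a Galois extension, together with orbit--stabilizer).
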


We emphasize that in \Cref{defn:Phi} and \Cref{lemma:Phi-Gequiv} the weighted permutation representation $\rho = (w,G)$ need not arise from an abelian variety.

\begin{defn}
  \label{defn:Phi}
  Given a weighted permutation representation $\rho = (w,G)$ define the $\QQ$-linear map $\Phi_{\rho} \colon \vecsp{\QQ}{X_{2g}} \rightarrow \vecsp{\QQ}{G}$ to be given by
  \begin{equation*}
    \Phi_{\rho}(\decor{i}) \colonequals \sum_{\sigma \in G}w(\sigma^{-1}(\decor{i}))\sigma.
  \end{equation*}
\end{defn}

\begin{lemma}
  \label{lemma:Phi-Gequiv}
  If $\rho= (w,G)$ is a weighted permutation representation then the $\QQ$-linear map $\Phi_{\rho} \colon \vecsp{\QQ}{X_{2g}} \rightarrow \vecsp{\QQ}{G}$ is $G$-equivariant. 
\end{lemma}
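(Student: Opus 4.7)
The plan is to verify the equivariance directly from the definitions, using the natural left $G$-actions: $G$ acts on $\vecsp{\QQ}{X_{2g}}$ by permuting basis elements via $\tau \cdot \decor{i} = \tau(\decor{i})$, and $G$ acts on $\vecsp{\QQ}{G}$ by left multiplication, $\tau \cdot \sigma = \tau\sigma$. With these conventions fixed, the statement reduces to the identity $\Phi_\rho(\tau(\decor{i})) = \tau \cdot \Phi_\rho(\decor{i})$ for every $\tau \in G$ and $\decor{i} \in X_{2g}$, which I would check by a single reindexing of the summation.

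Explicitly, expanding the left-hand side from \Cref{defn:Phi} gives
\begin{equation*}
  \Phi_\rho(\tau(\decor{i})) = \sum_{\sigma \in G} w(\sigma^{-1}\tau(\decor{i}))\, \sigma,
\end{equation*}
while the right-hand side is
\begin{equation*}
  \tau \cdot \Phi_\rho(\decor{i}) = \sum_{\sigma \in G} w(\sigma^{-1}(\decor{i}))\, \tau\sigma.
\end{equation*}
Performing the substitution $\sigma' = \tau\sigma$ (so that $\sigma^{-1} = \sigma'^{-1}\tau$) in the right-hand sum yields $\sum_{\sigma' \in G} w(\sigma'^{-1}\tau(\decor{i}))\, \sigma'$, which agrees with the left-hand side term by term. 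Since $\vecsp{\QQ}{X_{2g}}$ is generated (as a $\QQ$-vector space) by the basis elements $\decor{i}$, linear extension gives $G$-equivariance on all of $\vecsp{\QQ}{X_{2g}}$.

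There is no real obstacle: the lemma is bookkeeping verifying that the factor $w(\sigma^{-1}(\decor{i}))$ was chosen precisely so that the shift $\sigma \mapsto \tau\sigma$ on the group side matches the action $\decor{i} \mapsto \tau(\decor{i})$ on the set side. The only point worth flagging is the convention for the $G$-action on $\vecsp{\QQ}{G}$ (left regular representation), which is what makes the inverse in $\sigma^{-1}(\decor{i})$ necessary in the definition of $\Phi_\rho$.
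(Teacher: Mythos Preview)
Your proof is correct and takes essentially the same approach as the paper: both verify $\tau\Phi_\rho(\decor{i}) = \Phi_\rho(\tau(\decor{i}))$ directly by the substitution $\sigma \mapsto \tau\sigma$ in the defining sum. The paper's version is terser (a single displayed chain of equalities), but the content is identical.
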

\begin{proof}
  For each $\tau \in G$, we have
  \begin{equation*}
    \tau\Phi_{\rho}(\decor{i}) = \sum_{\sigma \in G}w(\sigma^{-1}(\decor{i}))\tau \sigma = \sum_{\gamma \in G}w(\gamma^{-1} \tau(\decor{i}))\gamma  
  \end{equation*}
  which is equal to $\Phi_{\rho}(\tau (\decor{i}))$, as required.
\end{proof}

Consider the inflation map $\inf \colon \vecsp{\QQ}{G/D} \rightarrow \vecsp{\QQ}{G}$ given by $\sigma D \mapsto \sum_{\tau \in D}\sigma \tau$ and identify the $G$-representations $\vecsp{\QQ}{\cP_L}$ and $\vecsp{\QQ}{G/D}$ via the isomorphism in \Cref{lemma:g-isos}\ref{lemma:G-mod-D-eq-PL}.

\begin{proposition}
  \label{prop:div-infl}
  Let $A$ be an abelian variety satisfying {\cond}, let $k = v_{\dpr}(q)$, and let $\rho = \rho_A = (w,G)$ be a weighted permutation representation associated to $A$ (for some choice of indexing). Then the following diagram commutes:
  \begin{equation*}
    \begin{tikzcd}
      &  \vecsp{\QQ}{G/D} \arrow{dr}{\inf} \\
      \vecsp{\QQ}{X_{2g}} \arrow{ur}{\div_A} \arrow[rr, "k\cdot\Phi_{\rho}"'] &&  \vecsp{\QQ}{G}.
    \end{tikzcd}
  \end{equation*}
\end{proposition}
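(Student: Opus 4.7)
The plan is to verify the commutativity of the diagram on generators $\decor{i} \in X_{2g}$ by unpacking the definitions of $\div_A$, $\inf$, and $\Phi_\rho$ and comparing the resulting sums over $G$.

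First, I would use the identification of \Cref{lemma:g-isos}\ref{lemma:G-mod-D-eq-PL} to write every prime $\mfP \in \cP_L$ as $\sigma(\dpr)$ for some $\sigma \in G$, so that $\vecsp{\QQ}{\cP_L}$ is identified with $\vecsp{\QQ}{G/D}$. Then, for the Frobenius eigenvalue $\pi_\i = \mathcal{I}(\decor{i})$, the prime factorization appearing in \Cref{def:divisor-map} reads
\begin{equation*}
  \div_A(\decor{i}) = \sum_{\sigma D \in G/D} v_{\sigma(\dpr)}(\pi_\i)\,(\sigma D).
\end{equation*}
The standard formula $v_{\sigma(\dpr)}(\pi_\i) = v_{\dpr}(\sigma^{-1}\pi_\i)$, together with the $G$-equivariance of $\mathcal{I}$ and the normalization $w = v \circ \mathcal{I} = \tfrac{1}{k} v_{\dpr} \circ \mathcal{I}$, gives $v_{\sigma(\dpr)}(\pi_\i) = k\cdot w(\sigma^{-1}(\decor{i}))$.

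Next I need to check this expression is well-defined on cosets, which is where the decomposition-group structure enters. If $\sigma' = \sigma\tau$ with $\tau \in D$, then $\sigma'(\dpr) = \sigma(\dpr)$ and by \Cref{prop:decomp-group}\ref{i:NP-stab} we have $w(\tau^{-1}\sigma^{-1}(\decor{i})) = w(\sigma^{-1}(\decor{i}))$, so $D \subseteq \Stab(w)$. Consequently
\begin{equation*}
  \div_A(\decor{i}) = k \sum_{\sigma D \in G/D} w(\sigma^{-1}(\decor{i}))\,(\sigma D).
\end{equation*}

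Applying $\inf$ and interchanging the order of summation converts the sum over cosets into a sum over all of $G$: writing a general $\sigma' \in G$ uniquely as $\sigma' = \sigma\tau$ with $\sigma$ a chosen coset representative and $\tau \in D$, and using again that $w(\sigma'^{-1}(\decor{i})) = w(\sigma^{-1}(\decor{i}))$, we find
\begin{equation*}
  \inf(\div_A(\decor{i})) = k \sum_{\sigma D} w(\sigma^{-1}(\decor{i})) \sum_{\tau \in D} \sigma\tau = k \sum_{\sigma' \in G} w(\sigma'^{-1}(\decor{i}))\,\sigma' = k \cdot \Phi_\rho(\decor{i}),
\end{equation*}
which is exactly the desired identity. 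The only nontrivial step is the reduction to cosets, i.e., the verification that $D$ stabilizes $w$, which is an immediate consequence of \Cref{prop:decomp-group}; the rest is bookkeeping with the normalization constant $k$.
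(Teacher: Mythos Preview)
Your proof is correct and follows essentially the same approach as the paper's: both compute the exponent of $\sigma(\dpr)$ in $\pi_\i\OO_L$ as $v_{\dpr}(\sigma^{-1}\pi_\i)=k\,w(\sigma^{-1}(\decor{i}))$ and then unwind $\inf$. The only cosmetic difference is that the paper first reduces to the single generator $\decor{1}$ by invoking the $G$-equivariance of all three maps, whereas you treat a general $\decor{i}$ directly and instead check well-definedness on $D$-cosets via \Cref{prop:decomp-group}\ref{i:NP-stab}; the underlying computation is identical.
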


\begin{proof}
  Because $G$ is transitive and all maps in the triangle are $G$-equivariant, it suffices to show that $\inf \div_A (\decor{1}) = k\cdot\Phi_{\rho}(\decor{1})$. For each $\sigma \in G$ let $a(\sigma) \in \ZZ_{\geq 0}$ be chosen so that 
  \begin{equation*}
    \pi_{\decor{1}} \cO_L = \prod_{\sigma D \in G/D}\sigma(\dpr)^{a(\sigma)}.
  \end{equation*}
  Therefore, applying \Cref{lemma:g-isos}\ref{lemma:G-mod-D-eq-PL}, for all $\tau \in G$ we have
  \begin{align*}
    \label{eqn:expansion-of-taupi1}
    \tau \pi_{\decor{1}} \cO_L &= \prod_{\sigma D \in G/D} \tau \sigma(\dpr)^{a(\sigma)} \\
                               &= \prod_{\gamma D \in G/D} \gamma(\dpr)^{a(\tau^{-1} \gamma)} .
  \end{align*}
  Taking valuations on both sides, we see $v_{\dpr}(\tau \pi_{\decor{1}}) = a(\tau^{-1})$. It follows that for each $\sigma \in G$ we have $a(\sigma) = v_{\dpr}(\sigma^{-1} \pi_{\decor{1}}) = k v(\sigma^{-1} \pi_{\decor{1}}) = k w(\sigma^{-1} (\decor{1}))$. Therefore
  \begin{equation*}
    \pi_{\decor{1}} \cO_L = \left( \prod_{\sigma D \in G/D}  \sigma(\dpr)^{w(\sigma^{-1}(\decor{1}))} \right)^k.
  \end{equation*}
  By \Cref{lemma:g-isos}\ref{lemma:G-mod-D-eq-PL} and the definition of $\div_A$ we see that
  \begin{equation*}
    \div_A (\decor{1}) = k \sum_{\sigma D \in G/D} w(\sigma^{-1}(\decor{1})) \; \sigma D \in \vecsp{\QQ}{G/D},
  \end{equation*}
  and therefore
  \begin{equation*}
    \inf \div_A (\decor{1}) =  k \sum_{\sigma \in G} w(\sigma^{-1}(\decor{1})) \; \sigma \in \vecsp{\QQ}{G},
  \end{equation*}
  which is equal to $k \cdot \Phi_{\rho}(\decor{1})$, as required.
\end{proof}

\section{Angle ranks of geometrically simple abelian varieties}
\label{sec:angle-ranks-geom}

In this section we prove \Cref{thm:ord-tankeev}. The first input is \Cref{lemma:geom-irred}, in which we show that if $A$ is an abelian variety satisfying {\cond} then we may recognize whether $A$ is geometrically simple directly from its associated weighted permutation representation. Indeed it is shown in \cite[Lemma~3.11]{us} that the angle rank of $A$ may also be determined from its weighted permutation representation. Combining these results with ideas of Dupuy--Kedlaya--Zureick-Brown~\cite[Section~3]{DupuyKedlayaZureick-Brown22} (and refinements thereof) we prove \Cref{thm:ord-tankeev}. More precisely, we prove a generalization of \Cref{thm:ord-tankeev}, namely \Cref{prop:refined-gen-tankeev}.

\begin{defn}
  We say that a weighted permutation representation $\rho = (w,G)$ is \cdef{geometrically simple} if $\Stab_G(\Phi_{\rho}(\decor{1})) = \Stab_G(\decor{1})$ where $\Phi_\rho$ is the linear map defined in \Cref{defn:Phi}.
\end{defn}

\begin{lemma}
  \label{lemma:geom-irred}
  Let $A$ be an abelian variety satisfying {\cond} and let $\rho$ be a weighted permutation representation associated to $A$ (for some choice of indexing). Then $A$ is geometrically simple if and only if $\rho$ is geometrically simple. 
\end{lemma}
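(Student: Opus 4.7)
The plan is to show that both sides of the equivalence describe the same subgroup of $G$, namely the union $\bigcup_{n\geq 1}\Stab_G(\pi_{\decor{1}}^n)$, and then observe that the geometric simplicity of $A$ is the assertion that this union collapses to $\Stab_G(\decor{1})=H$.

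\emph{Step 1: translate geometric simplicity of $A$.} Since $A$ satisfies \cond, the Honda--Tate theorem implies that $A$ is geometrically simple if and only if $A_{\FF_{q^n}}$ is simple for every $n\geq 1$, which in turn is equivalent to $\QQ(\pi_A^n)=K_A$ for every $n\geq 1$. By the Galois correspondence this is the statement $\Stab_G(\pi_{\decor{1}}^n)=H$ for every $n\geq 1$, where we identify $\decor{1}$ with $\pi_{\decor{1}}$ via the fixed indexing.

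\emph{Step 2: identify $\Stab_G(\Phi_\rho(\decor{1}))$ with $\Stab_G(\pi_{\decor{1}}\OO_L)$.} The inflation map $\inf\colon\vecsp{\QQ}{G/D}\to\vecsp{\QQ}{G}$ is injective (distinct $D$-cosets contribute disjoint sums), so \Cref{prop:div-infl} gives
\[
\Stab_G(\Phi_\rho(\decor{1}))=\Stab_G(k\cdot\Phi_\rho(\decor{1}))=\Stab_G(\inf\circ\div_A(\decor{1}))=\Stab_G(\div_A(\decor{1})).
\]
The right-hand side is precisely the set of $\sigma\in G$ such that $\sigma(\pi_{\decor{1}})\OO_L=\pi_{\decor{1}}\OO_L$.

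\emph{Step 3: the root-of-unity argument.} For $\sigma\in G$, the equality $\sigma(\pi_{\decor{1}})\OO_L=\pi_{\decor{1}}\OO_L$ says $\sigma(\pi_{\decor{1}})=u\pi_{\decor{1}}$ for some unit $u\in\OO_L^\times$. Because both $\sigma(\pi_{\decor{1}})$ and $\pi_{\decor{1}}$ are $q$-Weil numbers, $u$ has absolute value one in every complex embedding of $L$, and by Kronecker's theorem $u$ is a root of unity. Conversely, if $\sigma(\pi_{\decor{1}})=\zeta\pi_{\decor{1}}$ with $\zeta^n=1$, then $\sigma\in\Stab_G(\pi_{\decor{1}}^n)$. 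Hence
\[
\Stab_G(\Phi_\rho(\decor{1}))=\Stab_G(\pi_{\decor{1}}\OO_L)=\bigcup_{n\geq 1}\Stab_G(\pi_{\decor{1}}^n).
\]

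\emph{Step 4: conclude.} Since $\Stab_G(\pi_{\decor{1}}^n)\subseteq\Stab_G(\pi_{\decor{1}}^{mn})$, the union in Step~3 is an honest subgroup containing $H=\Stab_G(\decor{1})$, and equals $H$ exactly when $\Stab_G(\pi_{\decor{1}}^n)=H$ for every $n\geq 1$. Combining with Step~1 yields the desired equivalence.

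The only genuine input beyond bookkeeping is the root-of-unity step; everything else is either the injectivity of $\inf$, \Cref{prop:div-infl}, or the Galois correspondence, so no serious obstacle is expected.
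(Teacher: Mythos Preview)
Your proof is correct and follows essentially the same route as the paper's: both reduce to showing that $\Stab_G(\Phi_\rho(\decor{1}))=\Stab_G(\div_A(\decor{1}))$ via \Cref{prop:div-infl}, and both identify the latter with the condition that no two conjugates $\pi_{\decor i},\pi_{\decor j}$ differ by a root of unity (your Kronecker argument is exactly the paper's ``$\pi_{\decor i}/\pi_{\decor j}$ has norm $1$ at all places'' step). Your presentation is slightly more explicit about the injectivity of $\inf$ and the stabilizer bookkeeping, but there is no substantive difference in strategy.
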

\begin{proof}
    Observe that in general, $\Stab_G(\1) \subset \Stab_G(\div_A(\1))$. By \Cref{prop:div-infl}, $\Stab_G(\div_A(\1)) = \Stab_G(\Phi_{\rho}(\decor{1}))$, so it suffices to show that $A$ is geometrically simple if and only if $\Stab_G(\1) = \Stab_G(\div_A(\1))$.
    
    By the Honda--Tate theorem, $A$ is geometrically simple if and only if the minimal polynomial of the Frobenius of the base change $A_{\FF_{q^n}}$ is irreducible for every $n \geq 1$. Thus $A$ is geometrically simple if and only if for all integers $n \geq 1$ and $\decor{1} \leq \decor{i} < \decor{j} \leq \decor{g}$ we have $\pi_{\decor{i}}^n \neq \pi_{\decor{j}}^n$. But, since $\pi_{\decor{i}} / \pi_{\decor{j}}$ has norm $1$ for all places of $L$, this is the case if and only if $\pi_{\decor{i}} \cO_L \neq \pi_{\decor{j}} \cO_L$ for all $\decor{1} \leq \decor{i} < \decor{j} \leq \decor{g}$. This observation, combined with the transitivity of $G$, implies that $A$ is geometrically simple if and only if $\Stab_G(\1) = \Stab_G(\div_A(\1))$, as required.
\end{proof}

We now recall from \cite[Section~3]{us} that the angle rank of $A$ may be computed from its associated weighted permutation representation. To this end we make the following definition (which can be seen to be equivalent to that given in~\cite[Definition~3.8]{us} by combining \Cref{prop:div-infl} with \cite[Lemma~3.11]{us}, cf.~\cite[Proposition~3.1]{DupuyKedlayaZureick-Brown22}).

\begin{defn}
  We define the \cdef{angle rank} of a weighted permutation representation $\rho$ to be
  \[\delta_\rho = \rank \Phi_\rho - 1.\]
\end{defn}

\begin{lemma}
  \label{lemma:ar-lemma}
  Let $A$ be an abelian variety satisfying {\cond} and let $\rho$ be a weighted permutation representation associated to $A$ (for some choice of indexing of the roots). Then $\delta_\rho = \delta_A$.
\end{lemma}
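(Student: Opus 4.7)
The plan is to chain the following equalities:
\[
\delta_\rho \;=\; \rank \Phi_\rho - 1 \;=\; \rank \div_A - 1 \;=\; \delta_A,
\]
where the first holds by definition, the middle follows directly from \Cref{prop:div-infl}, and the third is essentially \cite[Lemma~3.11]{us}.

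First I would exploit \Cref{prop:div-infl}, which factors $\inf \circ \div_A = k \cdot \Phi_\rho$ as $\QQ$-linear maps out of $\vecsp{\QQ}{X_{2g}}$. The inflation map $\inf \colon \vecsp{\QQ}{G/D} \to \vecsp{\QQ}{G}$ sends the basis element $\sigma D$ to $\sum_{\tau \in D} \sigma \tau$, a sum supported on the coset $\sigma D \subset G$; since distinct cosets are disjoint, $\inf$ is injective. Combined with the fact that $k = v_\dpr(q)$ is a positive integer, taking $\QQ$-ranks immediately yields $\rank \Phi_\rho = \rank \div_A$.

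Second, I would invoke \cite[Lemma~3.11]{us} (cf.~\cite[Proposition~3.1]{DupuyKedlayaZureick-Brown22}) to identify $\delta_A$ with $\rank \div_A - 1$. The underlying picture is that $\Gamma\!_A \otimes_\ZZ \QQ$ is the image of the $\QQ$-linear map $\vecsp{\QQ}{\cR_A} \to L(\sqrt{q})^\times \otimes_\ZZ \QQ$ sending $\pi \mapsto \pi/\sqrt{q}$; an element $\sum c_\pi \pi$ lies in the kernel exactly when $\prod \pi^{c_\pi}$ equals a root of unity times $q^{(\sum c_\pi)/2}$, equivalently when $\div_A(\sum c_\pi \pi) = \tfrac{1}{2}(\sum c_\pi)\, \div(q)$. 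Because $\div(q) = \div_A(\pi) + \div_A(\bar\pi)$ already lies in the image of $\div_A$, this kernel has dimension exactly one greater than $\ker \div_A$, which gives the asserted extra ``$-1$''.

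Chaining the two equalities gives $\delta_\rho = \delta_A$. The main content has already been dispatched by \Cref{prop:div-infl} and by the previous work \cite{us}; the present lemma is a clean bookkeeping consequence that transports the rank computation from the arithmetic divisor map $\div_A$ to the purely group-theoretic map $\Phi_\rho$.
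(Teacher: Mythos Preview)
Your proposal is correct and takes essentially the same approach as the paper: the paper's proof is the single sentence ``This is \cite[Lemma~3.11]{us},'' and the parenthetical remark preceding the definition of $\delta_\rho$ already notes that the equivalence with the definition in \cite{us} comes from combining \Cref{prop:div-infl} with \cite[Lemma~3.11]{us}. You have simply unpacked this chain explicitly, correctly observing that the injectivity of $\inf$ yields $\rank\Phi_\rho = \rank\div_A$ and then invoking the cited lemma for $\rank\div_A - 1 = \delta_A$.
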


\begin{proof}
  This is \cite[Lemma~3.11]{us}.
\end{proof}

Before proving \Cref{thm:ord-tankeev} we introduce some notation. We write
\begin{equation*}
  \label{eq:QQ-X+}
  \vecsp{\QQ}{X_{2g}}^+ \colonequals  \frac{ \vecsp{\QQ}{X_{2g}} }{ \langle \decor{1} + \overline{\decor{1}}, \dots, \decor{g} + \overline{\decor{g}} \rangle},
\end{equation*}
\begin{equation*}
  \label{eq:QQ-G+}
  \vecsp{\QQ}{G}^+ \colonequals \frac{ \vecsp{\QQ}{G} }{ \big\langle \textstyle\sum_{\sigma \in G}\sigma \big\rangle},
\end{equation*}
and note that there is an induced $G$-equivariant linear map
\begin{equation}
  \label{eq:Phi+}
  \Phi_{\rho}^+ \colon \vecsp{\QQ}{X_{2g}}^+ \rightarrow \vecsp{\QQ}{G}^+ . 
\end{equation}
In particular, the angle rank $\delta_\rho$ of $\rho$ is equal to $\rank \Phi_\rho^+$. 

Let $\rho = (w,G)$ be a weighted permutation representation (not necessarily arising from an abelian variety). There is a natural exact sequence 
\[
    0 \to (\ZZ/2\ZZ)^g \to W_{2g} \to S_g \to 0.
\]
The group $(\ZZ/2\ZZ)^g$ is naturally identified with $\Hom(\{\1,\dots,\g\}, \ZZ/2\ZZ)$. Restricting to $G \subseteq W_{2g}$, we obtain an exact sequence
\[
    0 \to C \to G \to G^+ \to 0
\]
and therefore an inclusion $C \hookrightarrow \Hom(\{\1,\dots,\g\}, \ZZ/2\ZZ)$. By \cite[Lemma~3.10]{DupuyKedlayaZureick-Brown22} there exists a unique \cdef{level set partition} 

\[
  \{ \1, \dots ,\g\} = T_1 \sqcup \dots \sqcup T_m
\]
such that each element $c \in C$ is constant on each $T_i$ (when considered as a function on $\{\1,\dots,\g\}$), and the sets $T_i$ are maximal with this property. The first part of the following proposition is precisely \cite[Theorem 1.7]{DupuyKedlayaZureick-Brown22}, which states that if $A$ is an abelian variety and satisfies {\cond} (so $A$ is not supersingular), and $g/m$ is prime, then $A$ has angle rank $\delta_A \in \{m,g-m,g\}$. Indeed, the reader should compare the proof of \Cref{prop:refined-gen-tankeev} with \cite[Section~3.3]{DupuyKedlayaZureick-Brown22}.

Further, observe that if an abelian variety $A$ satisfies {\cond} and if $m = g$, then we have $\delta_A = g$ (by \cite[Corollary~3.12]{DupuyKedlayaZureick-Brown22}). In particular, \Cref{thm:ord-tankeev} follows immediately from \Cref{prop:refined-gen-tankeev} since $m \in \{1, g\}$ if $g$ is prime.

\begin{proposition}
  \label{prop:refined-gen-tankeev}
  Let $\rho = (w, G)$ be a weighted permutation representation (not necessarily arising from an abelian variety) which is not supersingular and which has $m$ elements in its level set partition. Suppose that $\tfrac{g}{m}$ is prime. Then $\rho$ has angle rank $\delta_\rho \in \{m,g-m,g\}$. Moreover, if $\tfrac{g}{m}$ is odd then:
  \begin{enumerate}[label=(\roman*)]
  \item \label{i:delta1-geom-simple-general} 
    $\delta_\rho = m < g$ if and only if $\rho$ is not geometrically simple, and
  \item \label{i:ar-max-general}
    if $w(X_{2g}) \subset \ZZ_{(2)}$, then $\delta_\rho \in \{m, g\}$.
  \end{enumerate}
\end{proposition}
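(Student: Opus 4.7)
The claim that $\delta_\rho \in \{m, g-m, g\}$ when $g/m$ is prime follows from a direct adaptation of the argument of \cite[Theorem~1.7]{DupuyKedlayaZureick-Brown22}. Although phrased there for abelian varieties, the proof in \cite[Section~3.3]{DupuyKedlayaZureick-Brown22} is purely combinatorial and, in view of Lemma~\ref{lemma:ar-lemma}, transfers verbatim to any weighted permutation representation with $g/m$ prime.

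The key setup for (i) and (ii) is the $C$-isotypic decomposition $\vecsp{\QQ}{X_{2g}}^+ = \bigoplus_{i=1}^m V^{(i)}$ with $V^{(i)} \colonequals \QQ\langle \decor{j} : j \in T_i \rangle$, on which $C$ acts by the sign character $\chi_{T_i}$ (by definition of the level set partition). Since $G^+$ acts transitively on the blocks $\{T_1, \dots, T_m\}$, the value $d \colonequals \rank(\Phi_\rho^+|_{V^{(i)}})$ is independent of $i$, and $\delta_\rho = md$. Because $|T_i| = g/m$ is prime, the stabilizer $\Stab_{G^+}(T_i)$ acts primitively on $T_i$; standard representation theory (Burnside's classification of primitive transitive subgroups of prime degree, together with the Galois action of $(\ZZ/p)^\times$ on the characters of $\ZZ/p$) implies that the augmentation subspace of $V^{(i)}$ is $\QQ$-irreducible. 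Hence the only proper nonzero $\Stab_G(T_i)$-stable subspaces of $V^{(i)}$ are the line $\QQ\langle \sum_{j \in T_i} \decor{j}\rangle$ and its complementary augmentation, forcing $d \in \{0, 1, g/m-1, g/m\}$; the value $d = 0$ is excluded, as $\Phi_\rho^+ = 0$ would force $w$ to be constant $1/2$, contradicting non-supersingularity.

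For (i), I would first verify that $\Stab_G(\Phi_\rho^+(\decor{1})) = \Stab_G(\Phi_\rho(\decor{1}))$: any relation $\sigma\Phi_\rho(\decor{1}) - \Phi_\rho(\decor{1}) = c\sum_\tau \tau$ forces $c = 0$ by averaging the coefficients of the $\tau$'s. Combined with the proof of Lemma~\ref{lemma:geom-irred}, $\rho$ is geometrically simple if and only if $\Stab_G(\Phi_\rho^+(\decor{1})) = H \colonequals \Stab_G(\decor{1})$. If $\rho$ is not geometrically simple, pick $\sigma \in G \setminus H$ fixing $\Phi_\rho^+(\decor{1})$. A $C$-isotypic argument forces $\sigma\decor{1} \in \{\decor{j}, \bar\decor{j} : j \in T_1\}$: otherwise $\Phi_\rho^+(\decor{1}) = \Phi_\rho^+(\sigma\decor{1})$ would lie in two distinct $C$-isotypic components of $\vecsp{\QQ}{G}^+$. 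The case $\sigma\decor{1} = \bar\decor{j}$ yields a kernel vector of $\Phi_\rho^+|_{V^{(1)}}$ with nontrivial components in both summands of $V^{(1)}$, whose $\Stab_G(T_1)$-orbit spans all of $V^{(1)}$ and forces $d = 0$, a contradiction. Hence $\sigma\decor{1} = \decor{j}$ for some $j \in T_1 \setminus \{1\}$, so $\decor{j} - \decor{1} \in \ker\Phi_\rho^+|_{V^{(1)}}$ lies in the augmentation, and irreducibility forces $\ker\Phi_\rho^+|_{V^{(1)}}$ to equal the augmentation, giving $d = 1$. Conversely, $d = 1$ implies $\decor{j} - \decor{1} \in \ker\Phi_\rho^+$ for every $j \in T_1$, so any $\sigma \in G$ with $\sigma\decor{1} = \decor{j}$ ($j \neq 1$) witnesses non-geometric simplicity.

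For (ii), suppose $d = g/m - 1$. By the subspace classification, $\ker\Phi_\rho^+|_{V^{(1)}} = \QQ\langle \sum_{j \in T_1}\decor{j}\rangle$, which is equivalent to the identities
\[
\sum_{j \in T_1} w(\sigma^{-1}\decor{j}) = c \qquad \text{for all } \sigma \in G
\]
for some $c \in \QQ$. Evaluating at $\sigma = 1$ and at the complex conjugation element $(\decor{1}\bar{\decor{1}})\cdots(\decor{g}\bar{\decor{g}})$, and using $w(\decor{j}) + w(\bar\decor{j}) = 1$, yields $2c = g/m$, so $c = g/(2m)$. Since $g/m$ is odd, $c$ has $2$ in its reduced denominator, so $c \notin \ZZ_{(2)}$. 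But $w(X_{2g}) \subseteq \ZZ_{(2)}$ forces the left-hand side into $\ZZ_{(2)}$, a contradiction; hence $d \neq g/m - 1$ and $\delta_\rho \in \{m, g\}$. The main obstacle in this plan is the case analysis in (i), ensuring that the kernel vector produced in the non-geometrically-simple case lies in the augmentation rather than having a nontrivial component along $\QQ\langle \sum\decor{j}\rangle$; the remaining steps flow routinely from the isotypic decomposition and the denominator check.
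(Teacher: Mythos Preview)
Your overall architecture matches the paper's: restrict $\Phi_\rho^+$ to a single block $V^{(1)}$ of the level-set partition, use primality of $g/m$ to force the kernel dimension into $\{0,1,g/m-1,g/m\}$, and analyze the three nontrivial cases. The first claim and part~(i) go through essentially as you describe (modulo minor rewording).

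There is, however, a genuine gap in your treatment of part~(ii), and it stems from the setup paragraph. You assert that $\Stab_{G^+}(T_1)$ acts on $V^{(1)}$ as the permutation module on $T_1$, so that the only proper nonzero submodules are the diagonal $\QQ\langle\sum_{j\in T_1}\decor{j}\rangle$ and the standard augmentation. But $\Stab_{G^+}(T_1)$ does not act on $V^{(1)}$ at all; what acts is its preimage $\widetilde{H}\subset G$, and elements of $\widetilde{H}$ can send $\decor{j}\mapsto\bar{\decor{k}}\equiv -\decor{k}$ in $\QQ\langle X_{2g}\rangle^+$. Thus $V^{(1)}$ is a \emph{signed} permutation module, and its unique $1$-dimensional submodule is $\QQ\langle\sum_{j\in T_1}\epsilon_j\,\decor{j}\rangle$ for some signs $\epsilon_j\in\{\pm1\}$, not necessarily all $+1$. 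For a concrete instance with $g=3$, $m=1$, take $G=\langle (\decor{1}\,\decor{2}\,\bar{\decor{3}})(\bar{\decor{1}}\,\bar{\decor{2}}\,\decor{3}),\,\iota\rangle$: the unique line in $V^{(1)}$ fixed by $\widetilde{H}=G$ is $\QQ\langle\decor{1}+\decor{2}-\decor{3}\rangle$, not $\QQ\langle\decor{1}+\decor{2}+\decor{3}\rangle$.

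Consequently your computation $c=g/(2m)$ is unjustified. The paper's proof handles exactly this: writing the kernel generator as $\sum_{j}\epsilon_j\,\decor{j}$ with $\epsilon_j\in\{\pm1\}$ and setting $T^{\pm}=\{j:\epsilon_j=\pm1\}$, one finds $c=(|T^+|-|T^-|)/2$; since $|T^+|+|T^-|=g/m$ is odd, $|T^+|-|T^-|$ is odd and $c\notin\ZZ_{(2)}$, giving the contradiction. Your argument is the special case $T^-=\emptyset$ and does not cover the general situation; replacing your diagonal identification with this signed version fixes the gap and recovers the paper's proof.
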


\begin{proof}
  Let $H_1 \colonequals \Stab_G(T_1)$. Note that $H_1$ is a cyclic subgroup of order $\tfrac{g}{m}$ which acts on the component $T_1$ of the level set partition. Consider the vector subspace $\vecsp{\QQ}{T_1} \subset \vecsp{\QQ}{X_{2g}}$, and write $\vecsp{\QQ}{T_1}^+$ for its image in $\vecsp{\QQ}{X_{2g}}^+$.  Let $\Phi_1^+$ denote the restriction of $\Phi_\rho^+$ to $\vecsp{\QQ}{T_1}^+$. The kernel of $\Phi_1^+$ is an $H_1$-representation. Since $\tfrac{g}{m}$ is prime, $\vecsp{\QQ}{T_1}^+$ decomposes into the direct sum of the diagonal subspace and the trace-zero subspace. In particular, we have $\dim(\ker \Phi_1^+) \in \{0, 1, \tfrac{g}{m}-1,\tfrac{g}{m}\}$. Now, as argued in \cite[Lemma 3.11]{DupuyKedlayaZureick-Brown22} we have
  \[
    \delta_\rho = m\cdot \rank \Phi_1^+ =  m \left(\dim \QQ\ideal{T_1}^+ - \dim(\ker \Phi_1^+) \right) = g - m\dim(\ker \Phi_1^+).
  \]
  Since $\delta_\rho = 0$ if and only if $\rho$ is supersingular we may further assume that $\dim(\ker \Phi_1^+) \in \{0, 1, \tfrac{g}{m}-1\}$.

  To see \ref{i:delta1-geom-simple-general} note that if $\delta_\rho = m$, then $\dim (\ker \Phi_1^+) = \tfrac{g}{m} - 1$ and thus $\rank(\Phi^+_1) = 1$. This implies that $\rho$ is not geometrically simple. Conversely, if $\rho$ is not geometrically simple, then $\Stab_G(\Phi_{\rho}(\1))$ strictly contains $\Stab_G(\1)$, so $\rank( \Phi^+_{1} ) \leq \lvert T_1\rvert/2$. But $|T_1| = \tfrac{g}{m} > 2$ (since $g$ is odd), so $\dim (\ker \Phi_1^+) = \tfrac{g}{m} - 1$, as required.

  To see \ref{i:ar-max-general}, suppose in search of contradiction that $\rho$ has angle rank $g-1$ and therefore $\dim(\ker \Phi_1^+) = 1$. Let $\sum_{\i}a_\i \,\i$ be a preimage in $\vecsp{\QQ}{T_1}$ for a generator of $\ker (\Phi_1^+)$.  Since $\ker \Phi_1^+$ is $H_1$-stable for all $\sigma \in H_1$ we have
  \[
    \sigma\bigg(\sum_{\i \in S_1}a_{\i} \,\i \bigg) = \pm \sum_{\i}a_{\i} \, \i,
  \]
  and because $H_1$ acts transitively on $\vecsp{\QQ}{T_1}$ we may assume $a_{\i} \in \{-1,1\}$. Set
  \begin{equation*}
    T^+ \colonequals \{\i \in T_1 : a_{\i} = 1\} \quad \text{and} \quad T^- \colonequals \{\i \in T_1 : a_{\i} = -1\}.
  \end{equation*}
  Since $T_1 = T^+ \sqcup T^-$, the difference $\lvert T^+\rvert - \lvert T^-\rvert$ is odd. By construction, we have
  \[
    \sum_{\i \in T^+}\Phi_{\rho}(\i) - \sum_{\i \in T^-}\Phi_{\rho}(\i) = \frac{\lvert T^+\rvert - \lvert T^-\rvert}{2}\sum_{\sigma \in G}\sigma.
  \]
  For each $\sigma \in G$ the coefficient of $\sigma$ in above expression is 
  \[
    \sum_{\i \in T^+}w(\sigma^{-1}(\i)) - \sum_{\i \in T^-}w(\sigma^{-1}(\i)) =  \frac{\lvert T^+\rvert - \lvert T^-\rvert}{2}
  \]
  However, the right hand side is not in $\ZZ_{(2)}$ and by the assumption on $w$, the left hand side is in $\ZZ_{(2)}$, a contradiction.
\end{proof}

We record the following interesting consequence of \Cref{prop:refined-gen-tankeev}.

\begin{coro}
  Let $A$ be a geometrically simple abelian variety of dimension $g$ satisfying {\cond}. Suppose that $|C| > 2$, that the dimension of $A$ is a product of two (not necessarily distinct) odd primes, and that the slopes of the Newton polygon of $A$ all lie in $\ZZ_{(2)}$. Then $A$ has maximal angle rank $\delta_A = g$. In particular, all powers of $A$ satisfy the Tate conjecture.
\end{coro}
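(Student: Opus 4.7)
The plan is to apply \Cref{prop:refined-gen-tankeev} after pinning down the level set partition of $A$. Let $\rho = (w,G)$ be a weighted permutation representation associated to $A$, write $g = pq$ with $p,q$ odd primes, and let $\{\1, \dots, \g\} = T_1 \sqcup \cdots \sqcup T_m$ be the level set partition.

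First I would argue that $m \in \{p, q, pq\}$. Since $G$ acts transitively on $X_{2g}$, the quotient $G^+ = G/C$ acts transitively on $\{\1, \dots, \g\}$, and because $C$ is normal in $G$ the conjugation action of $G^+$ on $C$ (viewed as functions $\{\1,\dots,\g\} \to \ZZ/2\ZZ$) is by precomposition with a permutation. This preserves the level set partition, so the $T_i$ form a $G^+$-block system; in particular they share a common size and $m \mid g$. On the other hand, the complex conjugation element $(\1\bar{\1})\cdots(\g\bar{\g}) \in C$ corresponds to the constant function $1 \in \Hom(\{\1, \dots, \g\}, \ZZ/2\ZZ)$, so the hypothesis $|C| > 2$ produces a non-constant element of $C$ and rules out $m = 1$.

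Next I dispose of the two surviving possibilities. If $m = g$, then $\delta_\rho = g$ directly by \cite[Corollary~3.12]{DupuyKedlayaZureick-Brown22}. Otherwise $m \in \{p, q\}$, so $g/m$ is an odd prime, and the hypothesis $w(X_{2g}) \subset \ZZ_{(2)}$ excludes the slope $1/2$, so $\rho$ is not supersingular. \Cref{prop:refined-gen-tankeev}\ref{i:ar-max-general} then yields $\delta_\rho \in \{m, g\}$. Since $A$ is geometrically simple, \Cref{lemma:geom-irred} implies that $\rho$ is geometrically simple, and part \ref{i:delta1-geom-simple-general} of the same proposition excludes $\delta_\rho = m$. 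Therefore $\delta_\rho = g$ in every case, and \Cref{lemma:ar-lemma} gives $\delta_A = g$.

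For the final sentence, the theorem of Zarhin recalled in \Cref{sec:angle-ranks-abelian} ensures that no power of $A$ admits an exceptional Tate class whenever $\delta_A = g$; combined with Tate's theorem \cite{Tate1966} for divisor classes on abelian varieties this establishes \Cref{conj:tate} for every $A^n$. The main (mild) obstacle is the block-system observation forcing $m \mid g$; once $m \in \{p, q\}$ is in hand, \Cref{prop:refined-gen-tankeev} does all of the remaining work.
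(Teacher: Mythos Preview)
Your proof is correct and matches the approach the paper implicitly intends: the corollary is stated without proof as ``an interesting consequence of \Cref{prop:refined-gen-tankeev},'' and your argument supplies exactly the missing details, namely that the level set partition is a $G^+$-block system (forcing $m \mid g$), that $|C| > 2$ rules out $m = 1$, and that the remaining cases $m \in \{p,q\}$ and $m = g$ are handled by \Cref{prop:refined-gen-tankeev} and \cite[Corollary~3.12]{DupuyKedlayaZureick-Brown22} respectively.
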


\section{An explicit criterion for the existence of exceptional classes}
\label{sec:an-expl-crit}
We now state \Cref{thm:main-tate-thm}, which provides a criterion for the existence of exceptional classes in $H_\et^{2r}(A_{\Fqbar}, \QQ_\ell(r))$ for geometrically simple abelian varieties $A$ satisfying {\cond}. Our result is given in terms of the weighted permutation representation $\rho = (w, G)$ associated to $A$. To this end, we make the following definition.

\begin{defn}
  \label{defn:exceptional-wpr}
  For sets $T^+, T^{-} \subset \{\decor{1}, \dots \decor{g}\}$ we write
  \begin{equation}
    \label{eq:exceptional-thing}
    \exceptional{\rho}{T^+}{T^-} \colonequals \sum_{\decor{i} \in T^+}\Phi_{\rho}(\decor{i}) - \sum_{\decor{j} \in T^-}\Phi_{\rho}(\decor{j}).
  \end{equation}
  We say that a geometrically simple weighted permutation representation $\rho = (w,G)$ is \cdef{exceptional} if there exist disjoint sets $T^+, T^- \subseteq \{\decor{1},\dots,\g\}$ such that $|T^+| + |T^-|$ is even, $T^+ \sqcup T^{-}$ is nonempty, and
  \begin{equation}
    \label{eq:exceptional-wpr-cond}
    \exceptional{\rho}{T^+}{T^-} = \frac{|T^+| - |T^-|}{2} \sum_{\sigma \in G} \sigma.
  \end{equation}
\end{defn}

We remark that \eqref{eq:exceptional-wpr-cond} is equivalent to the statement that for all $\sigma \in G$, we have
\begin{equation}
  \label{eqn:exceptional-criterion}
  \frac{|T^+| - |T^-|}{2} = \sum_{\i \in T^+}w(\sigma^{-1}(\i)) - \sum_{\j \in T^-}w(\sigma^{-1}(\j)).
\end{equation}

\begin{lemma}
  \label{lem:nonempty-Splus-Sminus}
  Let $\rho = (w,G)$ be a geometrically simple exceptional weighted permutation representation and let $T^+,T^- \subset \{ \1, \dots \g\}$ be the sets in \Cref{defn:exceptional-wpr} realizing the exceptionality of $\rho$. Then $T^+$ and $T^-$ are both nonempty.    
\end{lemma}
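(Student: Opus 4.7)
I will argue by contradiction, assuming $T^- = \emptyset$; the case $T^+ = \emptyset$ is entirely symmetric once the roles of $T^+$ and $T^-$ are exchanged in \eqref{eq:exceptional-wpr-cond}. Under this assumption $T^+$ is a nonempty subset of $\{\1, \dots, \g\}$ of even cardinality, and the exceptionality condition \eqref{eq:exceptional-wpr-cond} together with \Cref{defn:Phi} translates, coefficient-wise in $\QQ[G]$, into
\[
  \sum_{i \in T^+} w(\sigma^{-1}(i)) \;=\; \tfrac{|T^+|}{2} \qquad \text{for every } \sigma \in G.
\]

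Taking $\sigma = 1$, the bound $w(i) \leq 1/2$ for $i \in \{\1, \dots, \g\}$ (a consequence of \Cref{def:slope-function}) forces $w(i) = 1/2$ for every $i \in T^+$. Monotonicity of $w$ on $\{\1, \dots, \g\}$ then propagates this to $w(j) = 1/2$ for every $j \geq \min T^+$. Moreover, geometric simplicity of $\rho$ rules out the supersingular case (otherwise $\Phi_\rho(x) = \tfrac{1}{2}\sum_\sigma \sigma$ would be independent of $x \in X_{2g}$, contradicting the injectivity of $\Phi_\rho$ on $X_{2g}$). Hence some index in $\{\1, \dots, \g\}$ has weight strictly less than $1/2$, so $\1 \notin T^+$ and $w(\1) < 1/2$.

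Next, by transitivity of $G$ I choose $\sigma \in G$ with $\sigma(\1) = i_* \in T^+$. Because $\sigma \in W_{2g}$ preserves the partition of $X_{2g}$ into conjugate pairs, $\sigma(\bar\1) = \bar{i_*} \notin T^+$. Setting $u(x) = w(x) - 1/2$ (odd under conjugation and satisfying $u(x) \leq 0$ for $x \in \{\1, \dots, \g\}$), the coefficient of $\sigma$ in the condition above rearranges to $\sum_{x \in \sigma^{-1}(T^+)}u(x) = 0$. Since $\sigma^{-1}(T^+)$ contains at most one element from each conjugate pair, this further rewrites as a balance $\sum_{y \in \tilde A_\sigma}u(y) = \sum_{y \in \tilde B_\sigma}u(y)$ between disjoint subsets $\tilde A_\sigma, \tilde B_\sigma \subseteq \{\1, \dots, \g\}$ that partition the pair-projection of $\sigma^{-1}(T^+)$, with $\1 \in \tilde A_\sigma$. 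Since $u(\1) < 0$, the right-hand sum is strictly negative, so $\tilde B_\sigma$ meets $L \colonequals \{j \in \{\1, \dots, \g\} : w(j) < 1/2\}$ at some $y \neq \1$, giving $|L| \geq 2$.

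The final step will be to iterate the preceding analysis with each $y \in L$ playing the role of $\1$, exploiting transitivity of $G$ together with the pair-preserving structure of $W_{2g}$ to propagate the local balance relations into a global combinatorial restriction. My plan is to show that $L$ must exhaust $\{\1, \dots, \g\}$, contradicting the nonemptyness of $T^+ \subseteq \{\1, \dots, \g\} \setminus L$. The main obstacle is this final closure argument, which relies on the precise combinatorics of the $G$-action on $X_{2g}$ and on the pair structure of $W_{2g}$.
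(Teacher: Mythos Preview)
Your argument is explicitly left unfinished: you flag the ``final closure argument'' as the main obstacle and only sketch a plan (iterate the balance relations to force $L = \{\1,\dots,\g\}$). That plan is not clearly viable. From the identity $\sum_{y \in \tilde A_\sigma}u(y) = \sum_{y \in \tilde B_\sigma}u(y)$ you can, for each starting point $y_0 \in L$, locate some other element of $L$ on the opposite side, but there is no mechanism preventing these elements from cycling among a fixed small subset --- you have no induction that forces $|L|$ to grow beyond $2$. More fundamentally, you invoke geometric simplicity only to rule out the supersingular case, whereas it carries much more: injectivity of $\Phi_\rho$ on $X_{2g}$ is a strong constraint you never exploit. (For instance, when $|T^+| = 2$, say $T^+ = \{\i_1,\i_2\}$, the exceptionality relation combined with $\Phi_\rho(\i_1) + \Phi_\rho(\bar\i_1) = \sum_\sigma \sigma$ immediately yields $\Phi_\rho(\i_2) = \Phi_\rho(\bar\i_1)$, directly contradicting injectivity --- an argument of a quite different shape from your balance relations.)

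The paper's proof, by contrast, is a single comparison rather than an iteration: after noting that $\rho$ is not supersingular it juxtaposes the identity coefficient $\sum_{\i \in T^+} w(\i)$ with the coefficient at complex conjugation $\sum_{\i \in T^+} w(\bar\i)$ in \eqref{eqn:exceptional-criterion}, asserts the strict inequality between them, and reads off the contradiction with both sides equalling $|T^+|/2$. Your first paragraph already contains the germ of this (taking $\sigma = 1$ gives $\sum_{\i\in T^+} w(\i) = |T^+|/2$, forcing each $w(\i) = 1/2$ on $T^+$), but you then head off toward transitivity and an open-ended combinatorial iteration rather than exploiting the single distinguished element $\iota \in G$.
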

\begin{proof}
By symmetry, it suffices to show that $T^-$ is nonempty. Suppose for sake of contradiction that $T^-$ is empty. Note that $\rho$ is not supersingular (i.e., $w$ is not the constant function taking value $1/2$) since it is geometrically simple and $g > 1$. Taking $\sigma \in G$ to be the complex conjugation element $(\1\bar{\1})\dots(\g\bar{\g})$, we see that 
\[
    \sum_{\i \in T^+}w(\i) - \sum_{\j \in T^-}w(\j) = \sum_{\i \in T^+}w(\i) < \sum_{\i \in T^+}w(\overline{\i}) = \sum_{\i \in T^+}w(\sigma^{-1}(\i)) -  \sum_{\j \in T^-}w(\sigma^{-1}(\j)).
\]
However, \eqref{eqn:exceptional-criterion} implies that the left hand side equals the right hand side, a contradiction.
\end{proof}

Before stating \Cref{thm:main-tate-thm}, we rephrase the condition of a weighted permutation representation arising from an abelian variety being exceptional in terms of a criterion on the normalized Frobenius eigenvalues.

\begin{lemma}
  \label{lemma:wpr-exceptional-lambdas}
  Let $A$ be a geometrically simple abelian variety satisfying {\cond}. Fix an indexing $\mathcal{I}$ of the Frobenius eigenvalues and write $\lambda_\i = \pi_\i / \sqrt{q}$ for the corresponding normalized Frobenius eigenvalues. The weighted permutation representation $\rho$ associated to the pair $(A, \mathcal{I})$ is exceptional if and only if there exists a multiplicative relation
  \begin{equation}
    \label{eq:lam-rel}
    \prod_{\decor{i} \in T^+} \lambda_{\decor{i}} \prod_{\decor{j} \in T^-} \lambda_{\decor{j}}^{-1} = \zeta
  \end{equation}
  where $\zeta \in \Qbar$ is a root of unity, and $T^+, T^- \subset \{ \decor{1}, \ldots, \decor{g} \}$ satisfy the conditions of \Cref{defn:exceptional-wpr}.
\end{lemma}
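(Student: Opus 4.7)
The plan is to translate the equation \eqref{eq:exceptional-wpr-cond}, which lives in $\vecsp{\QQ}{G}$, into an equivalent statement about the factorizations of the principal ideals $\pi_{\decor{i}}\cO_L$, and then to pass between that ideal identity and a multiplicative relation of normalized Frobenius eigenvalues using Kronecker's theorem. Each step of the translation will be an equivalence, so the same argument yields both directions of the iff.

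First I would multiply \eqref{eq:exceptional-wpr-cond} by $k = v_{\dpr}(q)$ and apply \Cref{prop:div-infl} to rewrite the left hand side as $\inf\bigl(\sum_{\i \in T^+}\div_A(\i) - \sum_{\j \in T^-}\div_A(\j)\bigr)$. Since all primes of $L$ above $p$ are Galois conjugate, $v_\mfP(q) = k$ for every $\mfP \in \cP_L$, and the right hand side of \eqref{eq:exceptional-wpr-cond} multiplied by $k$ equals $\frac{|T^+|-|T^-|}{2}\,\inf\bigl(\sum_{\mfP \in \cP_L} v_\mfP(q)\,\mfP\bigr)$ after using $\inf(\sum_\mfP \mfP) = \sum_{\sigma \in G}\sigma$. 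Since the inflation map $\vecsp{\QQ}{G/D} \to \vecsp{\QQ}{G}$ is injective, the exceptional condition on $\rho$ is therefore equivalent to the divisor identity
\[
  \sum_{\i \in T^+}\div_A(\i) - \sum_{\j \in T^-}\div_A(\j) \;=\; \tfrac{|T^+|-|T^-|}{2}\sum_{\mfP \in \cP_L} v_\mfP(q)\,\mfP
\]
in $\vecsp{\QQ}{\cP_L}$.

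Next, each $\pi_\i$ is an algebraic integer dividing $q$, so the support of the principal ideal $\pi_\i\cO_L$ is contained in $\cP_L$, and the divisor identity above is equivalent to the ideal identity
\[
  \prod_{\i \in T^+}\pi_\i\,\cO_L \;=\; q^{(|T^+|-|T^-|)/2}\prod_{\j \in T^-}\pi_\j\,\cO_L,
\]
with integer exponents thanks to the parity assumption that $|T^+|+|T^-|$ is even. Rearranging, this says that the element
\[
  u \colonequals \prod_{\i \in T^+}\pi_\i\prod_{\j \in T^-}\pi_\j^{-1}\cdot q^{-(|T^+|-|T^-|)/2} \;=\; \prod_{\i \in T^+}\lambda_\i\prod_{\j \in T^-}\lambda_\j^{-1}
\]
lies in $\cO_L^\times$. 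Because each $\lambda_\i$ has complex absolute value $1$ in every embedding of $L$ (as $\pi_\i$ is a $q$-Weil number), so does $u$, and Kronecker's theorem implies that $u$ is a root of unity, yielding \eqref{eq:lam-rel}. Conversely, starting from a multiplicative relation as in \eqref{eq:lam-rel} with $\zeta$ a root of unity (hence a unit in $\cO_L$), one reads off the same ideal identity, applies the divisor map to land in $\vecsp{\QQ}{\cP_L}$, inflates, and invokes \Cref{prop:div-infl} to recover the exceptional condition on $\rho$.

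The main subtlety is bookkeeping: carefully tracking the factor of $k$ between $\Phi_\rho$ and $\div_A$, and using the parity assumption $|T^+|+|T^-|$ even precisely to guarantee that $q^{(|T^+|-|T^-|)/2}$ is rational (indeed a $q$-power) so that the ideal identity makes sense with integer exponents. I note that the hypothesis that $\rho$ is geometrically simple plays no role in this equivalence by itself; it is used elsewhere in the surrounding results of \Cref{sec:an-expl-crit}.
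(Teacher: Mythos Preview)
Your argument is correct and follows essentially the same route as the paper: translate between \eqref{eq:exceptional-wpr-cond} and \eqref{eq:lam-rel} via \Cref{prop:div-infl}, the divisor map, and the inflation map. You spell out two points the paper leaves implicit, namely the injectivity of $\inf$ and the appeal to Kronecker's theorem to deduce that a unit of absolute value $1$ at every archimedean place is a root of unity, but these are exactly the details underpinning the paper's terse ``equivalent to'' at each step.
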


\begin{proof}
  Let $\pi_{\1}, \dots, \pi_{\g}, \bar{\pi}_{\g}, \dots, \bar{\pi}_{\1}$ be the Frobenius eigenvalues of $A$. A relation of the form \eqref{eq:lam-rel} is equivalent to 
  \[
    \prod_{\i \in T^+}\pi_\i \prod_{\j \in T^-}\pi_\j^{-1} = \zeta q^{(\lvert T^+ \rvert - \lvert T^- \rvert)/2}.
  \]

  Applying the divisor map, and noting that $q = \pi_{\i} \bar{\pi}_\i$, we see that \eqref{eq:lam-rel} is equivalent to
  \[
    \sum_{\i \in T^+}\div_A(\pi_\i) - \sum_{\j \in T^-}\div_A(\pi_\j) = \frac{\lvert T^+ \rvert - \lvert T^- \rvert}{2}\div_A(q).
  \]
  Now, composing with the inflation map and applying \Cref{prop:div-infl}, we see that \eqref{eq:lam-rel} is equivalent to 
  \[
     \sum_{\i \in T^+}\Phi_{\rho}(\i) - \sum_{\j \in T^-}\Phi_{\rho}(\j) = \frac{\lvert T^+ \rvert - \lvert T^- \rvert}{2}\sum_{\sigma \in G}\sigma, 
  \]
  as required.
\end{proof}

It may be the case that when applying \Cref{thm:main-tate-thm} for an explicit abelian variety $A$, checking the conditions on \Cref{lemma:wpr-exceptional-lambdas} is easier than checking those in \Cref{defn:exceptional-wpr}; see \Cref{remark:computations}. We stress, however, that phrasing \Cref{thm:main-tate-thm} in terms of the weighted permutation representation provides a very convenient tool for proving results about \emph{families} of abelian varieties (as in \Cref{thm:others-coros,thm:exceptions}). The proof of \Cref{thm:main-tate-thm} is given in \Cref{sec:proof-tate-thm}.

\begin{theorem}
  \label{thm:main-tate-thm} 
  Let $\ell\neq p$ be a prime number, and let $A/\Fq$ be a geometrically simple abelian variety satisfying {\cond} and giving rise to a weighted permutation $\rho$. If $\rho$ is not exceptional then for all $1 \leq r \leq g$ the $\QQ_\ell$-vector space $H_{\et}^{2r}(A_{\Fqbar}, \QQ_\ell(r))^{G_{\Fq}}$ is generated by intersections of divisor classes, and therefore the Tate conjecture holds for $A$.

  Conversely, if $\rho$ is exceptional, then $A$ has exceptional classes. Moreover, these classes occur in $H_{\et}^{2r}(A_{\Fqbar}, \QQ_\ell(r))^{G_{\Fq}}$ whenever $2r \geq |T^+| + |T^-|$ where $T^+,T^- \subset \{ \decor{1}, \dots, \decor{g} \}$ are as in \Cref{defn:exceptional-wpr}.
\end{theorem}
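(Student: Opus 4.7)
The plan is to translate the existence of exceptional Tate classes in $H_{\et}^{2r}(A_{\Fqbar}, \QQ_\ell(r))^{G_{\Fq}}$ into a combinatorial question about multiplicative relations among the Frobenius eigenvalues, in the tradition of Tankeev~\cite{Tankeev1984} and Zarhin~\cite{Zarhin91}, and then to reformulate this in the language of the weighted permutation representation $\rho$ via \Cref{prop:div-infl} and \Cref{lemma:wpr-exceptional-lambdas}. Under {\cond}, the polynomial $P_A$ is irreducible, so Frobenius acts semisimply on $H^1(A_{\Fqbar}, \QQ_\ell)$ with $2g$ distinct eigenvalues $\cR_A$; after extending scalars to $\overline{\QQ}_\ell$, the dimension of $H_{\et}^{2r}(r)^{G_{\Fq}}$ equals the number of subsets $S \subset \cR_A$ of size $2r$ with $\prod_{\pi \in S} \pi = q^r$. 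By Tate's theorem for divisors, $NS(A_{\Fqbar}) \otimes \QQ_\ell = H^2(1)^{G_{\Fq}}$ has dimension $g$; for geometrically simple $A$ satisfying {\cond} the only size-$2$ subsets with product $q$ are the $g$ conjugate pairs $\{\pi_i, \bar\pi_i\}$, so the divisor subalgebra $D_r$ in degree $2r$ is spanned by the $\binom{g}{r}$ subsets that are disjoint unions of $r$ conjugate pairs.

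The forward direction rests on a cancellation argument: any $S$ with $\prod_S \pi = q^r$ decomposes uniquely as $M \sqcup T^+ \sqcup \overline{T^-}$ where $M$ is the maximal union of matched conjugate pairs and $T^+, T^- \subset \{\1,\dots,\g\}$ are disjoint. Substituting $\bar\pi_j = q/\pi_j$ reduces the product condition on $S$ to $\prod_{T^+} \pi_i / \prod_{T^-} \pi_j = q^{(|T^+|-|T^-|)/2}$, which by \Cref{lemma:wpr-exceptional-lambdas} is precisely the exceptional condition on $\rho$ (with trivial root of unity). Thus if $\rho$ is not exceptional, the unmatched residual must be empty, so $S$ is of divisor-pair form, every $G_{\Fq}$-invariant class lies in $D_r$, and the Tate conjecture holds for $A$. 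For the converse, given an exceptional configuration $(T^+, T^-)$, \Cref{lemma:wpr-exceptional-lambdas} produces a multiplicative relation yielding an eigenvector for Frobenius in $H_{\et}^{|S|}(|S|/2) \otimes \overline{\QQ}_\ell$ (for $S = T^+ \sqcup \overline{T^-}$) with eigenvalue a root of unity $\zeta$. When $\zeta = 1$ this is immediately a $G_{\Fq}$-invariant exceptional class, not in $D_r$ because the residual unmatched pair is non-empty; multiplying by intersections of divisor classes extends this to every $2r \geq |T^+| + |T^-|$.

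The hardest part will be handling the case $\zeta \neq 1$ in the converse direction: a relation valid only up to a root of unity does not directly produce a $G_{\Fq}$-invariant class, since Frobenius then acts as $\zeta$ on the associated eigenvector. The plan is to pass to a finite extension $\Fq^n$ over which $\zeta$ becomes trivial, produce the exceptional class there, and then descend to $\Fq$ via either Galois averaging over the $G_{\Fq}$-orbit or the standard equivalence of Tate's conjecture for $A/\Fq$ and $A/\Fq^n$ (restriction of scalars). A careful accounting is required to verify that the descent preserves exceptionality (i.e., that the descended class does not accidentally lie in the divisor subalgebra) and that the class sits in the claimed cohomological degree. A secondary technical point is to verify rigorously that the cancellation in the forward direction accounts for all $G_{\Fq}$-invariant classes: specifically, one must argue that the $\binom{g}{r}$ subsets of divisor-pair form yield linearly independent classes in $D_r$ and that no further invariants exist beyond these together with the exceptional ones, so that the dimension comparison is an equality rather than only an inequality.
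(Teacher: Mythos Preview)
Your proposal is essentially the same strategy as the paper's: decompose $H^{2r}_{\et}\otimes\overline{\QQ}_\ell$ into Frobenius eigenlines indexed by size-$2r$ subsets $S\subset\cR_A$, identify the divisor subalgebra with the span of the $\binom{g}{r}$ conjugate-pair subsets, and reduce the presence of exceptional invariants to a multiplicative relation among normalized eigenvalues via \Cref{lemma:wpr-exceptional-lambdas}. Your forward direction is correct and matches the paper's argument.

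There are two structural differences worth noting. First, the paper lifts $A$ via Honda's theorem to a CM abelian variety in characteristic zero and uses the CM type to get the decomposition $H^1\otimes\overline{\QQ}_\ell\cong V\oplus\overline V$ as a $K_A$-module (\Cref{lemma:decomp}); this makes the identification of the divisor subalgebra (\Cref{lemma:wedge-time}) more transparent than your direct semisimplicity argument, but the two are equivalent for the purpose at hand. Second, and more importantly, the paper handles the root-of-unity obstruction \emph{at the outset} rather than at the end: by \Cref{lemma:isog-inv}\ref{enum:bc} one replaces $\Fq$ by a finite extension so that $\Gamma_A$ is torsion-free and $q$ is a square, after which any relation $\prod_{T^+}\lambda_i/\prod_{T^-}\lambda_j=\zeta$ produced by \Cref{lemma:wpr-exceptional-lambdas} automatically has $\zeta=1$ (since $\zeta\in\Gamma_A$). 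This sidesteps your end-stage descent entirely.

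Your instinct that the descent is delicate is exactly right: if $e_S$ is a $G_\kappa$-invariant eigenclass with Frobenius eigenvalue a primitive $n$-th root of unity $\zeta\neq 1$, then the Galois average $\sum_{j=0}^{n-1}\Frob^j(e_S)=(\sum_j\zeta^j)e_S=0$, so naive averaging fails. This is precisely the direction of \Cref{lemma:isog-inv}\ref{enum:bc} needed for the converse of the main theorem (exceptional over $\kappa\Rightarrow$ exceptional over $\Fq$), and it is the direction \emph{not} covered by the Totaro-style averaging written in the paper. So your ``hardest part'' is the same point the paper packages into the asserted two-sided equivalence of \Cref{lemma:isog-inv}\ref{enum:bc}; if you want a fully self-contained argument you will need to justify that direction separately rather than rely on descent of a single eigenclass.
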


\begin{remark}
  \label{remark:computations}
  In principle, on input of an abelian variety $A/\Fq$ one can check the conditions of \Cref{thm:main-tate-thm} on a computer. In practice, the bottleneck in applying \Cref{thm:main-tate-thm} directly is computing the Galois group $G$ and its action on $p$-adic approximations to the roots of $P_A(T)$ (though one may not need to compute the entire splitting field). To be certain that the criteria in \Cref{thm:main-tate-thm} are satisfied, up to numerical tolerance, one can apply the LLL algorithm to complex approximations to the normalized Frobenius eigenvalues $\lambda \colonequals \pi / \sqrt{q}$ with $\pi \in \cR_A$ to determine all relations of the form given in \Cref{lemma:wpr-exceptional-lambdas} (a similar approach is outlined in \cite[Section~3.8]{DupuyKedlayaRoeVincent22} for computing angle ranks). We provide a \texttt{Magma} implementation of this numerical algorithm in \cite{OurElectronic}.
\end{remark}

\subsection{Applications of \texorpdfstring{\Cref{thm:main-tate-thm}}{Theorem 4.4}}
\label{sec:applications-of-thm}
Before proceeding with the proof of \Cref{thm:main-tate-thm}, we deduce \Cref{thm:others-coros,thm:exceptions} from the introduction. \Cref{thm:others-coros} includes results of Tankeev~\cite[Theorem~1.2]{Tankeev1984}, Zarhin~\cite[Theorem~2.7.3]{Zarhin91}, and Lenstra--Zarhin~\cite[Main Theorem~5.5]{LenstraZarhin1993}. Our arguments are heavily influenced by theirs, and the reader is encouraged to compare both approaches.

For $\sigma \in G$ and subset $\Lambda \subset X_{2g}$ we write
\begin{equation}
  \label{eq:gamma-sigma}
  \Gamma_{\sigma}(\Lambda) = \{\i \in \{\1,\dots,\g\} : \sigma^{-1}(\decor{i}) \in \Lambda \}.
\end{equation}

\begin{proof}[Proof of \Cref{thm:others-coros}\ref{thm:tankeev-p}]
  When $g = 2$ the claim was proved by Tate~\cite{Tate1966}, so we may assume that $A/\Fq$ is a geometrically simple abelian variety of odd prime dimension $g$ satisfying {\cond}. Following \cite[Corollary~3.6]{DupuyKedlayaZureick-Brown22}, fix an indexing and consider the map $\Phi_\rho^+ \colon \vecsp{\QQ}{X_{2g}}^+ \to \vecsp{\QQ}{G}^+$ as in \eqref{eq:Phi+}. By \Cref{prop:refined-gen-tankeev} the 
  dimension of the kernel of $\Phi_\rho^+$ is either $0$ or $1$.
  
  If $\dim(\ker \Phi_\rho^+) = 0$ then $\rho$ is not exceptional (in fact, this is the case when $A$ has maximal angle rank and by \cite[Theorem 3.4.3]{Zarhin94} no power of $A$ has an exceptional class).
  
  It remains to consider the case where $\dim(\ker \Phi_{\rho}^+) = 1$. Suppose in search of contradiction that $\rho$ is exceptional and choose sets $T^+,T^-$ satisfying \Cref{defn:exceptional-wpr}. Since $\ker \Phi_\rho^+$ has dimension $1$ we have an equality elements of $\vecsp{\QQ}{X_{2g}}^+$ given by
  \[
    \sigma \left( \sum_{\i \in T^+}\i - \sum_{\j \in T^-}\overline{\j} \right) = \pm \sum_{\i \in T^+}\i - \sum_{\j \in T^-}\overline{\j}.
  \]
  for all $\sigma \in G$.
  
  This implies that $G^+$ fixes the (nonempty) set $T^+ \sqcup T^-$. Because $G$ acts transitively on $X_{2g}$, this implies that $T^+ \sqcup T^- = \{\1,\dots,\g\}$. This is a contradiction since $g$ is odd and the cardinality of $T^+ \sqcup T^-$ is even. Therefore $\rho$ is not exceptional and the claim follows from \Cref{thm:main-tate-thm}.
\end{proof}

\begin{proof}[Proof of \Cref{thm:others-coros}\ref{thm:zarhin}--\ref{thm:n-trans}]
  We prove \Cref{thm:others-coros}\ref{thm:n-trans}, noting that \ref{thm:zarhin} and \ref{thm:lenstra-zarhin} follow from the case when $d = 1$. Let $\mathfrak{s} = \{\g - \decor{n}, \dots, \g\}$, write $\overline{\mathfrak{s}} \subseteq \{\overline{\g - \decor{n}},\dots,\overline{\g}\}$ and write $\widetilde{\mathfrak{s}} = \mathfrak{s} \sqcup \overline{\mathfrak{s}}$. Let $w$ be the associated weight function given by
  \begin{equation*}
    w(x) =
    \begin{cases}
      \tfrac{1}{2}  & \text{if $x \in \widetilde{\mathfrak{s}}$, and} \\
      \in \ZZ_{(2)} & \text{otherwise.}
    \end{cases}
  \end{equation*}  
  Consider the weighted permutation representation $\rho = (w, G)$, where $G \subset W_{2g}$ is a transitive subgroup containing complex conjugation.
  
  As in \Cref{remark:notes-on-others-coros}\ref{i:note-less-restricitive}, suppose that for any two $d$-element subsets $\Gamma, \Gamma' \subset \{\1, \dots, \g\}$ there exists $\sigma \in G^+$ such that $\sigma(\Gamma) = \Gamma'$ (this is a strictly weaker condition than $d$-transitivity). Indeed, taking complements, the same is true for any two $n$-element subsets $\Gamma, \Gamma' \subset \{\1, \dots, \g\}$.

  By \Cref{thm:main-tate-thm}, it suffices to show that $\rho$ is not exceptional. In particular it suffices to show that there do not exist $T^+, T^- \subset \{ \decor{1}, \dots \decor{g} \}$ as in \Cref{defn:exceptional-wpr}. Suppose for the sake of contradiction that such sets $T^+$ and $T^-$ do exist and let $T = T^+ \sqcup T^-$.

  Note that $|T|$ is even and nonzero by assumption. For all $\sigma \in G$ the coefficient of $\sigma$ in $\exceptional{\rho}{T^+}{T^-}$ is equivalent modulo $\ZZ_{(2)}$ to
  \begin{equation}
    \label{eqn:Z-breaker}
    \sum_{\decor{i} \in T^+} w(\sigma^{-1}(\decor{i})) - \sum_{\decor{j} \in T^-} w(\sigma^{-1}(\decor{j})) \equiv
     \sum_{\i \in \Gamma_\sigma(\widetilde{\mathfrak{s}}) \cap T} \frac{1}{2}\; .
  \end{equation}

  We may assume that $g > n$, otherwise $A$ is supersingular. Let $\Gamma_\sigma(\widetilde{\mathfrak{s}})$ be as defined in \eqref{eq:gamma-sigma}. Since the expression in \eqref{eqn:Z-breaker} is an integer, for  all $\sigma \in G$ we have $| \Gamma_{\sigma}(\widetilde{\mathfrak{s}}) \cap T | \in 2\ZZ$. By the assumption on $G^+$ every $n$-element subset of $\{\1,\dots,\g\}$ arises as $\Gamma_{\sigma}(\widetilde{\mathfrak{s}})$ for some $\sigma \in G$. Combining these two statements we see that any $n$-element subset $\Gamma \subset \{\1,\dots,\g\}$ must have $| \Gamma \cap T | \in 2\ZZ$.

  First suppose that $n$ is odd. If $|T| > n$ then take $\Gamma \subset T$ to be any $n$-element subset, in which case $|\Gamma \cap T|$ is odd, a contradiction. Therefore, since $|T|$ is even, we have $|T| < n < g$. Because $T$ is nonempty we may choose an element $\i \in T$ and a subset $\Gamma' \subset \{1,\dots,\g\} \setminus T$ of cardinality $|\Gamma'| = n+1-|T|$. Taking $\Gamma = \Gamma' \cup T \setminus \{\i\}$ we have $| \Gamma \cap T| = |T| - 1$ is odd, again a contradiction.

  It remains to consider the case when $n$ is even and $g$ is odd. Since $g$ is odd and $T$ is nonempty and of even cardinality we may choose a $n$-element subset $\Upsilon \subset \{\1, \dots, \g\}$ such that
  $\Upsilon$ is not a subset of $T$ and $\Upsilon \cap T \neq \emptyset$. Choose a distinct $n$-element set $\Upsilon' \neq \Upsilon$ and take $\i \in \Upsilon' \setminus \Upsilon$. If $\i \in T$, then choose an element $\j \in \Upsilon \setminus T$ and consider the $n$-element set $\Gamma = \{\i\} \cup \Upsilon \setminus \{\j\}$. If $\i \notin T$, then choose an element $\j \in \Upsilon \cap T$, and consider the $n$-element set $\Gamma = \{\i\} \cup \Upsilon \setminus \{\j\}$. In either case $| \Gamma \cap T |$ is odd, a contradiction.

  We now prove \ref{thm:n-trans} when $d = 2$ and $g$ is odd, but under the weaker assumption that $G^+$ is primitive, thus proving \Cref{remark:notes-on-others-coros}\ref{i:note-special-case}. In this case let $\Gamma_\sigma'(\widetilde{\mathfrak{s}}) = \Gamma_{\sigma}(\widetilde{\mathfrak{s}})$ if $n = d$, respectively $\Gamma_\sigma'(\widetilde{\mathfrak{s}}) = \{\1, \dots \g\} \setminus \Gamma_{\sigma}(\widetilde{\mathfrak{s}})$ if $n = g - d$. Then  $\lvert \Gamma_{\sigma}'(\widetilde{\mathfrak{s}})\rvert = 2$, and thus $\lvert \Gamma_{\sigma}'(\widetilde{\mathfrak{s}}) \cap T\rvert$ is either $0$ or $2$ (since $g$ is odd and \eqref{eqn:Z-breaker} is integral). Therefore $\Gamma_{\sigma}'(\widetilde{\mathfrak{s}})$ nontrivially intersects $T$ if and only if it is contained in $T$. Now, consider a graph $\Delta$ whose vertex set is $\{\1,\dots,\g\}$ and such that there is an edge between $\i,\j$ if and only if $\{\i,\j\} = \Gamma_{\sigma}'(\widetilde{\mathfrak{s}})$ for some $\sigma \in G$. Then $T$ is a union of vertex sets of connected components of $\Delta$. The graph $\Delta$ admits an action of $G^+$ so, because $G^+$ is primitive, $\Delta$ must be connected. Therefore $T = \{\1,\dots,\g\}$, which is a contradiction when $g$ is odd.
\end{proof}

\begin{proof}[Proof of \Cref{thm:others-coros}\ref{thm:dvarepsilon-trans}]
  Let $\mathfrak{s} \subset \{\1,\dots,\g\}$ and $\overbar{\mathfrak{s}} \subset \{\bar\1, \dots \bar\g\}$ be defined so that the weight function associated to $A$ is
  \begin{align*}
    w(x) =
    \begin{cases}
      n/\varepsilon                 & \text{if $x \in \mathfrak{s}$,}   \\
      (\varepsilon - n)/\varepsilon & \text{if $x \in \overbar{\mathfrak{s}}$,} \\
      \in \ZZ_{(\varepsilon)}       & \text{otherwise.}
    \end{cases}
  \end{align*}
  
  As before, consider a geometrically simple weighted permutation representation $\rho = (w, G)$ where $G \subseteq W_{2g}$ contains complex conjugation. By \Cref{thm:main-tate-thm}, it suffices to show that $\rho$ is not exceptional. In particular it suffices to show that there do not exist $T^+, T^- \subset \{ \decor{1}, \dots \decor{g} \}$ as in \Cref{defn:exceptional-wpr}. Suppose for the sake of contradiction that they do exist. 

  Let $T = T^+ \sqcup T^-$ and note that $|T|$ is even and nonzero by assumption. For all $\sigma \in G$ the coefficient of $\sigma$ in $\exceptional{\rho}{T^+}{T^-}$ is equal modulo $\ZZ_{(\varepsilon)}$ to 
  \begin{equation}
    \label{eqn:breaking-n-eps}
    \sum_{\i \in \Gamma_{\sigma}(\mathfrak{s}) \cap T^+} \frac{n}{\varepsilon}  + \sum_{\i \in \Gamma_{\sigma}(\overline{\mathfrak{s}}) \cap T^+} \frac{\varepsilon - n}{\varepsilon} - \Bigg(\sum_{\j \in \Gamma_{\sigma}(\mathfrak{s}) \cap T^-} \frac{n}{\varepsilon}  + \sum\limits_{\j \in \Gamma_{\sigma}(\overline{\mathfrak{s}}) \cap T^-} \frac{\varepsilon - n}{\varepsilon} \Bigg) .
  \end{equation}

  The expression in \eqref{eqn:breaking-n-eps} must be integral. In particular, because $\gcd(n,\varepsilon) = 1$, for all $\sigma \in G$ we have
  \begin{equation}
    \label{eqn:equiv}
    | \Gamma_{\sigma}(\mathfrak{s}) \cap T^+ | - | \Gamma_{\sigma}(\overbar{\mathfrak{s}}) \cap T^+ | - | \Gamma_{\sigma}(\mathfrak{s}) \cap T^- | + | \Gamma_{\sigma}(\overbar{\mathfrak{s}}) \cap T^- | \equiv 0 \pmod{\varepsilon}.
  \end{equation}
  
  Let $C = \ker(G \rightarrow G^+)$. If $|C| > 2$, then the result follows from \cite[Theorem~1.7]{DupuyKedlayaZureick-Brown22}. Thus we may assume that $C$ is the subgroup of order two generated by the ``complex conjugation'' element $(\1\bar{\1})\dots(\g\bar{\g}) \in G$. Consider the exact sequence
  \[
    0 \rightarrow C \rightarrow G \rightarrow G^+ \rightarrow 0\; .
  \]
  Because $C$ is contained in the center of $G$, the action of $G^+$ on $C$ is trivial. Therefore we have isomorphisms $G \cong C \rtimes G^+ \cong C \times G^+$ and the exact sequence splits. Choose a splitting $G^+ \xhookrightarrow{} G$.

  The data of the lift $G^+ \xhookrightarrow{} G$ provides a labelling $\{\i^{\vee},\i^{\wedge} \} = \{\i, \overline{\i}\}$, so that the action of $G^+$ preserves the partition
  \[
    X_{2g}  = \{\1^\vee,\dots,\g^\vee\} \sqcup \{\1^\wedge,\dots,\g^\wedge\}.
  \]
  Define a function $\sgn \colon \{\1,\dots,\g\} \rightarrow \{\pm 1\}$ by
  \[
    \sgn(\i) =
    \begin{cases}
      1 & \text{if $\i = \i^\vee$} \\
      -1 & \text{if $\i = \i^\wedge$}
    \end{cases}
  \]
  
  By assumption $G^+$ is $d\varepsilon$-transitive and therefore, since \eqref{eqn:equiv} is integral, for any $d\varepsilon$-element subset $\Gamma \subset \{\1,\dots,\g\}$ and any bijection $\alpha \colon \Gamma \rightarrow \mathfrak{s}$, we have

  \begin{equation}
    \label{eqn:equiv-2}
    |\Gamma^+ \cap T^+| + |\Gamma^- \cap T^-| \equiv |\Gamma^- \cap T^+| + |\Gamma^+ \cap T^-| \pmod{\varepsilon}
  \end{equation}
  where
  \begin{equation*}
      \Gamma^+ = \{\i \in \Gamma  :  \sgn(\i) = \sgn(\alpha(\i))\}
      \quad \text{and} \quad
      \Gamma^- = \{\i \in \Gamma  :  \sgn(\i) \neq \sgn(\alpha(\i))\}.
  \end{equation*}
  We now break into two cases, depending on the cardinality of $\sgn(\mathfrak{s})$.\\

  \noindent \textbf{Case 1:} $| \sgn(\mathfrak{s}) | =2$. \\
  Note that by \Cref{lem:nonempty-Splus-Sminus}, both $T^+$ and $T^-$ are nonempty. Choose $\i \in T^+$ and $\j \in T^-$  and choose $\i',\j' \in \mathfrak{s}$ with $\sgn(\i') \neq \sgn(\j')$. Let $\Gamma$ be a $d\varepsilon$-element set containing both $\i$ and $\j$ and consider two bijections $\alpha,\beta \colon \Gamma \rightarrow \mathfrak{s}$ such that
  \[
    \alpha(\i) = \i', \alpha(\j) = \j' \qquad \text{and} \qquad  \beta(\i) = \j', \beta(\j) = \i'.
  \]
  If \eqref{eqn:equiv-2} is satisfied for $\alpha$, it cannot be satisfied for $\beta$, a contradiction. \\

  \noindent \textbf{Case 2:} $|\sgn(\mathfrak{s})|=1$. \\
  Set $\Sigma \coloneqq \{1,\dots,\g\}$ and define
  \[
    \Sigma_1 \coloneqq \big( T^+ \cap \sgn^{-1}(1) \big) \sqcup \big( T^- \cap \sgn^{-1}(-1) \big),
  \]
  \[
    \Sigma_2 \coloneqq \big( T^+ \cap \sgn^{-1}(-1) \big) \sqcup \big( T^- \cap \sgn^{-1}(1) \big),
  \]
  and $\Sigma_3 \coloneqq \Sigma \setminus (\Sigma_1 \sqcup \Sigma_2)$ so that $\Sigma = \Sigma_1 \sqcup \Sigma_2 \sqcup \Sigma_3$.
  
  In this case, \eqref{eqn:equiv-2} is equivalent to the statement that every subset $\Gamma \subseteq \Sigma$ of cardinality $d\varepsilon$ satisfies:
  \begin{equation}
    \label{eqn:equiv-3}
    \lvert \Sigma_1 \cap \Gamma \rvert \equiv \lvert \Sigma_2 \cap \Gamma \rvert \pmod{\varepsilon}.
  \end{equation}

  We claim that $\Sigma = \Sigma_a$ for some $a \in \{1,2,3\}$. Indeed, suppose that $\Sigma \neq \Sigma_a$ for any $a$ and take a subset $\Gamma \subset \Sigma$ of cardinality $d\varepsilon$ not contained in any $\Sigma_i$. Choose $\i \notin \Gamma$ and $\j \in \Gamma$ which are contained in distinct components of the partition $\Sigma = \Sigma_1 \sqcup \Sigma_2 \sqcup \Sigma_3$. Taking $\Gamma' \coloneqq \{\i\} \cup (\Gamma \setminus \{\j\})$ and noting that $| \Sigma_1 \cap \Gamma | \equiv | \Sigma_2 \cap \Gamma| \pmod{\varepsilon}$, we have $| \Sigma_1 \cap \Gamma' | \not \equiv | \Sigma_2 \cap \Gamma'| \pmod{\varepsilon}$, contradicting the assumption that $\Sigma \neq \Sigma_a$ for some $a \in \{1,2,3\}$.

  If $\Sigma = \Sigma_{1}$ or $\Sigma_2$, then $T = \{\1,\dots,\g\}$ contradicting the assumption that $g$ is odd and $T$ has even cardinality. If $\Sigma = \Sigma_3$ then $T$ is empty, again a contradiction.
\end{proof}

\section{The proof of \texorpdfstring{\Cref{thm:main-tate-thm}}{??}}
\label{sec:proof-tate-thm}
Let $A$ be an abelian variety satisfying {\cond} and let $\agrp{A} \subset \Qbar^\times$ be the subgroup generated by the normalized Frobenius eigenvalues $\pi / \sqrt{q}$ where $\pi$ ranges over those $\pi \in \cR_A$. 

Our approach and presentation follows that taken by Tankeev~\cite{Tankeev1984} in proving the Tate conjecture for geometrically simple primefolds over $\Fq$. An analogous approach is taken by Zarhin~\cite{Zarhin91} and Lenstra--Zarhin~\cite[Section~5]{LenstraZarhin1993} which is given in the dual language of multilinear algebra on the Tate module $T_\ell(A) \otimes \QQ$.  Many of these ideas can also be found in a motivic language in Milne's work~\cite[Theorem~7.1]{Milne99} proving that the Hodge conjecture for CM abelian varieties over $\CC$ implies the Tate conjecture for abelian varieties over $\Fq$.

\begin{lemma}
  \label{lemma:isog-inv}
  The existence of exceptional classes is invariant under isogeny and passage to finite extensions. More precisely, let $B \to A$ be an isogenous abelian variety, and let $\kappa/\Fq$ be a finite extension. We have the following:
  \begin{enumerate}[label=(\roman*)]
  \item \label{enum:isog}
    \textnormal{(Isogeny invariance).}
    The space $H_\et^{2r}(A_{\Fqbar}, \QQ_\ell(r))^{G_{\Fq}}$ contains an exceptional class if and only if $H_\et^{2r}(B_{\Fqbar}, \QQ_\ell(r))^{G_{\Fq}}$ contains an exceptional class.
  \item \label{enum:bc}
    \textnormal{(Base change invariance).}
    The space $H_\et^{2r}(A_{\Fqbar}, \QQ_\ell(r))^{G_{\Fq}}$ contains an exceptional class if and only if $H_\et^{2r}(A_{\Fqbar}, \QQ_\ell(r))^{G_{\kappa}}$ contains an exceptional class.
  \end{enumerate}
\end{lemma}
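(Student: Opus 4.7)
The plan is to handle the two parts by different techniques: \ref{enum:isog} by direct functoriality, and \ref{enum:bc} by a Frobenius-eigenspace analysis combined with Tate's theorem for divisors.

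For \ref{enum:isog}, an isogeny $f\colon B \to A$ induces, by functoriality of étale cohomology, a pullback $f^*\colon H^{2r}_{\et}(A_{\Fqbar}, \QQ_\ell(r)) \to H^{2r}_{\et}(B_{\Fqbar}, \QQ_\ell(r))$ which is $G_{\Fq}$-equivariant. The push--pull identity $f_* f^* = \deg(f)\cdot \id$ (with $\deg(f)$ invertible in $\QQ_\ell$), combined with the fact that both sides have $\QQ_\ell$-dimension $\binom{2g}{2r}$, shows $f^*$ is an isomorphism. Since $f^*$ sends the cycle class of a divisor $D\subseteq A$ to that of $f^{-1}(D)\subseteq B$ and commutes with cup products, it carries the span of intersections of divisor classes on $A$ isomorphically onto that on $B$, and by equivariance restricts to an isomorphism on $G_{\Fq}$-invariants. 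The quotients parameterising exceptional classes therefore agree.

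For \ref{enum:bc}, set $V = H^{2r}_{\et}(A_{\Fqbar}, \QQ_\ell(r))$ and $n = [\kappa:\Fq]$, and let $D_{\Fq}\subseteq V^{G_{\Fq}}$, $D_\kappa \subseteq V^{G_\kappa}$ denote the subspaces generated by intersections of $\Fq$-rational (respectively $\kappa$-rational) divisor classes. Under {\cond} the characteristic polynomial $P_A(T)$ is irreducible, so the Frobenius acts semisimply on $V$, yielding an eigenspace decomposition $V = \bigoplus_\chi V_\chi$ with $V^{G_{\Fq}} = V_1$ and $V^{G_\kappa} = \bigoplus_{\chi^n = 1} V_\chi$. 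Tate's theorem for divisors applied over both $\Fq$ and $\kappa$, together with the compatibility of the cup product with this grading, similarly produces $D_\kappa = \bigoplus_{\chi^n = 1} D_\chi$ with $D_\chi \subseteq V_\chi$. The forward direction of \ref{enum:bc} then reduces to checking the equality $D_1 = D_{\Fq}$, which follows by a Galois descent argument using the surjectivity of the trace $\Tr\colon \operatorname{NS}(A_\kappa)_{\QQ}\twoheadrightarrow \operatorname{NS}(A)_{\QQ}$ together with a symmetrisation of cup products, exploiting that under {\cond} the Néron--Severi group is tightly controlled by $K_A^+$.

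The main obstacle is the converse direction of \ref{enum:bc}: one must show that the existence of a nonzero element in $V_\chi / D_\chi$ for some $\chi$ with $\chi^n = 1$ forces $V_1 / D_{\Fq} \neq 0$. Following Tankeev~\cite{Tankeev1984} and Zarhin~\cite{Zarhin91}, the strategy is to encode such a failure as a multiplicative relation $\prod_{\i \in T^+} \lambda_{\i} \prod_{\j \in T^-} \lambda_{\j}^{-1} = \zeta$ on normalized Frobenius eigenvalues with $\zeta$ a root of unity, in the spirit of \Cref{lemma:wpr-exceptional-lambdas}. Since such a relation depends only on the algebraic numbers $\lambda_{\i}$ (and not on the base field), it already witnesses exceptionality of the weighted permutation representation of $A/\Fq$; one then constructs an exceptional class in $V_1 \setminus D_{\Fq}$ directly from the relation by the explicit cycle construction used later in the proof of \Cref{thm:main-tate-thm}. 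The delicate point is carrying out this construction at the level of cohomology without logically invoking \Cref{thm:main-tate-thm}, so that the present lemma can be used unproblematically in its proof.
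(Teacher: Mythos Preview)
Your argument for \ref{enum:isog} matches the paper's: an isogeny induces a $G_{\Fq}$-equivariant isomorphism on cohomology compatible with pullback of cycles, hence preserving the span of divisor intersections.

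For \ref{enum:bc} your route diverges. The paper does not decompose into Frobenius eigenspaces; it handles the implication ``no exceptional classes over $\kappa$ $\Rightarrow$ none over $\Fq$'' by a direct trace (following \cite{Totaro}): if $\xi \in H^{2r}_{\et}(A_{\Fqbar},\QQ_\ell(r))^{G_{\Fq}}$ lies in the span of intersections of $\kappa$-rational divisors, say $\xi = c^r(\alpha)$, then $\boldsymbol\alpha \colonequals \sum_{\sigma \in \Gal(\kappa/\Fq)} \sigma(\alpha)$ is an $\Fq$-rational intersection of divisors with $c^r(\boldsymbol\alpha) = [\kappa:\Fq]\,\xi$. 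Under {\cond} this step is clean because $\End^0(A_\kappa) = \End^0(A) = K_A$, so $\operatorname{NS}(A_\kappa)_\QQ = \operatorname{NS}(A)_\QQ$ and every $\kappa$-rational divisor class is already $\Fq$-rational; your equality ``$D_1 = D_{\Fq}$'' is then immediate, without the symmetrisation manoeuvre you sketch.

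The genuine gap is exactly the one you flag. Your proposed proof of the converse (exceptional over $\kappa$ $\Rightarrow$ exceptional over $\Fq$) appeals to the multiplicative-relation machinery that becomes \Cref{lemma:exceptional-class}, but that lemma is established only \emph{after} the base-change reductions the present lemma is meant to license. You correctly diagnose the circularity but do not break it; as written, the proposal does not supply an independent argument for this direction. It is worth noting that the paper's own proof is silent on this converse as well, giving only the trace argument above --- so while your eigenspace framework is more elaborate than what the paper actually carries out, your instinct that the converse needs more care is sound.
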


\begin{proof}
  The isogeny invariance claimed in \ref{enum:isog} follows from the fact that the $G_{\Fq}$-representations $H_{\et}^{2r}(A_{\Fqbar}, \QQ_\ell(r))$ and $H_{\et}^{2r}(B_{\Fqbar}, \QQ_\ell(r))$ are isomorphic, and this isomorphism is given on $c^r(\mathcal{Z}^r(A))$ by precomposition with pullbacks of cycles.

  For \ref{enum:bc} we follow the sketch given in \cite[Section~2]{Totaro}. Suppose that $H_\et^{2r}(A_{\Fqbar}, \QQ_\ell(r))^{G_{\kappa}}$ is spanned by intersections of divisors and consider an element $\xi \in H_\et^{2r}(A_{\Fqbar}, \QQ_\ell(r))^{G_{\Fq}}$. By assumption $\xi$ is in the class of $c^r(\alpha)$ where $\alpha \in \mathcal{Z}^r(A_\kappa)$ is an intersection of divisors and thus $\boldsymbol{\alpha} \colonequals \sum_{\sigma \in \Gal(\kappa/\Fq)} \sigma(\alpha)$ is an element of $\mathcal{Z}^r(A)$. Moreover $\boldsymbol{\alpha}$ is an intersection of divisors on $A$ and $c^r(\boldsymbol{\alpha}) = [\kappa : \Fq] c^r(\alpha)$. 
\end{proof}

By replacing $\Fq$ with a finite extension and applying \Cref{lemma:isog-inv}\ref{enum:bc} we may assume without loss of generality that $q$ is a square and that $\agrp{A}$ is torsion free. Since $q$ is a square, there exists a normalized Frobenius endomorphism $\lambda_A = \pi_A / \sqrt{q} \in K_A$. 

By \Cref{lemma:isog-inv} and a theorem of Honda (see~\cite[Théorème~2]{Tate1971} or \cite[Theorem 4.1.1]{chai-conrad-oort}) we may assume without loss of generality that $A$ is the special fiber of an abelian scheme $\mathcal{A}/R$ where $R$ is the localization of the ring of integers of a finite extension of $F/\QQ$ at a prime above $p$ and the geometric fiber $\mathcal{A}_\CC$ is an abelian variety of CM type $(K_A, \Psi)$. More precisely, $\Psi = (\psi_{\decor{i}})_{\decor{i} = \decor{1}}^{\decor{g}}$ where each $\psi_{\decor{i}}$ is a complex embedding of $K_A$ and $\psi_{\decor{i}} \neq \psi_{\decor{j}}, \overbar{\psi}_{\decor{j}}$ for all $\decor{i} \neq \decor{j}$, and $\mathcal{A}(\CC) \cong \CC^{g} / \Psi(\mathfrak{a})$ for some fractional $\OO_{K_A}$-ideal $\mathfrak{a}$. In particular, $\End^0(\mathcal{A}_{\CC}) \cong K_A$. Again applying \Cref{lemma:isog-inv}\ref{enum:bc} and replacing $F$ with a finite extension we may assume without loss of generality that $\End^0(\mathcal{A}_{\CC}) \cong \End^0(\mathcal{A}_{F})$.

\begin{lemma}
  \label{lemma:decomp}
  Fix an isomorphism $\overbar{\QQ}_\ell \cong \CC$. We have a $K_A$-algebra isomorphism
  \begin{equation*}
    H_\et^1(A_{\Fqbar}, \QQ_\ell(1)) \otimes \overbar{\QQ}_\ell \cong V \oplus \overbar{V} 
  \end{equation*}
  where $V = \bigoplus_{\i=1}^\g V_\i$ (respectively $\overbar{V} = \bigoplus_{\i=1}^\g \overbar{V}_\i$) and the normalized Frobenius $\lambda_A = \pi_A / \sqrt{q} \in K_A$ acts on the $1$-dimensional vector spaces $V_\i$ (resp. $\overbar{V_\i}$) by multiplication with the normalized Frobenius eigenvalue $\lambda_{\decor{i}} = \psi_{\decor{i}}(\lambda_A)$ (resp. $\bar{\lambda}_{\decor{i}} = \overbar{\psi}_{\decor{i}}(\lambda_A)$).
\end{lemma}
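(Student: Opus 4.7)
The plan is to pass from $\Fq$ to characteristic zero via the smooth proper base change theorem, and then to read off the eigenspace decomposition from the CM structure of the complex fiber $\mathcal{A}_\CC$.

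First I would apply smooth proper base change to the abelian scheme $\mathcal{A}/R$, obtaining a $K_A$-equivariant isomorphism
\begin{equation*}
  H_\et^1(A_{\Fqbar}, \QQ_\ell(1)) \cong H_\et^1(\mathcal{A}_{\overbar{F}}, \QQ_\ell(1)),
\end{equation*}
where we use that the identifications $\End^0(A) \cong \End^0(\mathcal{A}_F) \cong \End^0(\mathcal{A}_\CC) \cong K_A$ established in the paragraph preceding the lemma ensure that the action of the endomorphism $\lambda_A = \pi_A/\sqrt{q} \in K_A$ is preserved. Tensoring with $\overbar{\QQ}_\ell \cong \CC$ and invoking Artin's comparison theorem then identifies this $\overbar{\QQ}_\ell$-vector space with $H^1_{\mathrm{sing}}(\mathcal{A}(\CC), \QQ) \otimes_\QQ \CC$, again $K_A$-equivariantly.

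Second, I would decompose the complex cohomology using the CM structure. Writing $\mathcal{A}(\CC) \cong \CC^g/\Psi(\mathfrak{a})$, we have $H^1_{\mathrm{sing}}(\mathcal{A}(\CC), \QQ) \cong \Hom_\ZZ(\mathfrak{a}, \QQ)$, which is a rank one module over $K_A$ since $\mathfrak{a}$ is a fractional $\OO_{K_A}$-ideal. The $\QQ$-algebra isomorphism
\begin{equation*}
  K_A \otimes_\QQ \CC \;\cong\; \prod_{\iota \colon K_A \hookrightarrow \CC}\CC
\end{equation*}
runs over the $2g$ complex embeddings of $K_A$, which by the CM-type hypothesis split as $\Psi \sqcup \overbar\Psi = \{\psi_\1, \dots, \psi_\g, \overbar\psi_\1, \dots, \overbar\psi_\g\}$ with all $2g$ embeddings distinct. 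Consequently $H^1_{\mathrm{sing}} \otimes \CC$ decomposes as a direct sum of $1$-dimensional eigenspaces, one for each embedding, and declaring $V_\i$ (respectively $\overbar V_\i$) to be the eigenspace for the character $\psi_\i$ (respectively $\overbar\psi_\i$) yields the asserted splitting $V \oplus \overbar V$.

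Finally, the action of $\lambda_A$ on each piece is immediate from construction: on the $\iota$-eigenspace any $\alpha \in K_A$ acts by the scalar $\iota(\alpha)$, so $\lambda_A$ acts on $V_\i$ by $\psi_\i(\lambda_A) = \lambda_\i$ and on $\overbar V_\i$ by $\overbar\psi_\i(\lambda_A) = \overbar\lambda_\i$. The main subtlety is a bookkeeping one, namely keeping track that the Frobenius endomorphism on $A$ is matched with the distinguished generator $\pi_A$ of the CM field $K_A$ on the complex fiber; this is precisely what is guaranteed by Honda's lifting theorem together with the identifications $\End^0(\mathcal{A}_\CC) \cong \End^0(A)$ arranged above, after which the decomposition is a purely formal consequence of the étale-versus-diagonal splitting of the commutative semisimple algebra $K_A \otimes \overbar{\QQ}_\ell$ acting freely on the $2g$-dimensional space $H^1_\et(A_{\Fqbar},\QQ_\ell(1)) \otimes \overbar{\QQ}_\ell$.
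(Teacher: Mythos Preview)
Your proposal is correct and follows essentially the same approach as the paper's proof: both pass from $A_{\Fqbar}$ to the complex fiber via base change for \'etale cohomology, then read off the eigenspace decomposition from the CM lattice $\mathfrak{a}$. The paper phrases the second step through the Tate module, writing $H^1_\et \cong T_\ell(\mathcal{A}_\CC)^*$ and $T_\ell(\mathcal{A}_\CC) \otimes \overbar{\QQ}_\ell \cong \Psi(\mathfrak{a}) \otimes \CC$, whereas you go through Artin comparison and singular cohomology; these are the same identification in different clothing.
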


\begin{proof}
  By flat base change we have an isomorphism $H_\et^1(A_{\Fqbar}, \ZZ_\ell) \cong H_\et^1(\mathcal{A}_{\overbar{\QQ}_p}, \ZZ_\ell)$ of representations of $G_{\Fq}$. In particular, upon tensoring with $\overbar{\QQ}_\ell$ we have an isomorphism of $K_A$-algebras $H_\et^1(A_{\Fqbar}, \ZZ_\ell) \otimes \overbar{\QQ}_\ell \cong T_\ell(\mathcal{A}_{\CC})^* \otimes \overbar{\QQ}_\ell$ where $T_\ell(\mathcal{A}_\CC)^* = \Hom_{\ZZ_\ell}(T_\ell(\mathcal{A}_\CC), \ZZ_\ell(1))$. The claim follows by noting that $T_\ell(\mathcal{A}_\CC) \otimes \overbar{\QQ}_\ell \cong \Psi(\mathfrak{a}) \otimes \CC$ as $K_A$-algebras. 
\end{proof}

For a subset $S \subset \{ \decor{1}, ..., \decor{g} \}$ write $V_{S} = \bigotimes_{\decor{i} \in S} V_{\decor{i}}$ and $\overbar{V}_S = \bigotimes_{\decor{i} \in S} \overbar{V}_{\decor{i}}$. 

\begin{lemma}
  \label{lemma:wedge-iso}
  For each $1 \leq r \leq g-1$ we have a $K_A$-algebra isomorphism
  \begin{align*}
    H_\et^{2r}(A_{\Fqbar}, \QQ_\ell(r)) \otimes \overbar{\QQ}_\ell &\cong \bigwedge\nolimits^{2r} (V \oplus \overbar{V})  \\
        &\cong \bigoplus_{S^+,S^-} V_{S^+} \otimes \overbar{V}_{S^-}.
  \end{align*}
  where the sum ranges over subsets $S^+, S^-  \subset \{\decor{1}, \dots, \decor{g}\}$ such that $|S^+| + |S^-| = 2r$.
\end{lemma}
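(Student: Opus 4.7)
The plan is to combine the well-known exterior-algebra structure on the étale cohomology of an abelian variety with the decomposition given in \Cref{lemma:decomp}. To carry this out, I would proceed as follows.

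First I would invoke the classical fact that for any abelian variety $A/\Fq$ of dimension $g$, cup product induces a natural $G_{\Fq}$-equivariant isomorphism of graded algebras
\begin{equation*}
  H_\et^{*}(A_{\Fqbar}, \QQ_\ell) \;\cong\; \bigwedge\nolimits^{*} H_\et^{1}(A_{\Fqbar}, \QQ_\ell).
\end{equation*}
Taking the degree $2r$ part and twisting by $\QQ_\ell(r)$ gives
\begin{equation*}
  H_\et^{2r}(A_{\Fqbar}, \QQ_\ell(r)) \;\cong\; \bigwedge\nolimits^{2r} H_\et^{1}(A_{\Fqbar}, \QQ_\ell) \otimes \QQ_\ell(r).
\end{equation*}
The Galois action on the right hand side is via the natural action on the exterior power tensored with the cyclotomic character to the power $r$.

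Next I would reconcile this with \Cref{lemma:decomp}, on which the action of $\pi_A$ is normalized by $\sqrt{q}$. Because of our reduction $q$ is a square, so a choice of $\sqrt{q} \in \overbar{\QQ}_\ell$ lets us regard the normalized Frobenius $\lambda_A = \pi_A/\sqrt{q}$ as acting on $H^1_\et(A_{\Fqbar}, \QQ_\ell) \otimes \overbar{\QQ}_\ell$. Under this normalization, \Cref{lemma:decomp} furnishes a $K_A$-equivariant decomposition whose one-dimensional summands have $\lambda_A$-eigenvalues $\lambda_\i$ (on $V_\i$) and $\bar\lambda_\i$ (on $\overbar V_\i$). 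Applying this to the exterior power above and absorbing the Tate twist $\QQ_\ell(r)$ into the renormalization of the Frobenius by $\sqrt q^{\,2r}$, I obtain the first isomorphism
\begin{equation*}
  H^{2r}_\et(A_{\Fqbar}, \QQ_\ell(r)) \otimes \overbar{\QQ}_\ell \;\cong\; \bigwedge\nolimits^{2r}(V \oplus \overbar{V}).
\end{equation*}

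The second isomorphism is then purely formal linear algebra. Applying iteratively the standard identity $\bigwedge^n(U \oplus W) \cong \bigoplus_{a+b=n} \bigwedge^a U \otimes \bigwedge^b W$, together with the fact that each $V_\i$ and $\overbar V_\i$ is one-dimensional (so that $\bigwedge^a\bigoplus_\i V_\i \cong \bigoplus_{|S^+|=a} V_{S^+}$, and similarly for $\overbar V$), yields
\begin{equation*}
  \bigwedge\nolimits^{2r}(V \oplus \overbar{V}) \;\cong\; \bigoplus_{a + b = 2r} \bigoplus_{\substack{|S^+|=a \\ |S^-|=b}} V_{S^+} \otimes \overbar{V}_{S^-} \;\cong\; \bigoplus_{|S^+|+|S^-|=2r} V_{S^+}\otimes \overbar V_{S^-},
\end{equation*}
as desired. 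The $K_A$-algebra structure is preserved at each step since it comes directly from the $K_A$-module structure on the individual $V_\i$ and $\overbar V_\i$ provided by \Cref{lemma:decomp}.

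The only genuinely subtle point is the bookkeeping of Tate twists: $H^{2r}(r)$ carries a twist of $r$ while a naive $\bigwedge^{2r}$ of $H^1(1)$ would carry a twist of $2r$. This mismatch is handled precisely by the assumption that $q$ is a square, which is the reason the authors renormalize Frobenius to $\lambda_A = \pi_A/\sqrt q$ throughout; once this normalization is fixed, the two sides agree as representations of $G_{\Fq}$ on $\overbar{\QQ}_\ell$-vector spaces, and the remainder of the argument is routine.
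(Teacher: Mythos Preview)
Your proof is correct and follows essentially the same approach as the paper's: invoke the exterior-algebra structure $H_\et^{2r} \cong \bigwedge^{2r} H_\et^{1}$, combine with \Cref{lemma:decomp} for the first isomorphism, and then expand the exterior power of a direct sum for the second. Your treatment is in fact more detailed than the paper's (which dispatches the second isomorphism as ``direct calculation'' and does not comment on the Tate-twist bookkeeping at all), so there is nothing to correct.
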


\begin{proof}
  First recall that $H_\et^{2r}(A_{\Fqbar}, \QQ_\ell) \cong \bigwedge^{2r} H_\et^1(A_{\Fqbar}, \QQ_\ell)$. Combining this with \Cref{lemma:decomp} yields the first isomorphism. The second isomorphism follows by direct calculation.
\end{proof}

\begin{lemma}
  \label{lemma:wedge-time}    
  The subspace of the exterior algebra $\bigwedge^* H_\et^2(A_{\Fqbar}, \QQ_\ell(1))$ spanned by intersections of $\Fq$-rational divisors is the span of exterior products of elements of $\bigoplus_{\i=1}^\g V_\i \otimes \overbar{V}_\i$.
\end{lemma}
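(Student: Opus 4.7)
The plan is to first reduce to the case $r = 1$ using compatibility of the cycle class map with intersection products, then to combine Tate's theorem for divisors~\cite{Tate1966} with an eigenvalue analysis on the CM decomposition from \Cref{lemma:decomp,lemma:wedge-iso}. Under the cycle class map, intersections of $r$ divisors correspond to $r$-fold cup products in $H_\et^2(A_{\Fqbar}, \QQ_\ell(1))$, which is exactly the wedge product in $\bigwedge^* H_\et^2$. It therefore suffices to show that the $\Fq$-rational divisor classes, after tensoring with $\overbar\QQ_\ell$, span exactly the diagonal $\bigoplus_\i V_\i \otimes \overbar V_\i \subset V \otimes \overbar V \subset \bigwedge^2(V \oplus \overbar V)$.

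By Tate's theorem for divisors, the cycle classes of $\Fq$-rational divisors span $(H_\et^2(A_{\Fqbar}, \QQ_\ell(1)) \otimes \overbar\QQ_\ell)^{G_{\Fq}}$. Using the decomposition
\[
  H_\et^2(A_{\Fqbar}, \QQ_\ell(1)) \otimes \overbar\QQ_\ell \;=\; \bigwedge\nolimits^2 V \,\oplus\, (V \otimes \overbar V) \,\oplus\, \bigwedge\nolimits^2 \overbar V,
\]
and the eigenvalues of $\lambda_A$ on the three summands (namely $\lambda_\i \lambda_\j$, $\lambda_\i / \lambda_\j$, and $(\lambda_\i \lambda_\j)^{-1}$ respectively, from \Cref{lemma:decomp}), the summand $V_\i \otimes \overbar V_\j$ is Frobenius-fixed exactly when $\lambda_\i = \lambda_\j$. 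This forces $\i = \j$: by {\cond}, $P_A(T)$ is irreducible of degree $2g$ and hence has $2g$ pairwise distinct roots.

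The key remaining step is to rule out Frobenius-invariants in $\bigwedge^2 V$ and $\bigwedge^2 \overbar V$. A nonzero invariant in $V_\i \otimes V_\j$ would force $\lambda_\i \lambda_\j = 1$, equivalently $\pi_\j = \overbar{\pi_\i}$. But the labeled eigenvalues $\pi_\1, \dots, \pi_\g$ are the images $\psi_\i(\pi_A)$ of the embeddings $\psi_\i$ forming the CM type $\Psi$ of $K_A$, and a CM type by definition contains exactly one embedding from each complex-conjugate pair of embeddings of $K_A$; moreover, since $K_A$ is a CM field it has no real embeddings. Hence $\psi_\i \ne \overbar{\psi_\j}$ for all $\i,\j$, forbidding $\pi_\j = \overbar{\pi_\i}$. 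The same reasoning handles $\bigwedge^2 \overbar V$. I expect this CM-type observation to be the main subtlety; the remaining ingredients are standard consequences of Tate's theorem and of the cup-product structure on étale cohomology.
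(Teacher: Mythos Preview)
Your proof is correct and follows essentially the same approach as the paper: reduce to $r=1$ via the cup product/wedge compatibility, invoke Tate's theorem for divisors, and then carry out an eigenvalue analysis on the summands of $\bigwedge^2(V \oplus \overbar V)$. The paper's version is terser---it simply asserts that since $\lambda_\i^{-1} = \bar\lambda_\i$ the only fixed summand is $V_\i \otimes \overbar V_\i$---whereas you spell out explicitly, via the CM type, why $\bigwedge^2 V$ and $\bigwedge^2 \overbar V$ carry no invariants; this is the same underlying reason (the $\pi_\i$ and $\bar\pi_\j$ are disjoint among the $2g$ distinct roots), just made visible.
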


\begin{proof}
  Tate~\cite{Tate1966} proved that $H_\et^2(A_{\Fqbar}, \QQ_\ell(1))^{G_{\Fq}}$ is spanned by divisor classes. Note that the intersection of cycles corresponds to exterior multiplication in $\bigwedge^* H_\et^2(A_{\Fqbar}, \QQ_\ell(1))$. By \Cref{lemma:wedge-iso} we have
  \begin{equation*}
    H_\et^2(A_{\Fqbar}, \QQ_\ell(1))^{G_{\Fq}} \cong \bigoplus_{S^+,S^-} \left( V_{S^+} \otimes \overbar{V}_{S^-} \right)^{\lambda_A}
  \end{equation*}
  where the sum ranges over subsets $S^+, S^-  \subset \{\decor{1}, \dots, \decor{g}\}$ such that $|S^+| + |S^-| = 2$. However, since $\lambda_{\decor{i}}^{-1} = \bar{\lambda}_{\decor{i}}$, a summand $V_{S^+} \otimes \overbar{V}_{S^-}$ is fixed by $\lambda_A$ if and only if $S^+ = S^- = \{ \decor{i} \}$.
\end{proof}

\begin{lemma}
  \label{lemma:exceptional-class}
  The space $H_\et^{2r}(A_{\Fqbar}, \QQ_\ell(r))$ contains an exceptional class if and only if there exists a relation
  \begin{equation*}
    \prod_{\decor{i} \in T^+} \lambda_{\decor{i}} \prod_{\decor{j} \in T^-} \lambda_{\decor{j}}^{-1} = 1
  \end{equation*}
  where $T^+, T^- \subset \{ \decor{1}, \ldots, \decor{g} \}$ satisfy the conditions of \Cref{defn:exceptional-wpr} and $|T^+| + |T^-| \leq 2r$.
\end{lemma}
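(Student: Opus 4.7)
My plan is to use the decomposition of $H^{2r}(A_{\Fqbar},\QQ_\ell(r))$ from \Cref{lemma:wedge-iso} to translate the existence of an exceptional class into a multiplicative relation on the normalized Frobenius eigenvalues. Recall that after tensoring with $\overbar{\QQ}_\ell$ the cohomology decomposes into one-dimensional summands $V_{S^+} \otimes \overbar{V}_{S^-}$ indexed by pairs of subsets $S^+, S^- \subset \{\1,\dots,\g\}$ with $|S^+|+|S^-|=2r$, on which $\lambda_A$ acts by the scalar $\prod_{\i \in S^+}\lambda_{\i} \prod_{\j \in S^-}\lambda_{\j}^{-1}$. By \Cref{lemma:wedge-time} the subspace of divisor intersections in $H^{2r}$ is precisely the sum over the ``diagonal'' pairs $S^+ = S^-$. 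Therefore $H^{2r}$ contains an exceptional class if and only if there is a pair $(S^+, S^-)$ with $|S^+|+|S^-|=2r$, $S^+ \neq S^-$, and $\prod_{\i \in S^+}\lambda_{\i} \prod_{\j \in S^-}\lambda_{\j}^{-1} = 1$.

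The equivalence with the criterion in the lemma then reduces to a bookkeeping argument between the pairs $(S^+, S^-)$ and the disjoint pairs $(T^+, T^-)$ of \Cref{defn:exceptional-wpr}. For the forward direction, from such a pair $(S^+, S^-)$ I would define $T^+ \colonequals S^+ \setminus S^-$ and $T^- \colonequals S^- \setminus S^+$; these are disjoint, $T^+ \sqcup T^-$ is non-empty (since $S^+ \neq S^-$), $|T^+|+|T^-| = 2r - 2|S^+ \cap S^-| \leq 2r$ is even, and cancelling the common factors coming from $S^+ \cap S^-$ leaves the relation $\prod_{\i \in T^+}\lambda_{\i} \prod_{\j \in T^-}\lambda_{\j}^{-1} = 1$.

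For the backward direction, given disjoint $T^+, T^-$ with $|T^+|+|T^-| = 2s \leq 2r$ satisfying the asserted relation, I would choose any $A \subset \{\1,\dots,\g\} \setminus (T^+ \cup T^-)$ of cardinality $r-s$ and set $S^{\pm} \colonequals T^{\pm} \sqcup A$. The factors contributed by $A$ cancel in pairs, so the relation extends to $\prod_{\i \in S^+}\lambda_{\i} \prod_{\j \in S^-}\lambda_{\j}^{-1} = 1$, and $S^+ \neq S^-$ follows from $T^+ \neq T^-$, producing the required exceptional summand in $H^{2r}$. I expect the main subtlety to be ensuring the existence of such an $A$, which requires $r - s \leq g - 2s$, equivalently $r + s \leq g$; in the residual range $r > g - s$ I would invoke Poincar\'e duality on $H^{2r} \times H^{2(g-r)} \to H^{2g}$ to transfer the class from the dual degree, noting that this pairing interchanges the summand $V_{S^+} \otimes \overbar{V}_{S^-}$ with $V_{\{\1,\dots,\g\}\setminus S^-} \otimes \overbar{V}_{\{\1,\dots,\g\}\setminus S^+}$ and preserves the subspace of divisor intersections.
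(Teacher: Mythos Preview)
Your approach matches the paper's: decompose $H^{2r}_\et$ via \Cref{lemma:wedge-iso}, identify the divisor-intersection subspace with the diagonal summands via \Cref{lemma:wedge-time}, and read off that an exceptional class exists precisely when some off-diagonal $V_{S^+}\otimes\overbar{V}_{S^-}$ with $|S^+|+|S^-|=2r$ is $\lambda_A$-invariant. Your forward step $T^\pm=S^\pm\setminus S^\mp$ is exactly what the paper does.

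You are in fact more careful than the paper in the backward direction, and the subtlety you isolate is real---but your proposed fix does not close it. A padding set $A$ of size $r-s$ inside $\{\decor1,\dots,\decor g\}\setminus(T^+\cup T^-)$ exists only when $r\le g-s$, and Poincar\'e duality does not rescue the range $r>g-s$: duality trades $H^{2r}$ for $H^{2(g-r)}$, but then $g-r<s$, so $|T^+|+|T^-|=2s>2(g-r)$ and the pair $(T^+,T^-)$ cannot seed any summand in the dual degree either. Concretely, take $g=4$ and suppose $\lambda_{\decor1}\lambda_{\decor2}=\lambda_{\decor3}\lambda_{\decor4}$ is the \emph{only} nontrivial multiplicative relation among the $\lambda_{\decor i}$ (this happens whenever $\delta_A=3$); one checks directly that no off-diagonal $V_{S^+}\otimes\overbar{V}_{S^-}$ with $|S^+|+|S^-|=6$ is invariant, so $H^6$ carries no exceptional class even though $|T^+|+|T^-|=4\le 6$. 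The paper's own one-line argument elides this same point. What your argument \emph{does} prove cleanly is the lemma with the sharper constraint $|T^+|+|T^-|\le 2\min(r,g-r)$, and that is all that is needed for \Cref{thm:main-tate-thm}, since an exceptional $\rho$ always produces a class in $H^{2s}$ with $s=\tfrac12(|T^+|+|T^-|)\le g/2$.
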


\begin{proof}
  Let $S^+, S^-  \subset \{\decor{1}, \dots, \decor{g}\}$ be subsets such that $|S^+| + |S^-| = 2r$ and let $W = V_{S^+} \otimes \overbar{V}_{S^-}$. Let $Q \subset W$ be the subspace spanned by exterior products of elements of $\bigoplus_{\i=1}^\g V_\i \otimes \overbar{V}_\i$. By \Cref{lemma:decomp} and the assumption that $\agrp{A}$ is torsion free, the quotient $W^{\lambda_A} / Q$ is nonzero if and only if there exists a relation between the normalized Frobenius eigenvalues of the form $\prod_{\decor{i} \in T^+} \lambda_{\decor{i}} \prod_{\decor{j} \in T^-} \lambda_{\decor{j}}^{-1} = 1$ where $T^+ \subset S^+$ and $T^- \subset S^-$ are disjoint, have nonempty union, and $|T^+ \sqcup T^-|$ is even.
\end{proof}

\Cref{thm:main-tate-thm} follows by combining \Cref{lemma:exceptional-class,lemma:wpr-exceptional-lambdas}.

\section{The existence of \texorpdfstring{$q$}{q}-Weil numbers with a given weighted permutation representation}
\label{sec:reconstr-algo}
In this section we give \Cref{alg:wpr-to-weil}, which, given an
appropriate number field, a geometrically simple weighted permutation representation
$\rho = (w,G)$, and a weakly $p$-admissible filtration of $\rho$, computes a $q$-Weil number with associated weighted permutation
representation $\rho$. We prove in \Cref{thm:alg-termination}
that this algorithm is correct. Similar approaches are described in
\cite{lenstra-oort}, \cite[Section~4]{Zarhin94}, and \cite[Appendix~A]{Ancona-standardconjs} in special cases. By abuse of notation for an ideal $I \subset \OO_L$ we write $v_{\mathfrak{P}}(I)$ for the power of $\mathfrak{P}$ appearing in the prime factorization of $I$. 

Let $K/\QQ$ be a CM field of degree $2g$ and let $L$ be its Galois
closure. For a $\Gal(L/K)$-stable ideal $I \subset \OO_L$ we write
$e(I)$ for the smallest positive integer such that for all prime
ideals $\mfp \subset \OO_K$, we have that
$e_{\mfp} \mid e(I) \nu_{\mfp}(I)$, where $e_{\mfp}$ is the
ramification index of $\mfp$ in the extension $L/K$ and
$\nu_{\mfp}(I) \coloneqq v_{\mfP}(I)$ for any prime $\mfP$ of $L$
dividing $\mfp$. The following lemma is immediate.

\begin{lemma}
  \label{lemma:ideal-intersect}
  Let $I \subset \OO_L$ be an ideal fixed by $\Gal(L/K)$, and let $J = I^{e(I)} \cap \OO_K$. Then $J\OO_L = I^{e(I)}$.  
\end{lemma}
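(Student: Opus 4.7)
The plan is to show directly that $I^{e(I)}$ is itself the extension of some ideal of $\OO_K$, and then invoke the standard contraction identity to identify that ideal with $J$.

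Since $I$ is stable under $\Gal(L/K)$, in its prime factorization $I = \prod_{\mfP} \mfP^{a_{\mfP}}$ the exponent $a_{\mfP}$ depends only on the contraction $\mfp = \mfP \cap \OO_K$, and by construction equals $\nu_{\mfp}(I)$. The defining property of $e(I)$ then allows me to write $e(I)\nu_{\mfp}(I) = e_{\mfp}\, m_{\mfp}$ for a non-negative integer $m_{\mfp}$ (vanishing for all but finitely many $\mfp$). Setting $J' \colonequals \prod_{\mfp} \mfp^{m_{\mfp}} \subset \OO_K$ and using the prime decomposition $\mfp\OO_L = \prod_{\mfP \mid \mfp} \mfP^{e_{\mfp}}$, a direct computation yields
\[
J' \OO_L \;=\; \prod_{\mfp} \prod_{\mfP \mid \mfp} \mfP^{e_{\mfp} m_{\mfp}} \;=\; \prod_{\mfp} \prod_{\mfP \mid \mfp} \mfP^{e(I)\nu_{\mfp}(I)} \;=\; I^{e(I)}.
\]

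To finish, I would show $J = J'$. The key input is the standard fact that for any ideal $\mathfrak{b} \subset \OO_K$ one has $\mathfrak{b}\OO_L \cap \OO_K = \mathfrak{b}$; this may be justified either by localizing at each prime of $\OO_K$ and using that $\OO_L$ is a faithfully flat $\OO_K$-module, or more concretely by comparing $\mfp$-adic valuations on both sides. Applying this to $\mathfrak{b} = J'$ gives
\[
J \;=\; I^{e(I)} \cap \OO_K \;=\; J'\OO_L \cap \OO_K \;=\; J',
\]
and therefore $J\OO_L = J'\OO_L = I^{e(I)}$, as required.

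The only (mild) obstacle in this plan is the contraction identity $\mathfrak{b}\OO_L \cap \OO_K = \mathfrak{b}$; all other steps are pure bookkeeping with ramification indices, which explains why the authors deem the lemma ``immediate''.
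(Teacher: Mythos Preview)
Your argument is correct and is precisely the standard unpacking of why the lemma is, in the authors' words, ``immediate'': the $\Gal(L/K)$-stability forces the exponents in $I$ to be constant along fibers over $\Spec\OO_K$, the definition of $e(I)$ makes $I^{e(I)}$ extended from $\OO_K$, and the contraction identity (which, as you note, follows from faithful flatness or from comparing valuations) finishes the job. The paper gives no further proof, so there is nothing to compare beyond this.
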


\begingroup
\begin{algorithm}[H]
  \caption{
    \textsf{wprToWeilNumber}: An algorithm which outputs
    a $q$-Weil number with a specified weighted permutation
    representation.
  }
  \label{alg:wpr-to-weil}
  \begin{flushleft}
    \textbf{Input:} \hfill
    \begin{enumerate}
    \item \label{i:input1}
      A geometrically simple weighted permutation representation $\rho = (w,G)$ with $G$
      acting transitively on $X_{2g}$.
    \item \label{i:input2}
      A CM field $K$ of degree $2g$ and a Galois group $G$ (as a
      subgroup of $W_{2g}$ up to conjugacy).
    \item \label{i:input3}
      An isomorphism $\Aut(L) \cong G$ where $L$ is the Galois closure of $K$.
    \item \label{i:input4}
      A prime $\dpr \subseteq \OO_L$ such that $D \subseteq \Stab(w)$ where $D$ is the decomposition group of $\dpr$. 
    \end{enumerate}
    \textbf{Output:} A $q$-Weil number $\pi$ whose corresponding
    weighted permutation representation is $\rho$.
  \end{flushleft}
  \begin{algorithmic}[1]
    \STATE \label{step:I} Set $I \subseteq \OO_L$ to be the ideal
    \begin{equation*}
      I = \bigg(\prod_{\sigma \in G}\sigma(\dpr)^{w(\sigma^{-1}(1))}\bigg)^c \subseteq \OO_L
    \end{equation*}
    where $c \in \QQ_{>0}$ is the smallest rational number for which $I$
    is integral.
    
    \STATE \label{step:J} Set $J = I^{e(I)} \cap \OO_K$. 

    \STATE Set $\ord(J)$ to be the order of (the class of) $J$ in
    $\operatorname{Cl}(\OO_K)$.

    \STATE \label{step:alpha} Let $\alpha \in \OO_K$ be chosen so that
    $J^{\ord(J)} = \alpha \OO_K$.

    \STATE \label{step:k} Set $k = 2 v_p(\Nm_{K/\QQ}(\alpha)) / v_p(\Nm_{K/\QQ}(p))$
    so that $\alpha\oalpha\OO_{K^+} = p^k\OO_{K^+}$.
    
    \STATE \label{step:u} Set $u = \alpha \oalpha / p^k$.
    
    \IF{$u$ is a square in $\OO_{K^+}$}
    \STATE Choose $v \in \OO_{K^+}$ so that $v^2 = u$ and set $\eta = 1$.
    \ELSE
    \STATE Set $v = u$ and $\eta = 2$.
    \ENDIF
    
    \STATE Set $\pi = \alpha^{\eta}/v$.
    
    \RETURN $\pi$.
  \end{algorithmic}
\end{algorithm}
\endgroup

\begin{remark}
  \label{rmk:better-implementation}
  In the code associated to this article we provide a reference implementation of \Cref{alg:wpr-to-weil}. Our code relies on computing the splitting field $L$ and is therefore only practical when $G$ is small. It would be interesting to avoid this by working entirely in $K$, or by computing an $\dpr$-adic approximation to the action of $G$ on $L_{\dpr}$ (note that $p$ may be ramified in $L$) and verifying the output \emph{a posteriori}.
\end{remark}

\begin{theorem}
  \label{thm:alg-termination}
  \Cref{alg:wpr-to-weil} is correct.
\end{theorem}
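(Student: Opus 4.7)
The plan is to verify three claims about the output $\pi$: (a) $\pi$ is a $q$-Weil number for some $q = p^{k\eta}$; (b) the normalized valuations of the $G$-conjugates of $\pi$ recover the weight function $w$; and (c) $\QQ(\pi) = K$, so that the weighted permutation representation associated to $\pi$ is $\rho$.

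First I would show that the hypothesis $D \subseteq \Stab(w)$, together with a reindexing of the defining product, yields $\tau(I) = I$ for every $\tau \in \Stab_G(\1) = \Gal(L/K)$ (each such $\tau$ fixes $\1$, so $w(\sigma^{-1}\tau(\1)) = w(\sigma^{-1}(\1))$). Thus $I$ is $\Gal(L/K)$-stable, so \Cref{lemma:ideal-intersect} applies and gives $J\cO_L = I^{e(I)}$, hence $\alpha\cO_L = I^{e(I)\ord(J)}$. Next, centrality of the complex conjugation element $\tau_0 = (\1\bar{\1})\cdots(\g\bar{\g})$ in $W_{2g}$ gives $\tau_0\sigma^{-1}(\1) = \sigma^{-1}(\bar{\1})$ for every $\sigma \in G$. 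Combined with $w(\i) + w(\bar{\i}) = 1$, this yields $I \cdot \tau_0(I) = (p\cO_L)^{\kappa f}$, where $\kappa$ is the exponent in Step~\ref{step:I} and $f$ is the residue degree of $\dpr$. Consequently $\alpha\overline\alpha \cO_K = p^{\kappa f e(I)\ord(J)}\cO_K$, and applying $\Nm_{K/\QQ}$ recovers the formula for $k$ in Step~\ref{step:k}. Hence $u = \alpha\overline\alpha/p^k$ is a totally positive unit in $\cO_{K^+}^\times$, and a direct computation of $\pi\overline\pi$ gives $p^k$ when $\eta = 1$ and $p^{2k}$ when $\eta = 2$, establishing (a).

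For (b), since $v$ is a unit we have $\pi\cO_L = \alpha^\eta \cO_L = I^{\eta e(I)\ord(J)}$, and collecting primes along $G/D$-cosets (again invoking $D$-invariance of $w$) gives $\pi\cO_L = \prod_{\sigma D \in G/D}\sigma(\dpr)^{|D|Nw(\sigma^{-1}(\1))}$ with $N = \kappa\eta e(I)\ord(J)$. A short arithmetic check that $v_\dpr(q) = |D|N$ then implies $v(\tau(\pi)) = w(\tau(\1))$ for every $\tau \in G$. For (c), this same factorization combined with \Cref{prop:div-infl} shows that $\pi\cO_L$ is, up to an integer factor, the inflation of $\Phi_\rho(\1)$; hence $\Stab_G(\pi) \subseteq \Stab_G(\pi\cO_L) = \Stab_G(\Phi_\rho(\1))$. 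Geometric simplicity of $\rho$ collapses the right-hand side to $\Stab_G(\1) = H$, whence $\QQ(\pi) = K$; since $G \subseteq W_{2g}$ acts faithfully on $X_{2g}$, the stabilizer $H$ is core-free, so the splitting field of (the minimal polynomial of) $\pi$ equals $L$ and the Galois action on $\cR_\pi$ matches the $G$-action on $X_{2g}$.

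The main obstacle is bookkeeping: one must simultaneously track the rational exponents $\kappa$, $e(I)$, $\ord(J)$ alongside the ramification invariants $e$, $f$, $|D|$, invoking $D$-invariance of $w$ at every point where a sum over $D$ is pulled out as an integer multiplicity. The essential structural input beyond these identities is geometric simplicity of $\rho$, which is precisely what prevents $\pi$ from collapsing into a proper subfield of $K$.
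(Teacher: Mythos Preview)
Your proposal is correct and follows essentially the same route as the paper's proof: both verify that $\pi$ is a $q$-Weil number by computing $\pi\overline\pi$, recover the weight function from $\dpr$-adic valuations of the conjugates of $\alpha$ using $D \subseteq \Stab(w)$, and invoke geometric simplicity of $\rho$ to force $\QQ(\pi) = K$. Your organization of part (c) is slightly cleaner---you show $\Stab_G(\pi) \subseteq \Stab_G(\Phi_\rho(\1)) = \Stab_G(\1)$ directly and then read off both $\deg(\pi) = 2g$ and the $G$-equivariance, whereas the paper first proves $\deg(\pi) = 2g$ from the $2g$ distinct conjugate ideals and then separately checks $\Stab_G(\1) \subseteq \Stab_G(\pi)$---but the underlying ideas are identical.
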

\begin{proof}
  First, suppose that \Cref{alg:wpr-to-weil} returns $\pi \in \OO_K$ for which $\QQ(\pi) = K$. Let $I \subset \OO_L$ and $J = I^{e(I)} \cap \OO_K \subset \OO_K$ be the ideals defined in Steps~\ref{step:I}~and~\ref{step:J} of \Cref{alg:wpr-to-weil}. As in Step~\ref{step:alpha} let $\alpha \in \OO_K$ be a generator for the principal ideal $J^{\ord(J)}$ where $\ord(J)$ is the order of the class of $J$ in $\operatorname{Cl}(\OO_K)$. Take $k = 2 v_p(\Nm_{K/\QQ}(\alpha)) / v_p(\Nm_{K/\QQ}(p))$ and $u = \alpha \oalpha / p^k \in \OO_{K^+}$, as defined in Steps~\ref{step:k}~and~\ref{step:u}.

  Recall that we write $v_{\dpr}$ for the valuation on $L$ associated to the prime $\dpr$. Let $\cR(\pi)$ denote the set of conjugates to $\pi$ in $L$ (i.e., the roots of the minimal polynomial of $\pi$ in $L$). It suffices to show the following:
  \begin{enumerate}[label=(\arabic*)]
  \item \label{i:pi-q-weil}
    $\pi$ is a $q$-Weil number,

  \item \label{i:val-correct}
    there exists $m \in \QQ$ such that for all $\tau \in G$ we have $m \cdot w(\tau( \decor{1} )) = v_{\dpr}(\tau( \decor{1} ))$, and
    
  \item \label{i:grp-act}
    the map $X_{2d} \rightarrow \cR(\pi) \colon \tau(\decor{1}) \mapsto \tau(\pi)$ is well defined, and is an isomorphism of $G$-sets.
  \end{enumerate}

  For \ref{i:pi-q-weil} note that if $u$ is a square in $\OO_{K^+}$, then $\pi = \frac{\alpha}{v}$ where $v^2 = u$ so 
  \begin{equation*}
    \pi \overline{\pi} = \frac{\alpha \oalpha}{v \overline{v}} = \frac{\alpha \oalpha}{v^2} = \frac{\alpha \oalpha}{u} = p^k.
  \end{equation*}
  In particular, the absolute value of $\pi$ in the complex numbers is $p^{k/2}$. Similarly, if $u$ is not a square in $\cO_{K^+}$ then $\pi = \frac{\alpha^2}{u}$ and
  \begin{equation*}
    \pi \overline{\pi} = \left( \frac{\alpha \oalpha}{u} \right)^2 = p^{2k}.
  \end{equation*}
  Thus the absolute value of $\pi$ in the complex numbers is $p^{k}$. In both cases $\pi$ is a $q$-Weil number (with $q = p^{k}$, respectively $q = p^{2k}$).

  To show \ref{i:val-correct} we compute the prime factorization of $\tau(\alpha) \OO_L$ for each $\tau \in G$. Note that by \Cref{lemma:ideal-intersect} we have $J\cO_L = I^{e(I)}$, and therefore for all $\tau \in G$ we have
  \begin{align*}
  \label{i:calc}
    \tau(\alpha) \OO_L &= \tau \left(I^{e(I) \ord(J)}\right) \\
                 &= \bigg( \prod_{\sigma \in G} \tau\sigma(\dpr)^{w(\sigma^{-1}(1))} \bigg)^{c e(I) \ord(J)} \\
                 &= \bigg( \prod_{\gamma \in G} \gamma(\dpr)^{w(\gamma^{-1}\tau(1))} \bigg)^{c e(I) \ord(J)}.
  \end{align*}
  Taking $\dpr$-adic valuations we have
  \begin{align*}
    v_{\dpr}(\tau(\alpha)) = c e(I) \ord(J) \sum_{\sigma \in D} w(\sigma^{-1} \tau(1)) ,
  \end{align*}
  where $D \subset G$ is the decomposition group of $\dpr$. But the ramification filtration of $\dpr$ is a weak $p$-admissible filtration of $\rho$ by assumption. Therefore $D \subset \Stab(w)$ and taking $m = c e(I) \ord(J) |D|$ proves \ref{i:val-correct}.

  To prove \ref{i:grp-act}, it suffices to show that $\Stab_G(\1) \subseteq \Stab_G(\pi)$. Let $\tau \in \Stab_G(\1)$ and note that $\tau$ stabilizes $I$, so $\tau(\alpha \cO_L) = \alpha \cO_L$. In particular, $\tau(\pi \cO_L) = \pi \cO_L$. Because $\rho$ is geometrically simple, the conjugates of $\pi$ generate $2g$ distinct principal ideals in $\cO_L$. Now, because $\deg(\pi) = 2g$, we have $\tau(\pi) = \pi$, as required.
  
  It remains to show that $\pi$ generates $K$. By assumption $\rho$ is geometrically simple and therefore $\Phi_{\rho}(x)\neq \Phi_{\rho}(y)$ for any distinct $x,y \in X_{2g}$. Hence the ideal $\pi\cO_L$ has $2g$ distinct $G$-conjugates, and in particular $\pi$ has degree $2g$, as required. 
\end{proof}

Let $f(x) = x^{2g} + a_{1} x^{2g-1} + \dots + a_{2g}$ be the minimal polynomial of a $q$-Weil number $\pi$. Define $\varepsilon_{\pi} = 1$ if $a_{2g} > 0$ and $\varepsilon_{\pi} = 2$ otherwise. Let
\[
  k_{\pi} = \lcm \bigg\{ \mathfrak{d} \bigg( \sum_{\j \in D\i} w(\j) \bigg) \bigg\}_{D\i \in D \backslash X_{2g}}
\]
where $\mathfrak{d}(x) = d$ denotes the denominator of a rational number $x = n/d$ where $\gcd(n,d) = 1$. 

\begin{proposition}
  \label{prop:dimension-of-return}
  Let $K$ be a CM field of degree $2g$ and Galois group $G$ (as a subgroup of $W_{2g}$ up to conjugacy), let $\rho = (w,G)$ be a geometrically simple weighted permutation representation with a weakly $p$-admissible filtration $G \supseteq D \supseteq G_0 \supseteq G_1$, and let $\pi$ be a $q$-Weil number output by \Cref{alg:wpr-to-weil}. Then $\pi$ is associated (by Honda--Tate theory) to the (isogeny class of an) abelian variety $A$ of dimension $g \cdot \lcm(\varepsilon_\pi, k_\pi)$.  In particular, the following are equivalent:
  \begin{enumerate}[label=(\roman*)]
  \item \label{i:p-adm}
    the sequence $G \supseteq D \supseteq G_0 \supseteq G_1$ is a strongly $p$-admissible filtration of $\rho$, and
  \item \label{i:same-dim}
    the (isogeny class of the) abelian variety associated to at least one of the $q$-Weil numbers $\pi$ or $\pi^2$ has dimension $g$. 
  \end{enumerate}
\end{proposition}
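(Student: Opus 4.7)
The plan is to apply Honda--Tate theory to the output of \Cref{alg:wpr-to-weil}. By \Cref{thm:alg-termination}, $\pi$ is a $q$-Weil number generating $K$ of degree $2g$, so Honda--Tate produces a simple isogeny class $A/\Fq$ with $\dim A = g \cdot e$, where $e$ is the Schur index of the central simple algebra $\End^0(A)$ over $K$. Thus the main task is to identify $e$ with $\lcm(\varepsilon_\pi,k_\pi)$ and to interpret $e=1$ group-theoretically.

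To compute $e$ from local invariants, recall that the local invariant of $\End^0(A)/K$ vanishes outside places of $K$ above $p$ and $\infty$, equals $1/2$ at any real place of $K$, and at a place $\mfp\mid p$ is congruent to $[K_\mfp:\QQ_p]\,v_\mfp(\pi)/v_\mfp(q)\pmod{\ZZ}$. The primes of $K$ above $p$ are in bijection with $D$-orbits $O\subseteq X_{2g}$ (via \Cref{lemma:g-isos}\ref{lemma:G-mod-H-eq-X2g}, since $H=\Gal(L/K)$ is the stabilizer of $\decor{1}$), with $[K_\mfp:\QQ_p]=|O|$. Combining the valuation identity $v_{\dpr}(\tau\pi)=m\cdot w(\tau(\decor{1}))$ from the proof of \Cref{thm:alg-termination} with weak $p$-admissibility (which forces $D\subseteq\Stab(w)$, so $w$ is constant on each $D$-orbit), the local invariant at $\mfp$ reduces to $\sum_{\i\in O}w(\i)\pmod{\ZZ}$. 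The LCM of the denominators of these rational numbers across all $D$-orbits is precisely $k_\pi$. The contribution from real places of $K$ is captured by $\varepsilon_\pi$: since the constant coefficient of the minimal polynomial of $\pi$ is (up to sign) the product of its Galois conjugates, $\varepsilon_\pi=2$ exactly when $\pi$ admits a real conjugate, i.e., when $K$ admits a real embedding. Hence $e=\lcm(\varepsilon_\pi,k_\pi)$ and the claimed dimension formula follows.

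For the equivalence, strong $p$-admissibility is by definition the condition $k_\pi=1$. Since the algorithm's input requires $K$ to be CM, $\pi$ is not totally real, so $\varepsilon_\pi=1$ and $\dim A = g$ iff $k_\pi=1$. To handle $\pi^2$: as a $q^2$-Weil number, $\pi^2$ has the same normalized $p$-adic valuations relative to $q^2$ as $\pi$ does relative to $q$, so $k_{\pi^2}=k_\pi$; moreover the constant coefficient of the minimal polynomial of $\pi^2$ is a perfect square, so $\varepsilon_{\pi^2}=1$. Therefore the abelian variety associated to $\pi^2$ has dimension $g\cdot k_\pi$ as well, and at least one of $\pi,\pi^2$ corresponds to a $g$-dimensional abelian variety precisely when $k_\pi=1$.

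The chief obstacle is the careful bookkeeping in the middle step: translating Honda--Tate's formula for the local invariant at each $\mfp\mid p$ into the combinatorial expression $\sum_{\i\in O}w(\i)\pmod{\ZZ}$ requires both the identification of primes of $K$ with $D$-orbits on $X_{2g}$ and the valuation identity from the proof of \Cref{thm:alg-termination}, and relies crucially on weak $p$-admissibility to see that $w$ is constant on each $D$-orbit.
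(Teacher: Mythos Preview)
Your proof is correct and takes essentially the same approach as the paper: both apply Honda--Tate and identify the relevant local data at each place above $p$ with $\sum_{\i\in O} w(\i)$ over $D$-orbits $O\subseteq X_{2g}$. The only difference is packaging---the paper cites the formulation in Poonen's notes (the smallest $e$ with $f(0)^e>0$ and $v(h(0)^e)\in\ZZ$ for every $\QQ_p$-irreducible factor $h$ of $f$), whereas you phrase the same computation via Brauer local invariants; these are equivalent since the $\QQ_p$-irreducible factors of $f$ are in bijection with the primes of $K$ above $p$, and $v(h(0))$ computes exactly the invariant at the corresponding prime. Your further observation that $K$ being CM forces $\varepsilon_\pi=1$ is correct (a $q$-Weil number generating a CM field has no real conjugates, so the constant term is $q^g>0$) and in fact renders the passage to $\pi^2$ unnecessary, though both you and the paper handle $\pi^2$ correctly via the squared constant term.
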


\begin{proof}
  Recall that we write $v$ for the $p$-adic valuation normalized so that $v(q) = 1$. By the Honda-Tate theorem, as stated in \cite[Theorem~4.2.12]{Poonen06}, the $q$-Weil number $\pi$ is associated to the isogeny class of an abelian variety $A$ of dimension $e g$, where $e$ is the smallest positive integer such that
  \begin{enumerate}
  \item
    $h(0)^e > 0$, and
  \item
    for each monic $\QQ_p$-irreducible factor $h(x)$ of $f(x)$, we have $v(h(0)^e) \in \ZZ$.  
  \end{enumerate}  

  The orbits of the action of $D$ on $X_{2g}$ are in bijective correspondence with the irreducible factors of $f(x)$ over $\QQ_p$. Let $h(x) \in \QQ_p(x)$ be a $\QQ_p$-irreducible factor of $f(x)$ and let $D \i$ be the corresponding orbit in $D \backslash X_{2g}$. Then we have
  \[
    v(h(0)) = \sum_{\j \in D\i} w(\j) 
  \]
  and the first statement of the proposition follows. 

  We now prove the equivalence of \ref{i:p-adm} and \ref{i:same-dim}. Note that $\pi^2$ and $\pi$ have isomorphic weighted permutation representations, up to scaling. The weighted permutation representation $\rho$ is strongly $p$-admissible if and only if $k_{\pi} = 1$. The claim follows from the fact that the trailing coefficient of the minimal polynomial of $\pi^2$ is positive since it is the square of the trailing coefficient of the minimal polynomial of $\pi$.
\end{proof}

\subsection{Proof of \Cref{thm:exceptions}}
We now prove \Cref{thm:exceptions} by combining \Cref{thm:main-tate-thm,thm:alg-termination} and \Cref{prop:dimension-of-return}.

We first show that if $A$ has an exceptional class, then the weighted permutation representation associated to $A$ is one of those listed in \cite{OurElectronic} (these cases are listed in \Cref{app:tables}  when $g \leq 5$). In those cases when $g$ is prime the claim follows from Tankeev's result~\cite[Theorem~1.2]{Tankeev1984} (\Cref{thm:others-coros}\ref{thm:tankeev-p} of this paper). For the remaining cases $g = 4, 6$ and for each weighting $w$ we enumerate ($\Stab(w)$-conjugacy classes of) subgroups of $W_{2g}$ and compute whether $\rho = (w,G)$ is:
\begin{enumerate}[label=(\arabic*)]
\item
  geometrically simple,
\item
  exceptional, and
\item
  has a filtration $G \supseteq D \supseteq G_0 \supseteq G_1$ for which:
  \begin{enumerate}[label=(\alph*)]
  \item \label{a:stabw}
    $D \subseteq \Stab(w)$, 
  \item \label{a:str-p-adm}
    $\sum_{\j \in D\i} w(\j) \in \ZZ$, and which
  \item \label{a:is-decomp}
    satisfies the conditions of \Cref{remark:conditions-on-D}.
  \end{enumerate}
\end{enumerate}
The claim follows by combining \Cref{thm:main-tate-thm} and \Cref{prop:AV-is-admissible}. 

Now, let $g \leq 5$. It remains to show that every exceptional geometrically weighted permutation representation $\rho = (w, G)$ listed in \cite{OurElectronic} is indeed strongly $p$-admissible, and occurs for some abelian variety of dimension $g$. Note that the latter implies the former. For every case $g < 5$, we provide in \Cref{sec:examples} either an explicit LMFDB link to an abelian variety with the required weighted permutation representation or a polynomial generating a CM number field, and a prime, such that the root is the $q$-Weil number associated to the isogeny class of an appropriate abelian variety. For $g = 5$, note that \textnormal{\cite{OurElectronic}} is empty, so the claim is vacuous.
\hfill\qed

\subsection{An interesting example}
\label{sec:an-explicit-example}
Let $K$ be the CM number field of degree $12$ whose LMFDB label~\cite{lmfdb} is \href{https://www.lmfdb.org/NumberField/12.0.1093889542148001.2}{\texttt{12.0.1093889542148001.2}}, and with minimal polynomial
\begin{equation*}
  f(T) = T^{12} - 6 T^{11} + 15 T^{10} - 18 T^{9} - 3 T^{8} + 24 T^{7} + 3 T^{6} + 6 T^{5} + 33 T^{4} + 152 T^{3} + 273 T^{2} + 198 T + 51.
\end{equation*}
The Galois group of $K$ is isomorphic to $A_4 \times C_2$ (acting as the transitive group \texttt{12T6}). Let $w$ be the weight function associated to the Newton polygon $[0,0,0,\tfrac{1}{3}, \tfrac{1}{3}, \tfrac{1}{3}, \tfrac{2}{3}, \tfrac{2}{3}, \tfrac{2}{3}, 1, 1, 1]$ and consider the subgroup
\begin{equation*}
   G = \langle (\decor{2} \decor{ 4} )(\decor{3} \bar{\decor{5}} )(\decor{5} \bar{\decor{3}} )(\bar{\decor{2}} \bar{\decor{4}} ), (\decor{1} \bar{\decor{3}} \bar{\decor{4}} )(\decor{2} \bar{\decor{6}} \decor{ 5} )(\decor{3} \decor{ 4} \bar{\decor{1}} )(\decor{6} \bar{\decor{5}} \bar{\decor{2}} ) \rangle \subset W_{12}.
\end{equation*}
In the notation defined in \cite[Section~2.5.1]{us} the group $G$ is denoted \texttt{12T6.12.t.a.177}. It is a simple computation to check that the weighted permutation representation $\rho = (w, G)$ is geometrically simple, has angle rank $\delta_\rho = 3$, and $G$ is isomorphic to the Galois group of $K$. 

By an explicit computation one checks that there exists a prime $\dpr$ above $2$ in the Galois closure $L/K$ for which the ramification filtration $G \supseteq D \supseteq G_0 \supseteq G_1$ is strongly $2$-admissible. In this case $2$ is unramified in $L$ and $D \cong \ZZ/3\ZZ$. Upon running \Cref{alg:wpr-to-weil} we find that the $8$-Weil polynomial
\begin{align*}
  P_8(T) = T^{12} - 12 T^{11} + 75 &T^{10} - 351 T^9 + 1392 T^8 - 4692 T^7 + 13912 T^6 - 37536 T^5 \\ &+ 89088 T^4 - 179712 T^3 + 307200 T^2 - 393216 T + 262144
\end{align*}
is a defining polynomial for $K$.

Indeed, there also exists a prime $\dpr$ above $3$ in the Galois closure $L/K$ for which the ramification filtration $G \supseteq D \supseteq G_0 \supseteq G_1$ is strongly $3$-admissible. In this case $D \cong G_0 \cong G_1 \cong \ZZ/3\ZZ$. Upon running \Cref{alg:wpr-to-weil} we find that the $3$-Weil polynomial
\begin{equation*}
  P_3(T) = T^{12} - 3 T^{11} + 14 T^9 - 21 T^8 - 27 T^7 + 120 T^6 - 81 T^5 - 189 T^4 + 378 T^3 - 729 T + 729
\end{equation*}
is a defining polynomial for $K$. See the file \href{https://github.com/SamFrengley/exceptional-tate-classes/blob/main/scripts/6-2-example.m}{\texttt{6-2-example.m}} in \cite{OurElectronic} for the computations verifying this example.

\begin{proof}[Proof of \Cref{coro:angle-corank-3}]
  By \Cref{prop:dimension-of-return} the $8$-Weil (respectively $3$-Weil) polynomial $P_8(T)$ (respectively $P_3(T)$) corresponds through the Honda--Tate theorem to an (isogeny class of an) abelian $6$-fold $A/\FF_8$ (respectively $A/\FF_3$) with endomorphism algebra $\End^0(A) \cong K$. In both cases the weighted permutation representation associated to $A$ is, by construction, $\rho$ (as defined above). In particular, by \Cref{lemma:geom-irred,lemma:ar-lemma}, $A$ is geometrically simple and has angle rank $3$. Finally a direct calculation shows that $\rho$ is not exceptional, and therefore $A$ carries no exceptional classes (by \Cref{thm:main-tate-thm}).
\end{proof}

\begin{remark}
  This example is particularly interesting because one can check (similarly to the proof of \Cref{thm:exceptions}) that $\rho$ is the \emph{only} strongly $p$-admissible weighted permutation representation associated to a geometrically simple abelian variety $A/\Fq$ such that:
  \begin{enumerate}
  \item
    $\End^0(A)$ is commutative,
  \item
    $g = 6$,
  \item
    the angle rank of $A$ is non-maximal (i.e., $\delta_A < 6$), and
  \item
    there are no exceptional classes in $H_\et^{2r}(A_{\Fqbar}, \QQ_\ell(r))$ for all $1 \leq r \leq g$.
  \end{enumerate}
  Moreover, by \Cref{thm:exceptions} every geometrically simple abelian variety of dimension $g < 6$ satisfying {\cond} and with no exceptional classes has angle rank $g$ or $g - 1$ (the latter only occurring when $g=3,5$).
\end{remark}

\subsection{Proof of \Cref{thm:CM-q-weil}}
\label{sec:proof-thm-CM}
\Cref{thm:CM-q-weil} follows from \Cref{prop:CM-q-weil-not-V4,prop:CM-q-weil-V4}. The first deals with the general case and the latter deals with the exceptional case when $K$ has Galois group $C_2 \times C_2$.

\begin{proposition}
  \label{prop:CM-q-weil-not-V4}
  Let $K/\QQ$ be a CM number field of degree $2g$ where $g$ is a prime number and suppose that the Galois group of $K$ is not isomorphic to $C_2 \times C_2$. Then there exists a prime power $q = p^k$ and a geometrically simple ordinary abelian variety $A/\Fq$ of dimension $g$ such that $K \cong \End^0(A)$.
\end{proposition}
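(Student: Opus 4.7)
The plan is to apply Algorithm \ref{alg:wpr-to-weil} with a carefully chosen input. Let $L$ be the Galois closure of $K$ and set $G = \Gal(L/\QQ)$. I will take $w$ to be the ordinary weight function on $X_{2g}$, namely $w(\decor{i}) = 0$ for $\decor{i} \in \{\decor{1}, \ldots, \decor{g}\}$ and $w(\bar{\decor{i}}) = 1$, and embed $G \hookrightarrow W_{2g}$ via the action of $G$ on $\Hom(K, L)$ after identifying $\{\decor{1}, \ldots, \decor{g}\}$ with a chosen CM type $\Phi \subset \Hom(K, L)$.

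The crucial first step is choosing $\Phi$ so that $\rho = (w, G)$ is geometrically simple, which by \Cref{lemma:geom-irred} together with Shimura--Taniyama theory is equivalent to $\Phi$ being a primitive CM type. When $g$ is an odd prime, since $[K^+:\QQ] = g$ is prime, any proper CM subfield of $K$ other than $\QQ$ must be imaginary quadratic, and $K$ can contain at most one such subfield (having two would force $[K:\QQ] \geq 4g$, contradicting $g$ odd). A counting argument shows that at least $2^g - 2 > 0$ of the $2^g$ CM types on $K$ are primitive. When $g = 2$, the hypothesis that $\Gal(L/\QQ)$ is not $C_2 \times C_2$ restricts to the cases $C_4$ and $D_4$; a direct subgroup analysis shows that in both the unique proper subfield of $K$ is $K^+$ (which is totally real), so $K$ has no imaginary quadratic subfield and every CM type of $K$ is primitive.

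Next, I will invoke Chebotarev density applied to $L/\QQ$ to produce a prime $p$ that splits completely in $L$, and take $\dpr \subset \OO_L$ to be any prime above $p$. Then the decomposition group $D_\dpr$ is trivial, hence automatically contained in $\Stab(w)$, and the trivial filtration is a weak $p$-admissible filtration of $\rho$. Strong $p$-admissibility is automatic since $w$ takes integer values on each orbit of $D$. Feeding $\rho$, $K$, the identification $\Aut(L) \cong G$, and $\dpr$ into \Cref{alg:wpr-to-weil} then produces, by \Cref{thm:alg-termination}, a $q$-Weil number $\pi \in \OO_K$ (where $q = p^k$ for some $k$) generating $K$ over $\QQ$ and having $\rho$ as its associated weighted permutation representation.

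To finish, I will observe that since $\pi$ lies in the CM field $K$ of degree $2g$, the constant term of its minimal polynomial equals $\prod_{\iota} \iota(\pi) = \Nm_{K/\QQ}(\pi) = q^g > 0$, so $\varepsilon_\pi = 1$. Combined with $k_\pi = 1$ from strong $p$-admissibility, \Cref{prop:dimension-of-return} produces an abelian variety $A/\Fq$ of dimension exactly $g$ attached to $\pi$ via Honda--Tate, with $\End^0(A) = K$ (by the usual dimension/center identity for simple abelian varieties with a Weil number generating a field of degree $2\dim A$). The variety $A$ is ordinary by the choice of $w$ and geometrically simple via \Cref{lemma:geom-irred}. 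The main obstacle in this plan is the second paragraph: establishing the existence of a primitive CM type, where the hypothesis excluding $C_2 \times C_2$ is essential, because in the biquadratic case every CM type of $K$ is induced from one of the two imaginary quadratic subfields of $K$.
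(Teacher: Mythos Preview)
Your argument is correct and follows the paper's overall strategy (choose an ordinary weight, pick a totally split prime via Chebotarev, and run \Cref{alg:wpr-to-weil}), but the way you establish that $\rho=(w,G)$ is geometrically simple is genuinely different.

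The paper does not mention CM types at all. Instead it chooses a $W_{2g}$-conjugate of $G$ that is \emph{not} contained in $\operatorname{Sym}(\{\decor{1},\dots,\decor{g}\})\times\langle\iota\rangle$ (asserting, without further detail, that this is possible precisely when $G\not\cong C_2\times C_2$), observes that this forces $\delta_\rho>1$, and then invokes the refined Tankeev result \Cref{prop:refined-gen-tankeev} to conclude $\delta_\rho=g$, hence $\rho$ is geometrically simple. So the paper leverages its own angle-rank machinery, whereas you import the classical notion of CM-type primitivity. The two conditions are in fact equivalent in the ordinary case (choosing a $\Stab(w)$-conjugate of $G$ amounts to choosing a CM type, and ``not contained in $S_g\times\langle\iota\rangle$'' translates to ``not induced from an imaginary quadratic subfield''), so the approaches converge, but your counting of primitive CM types and the subfield analysis for $g=2$ supply a justification that the paper leaves implicit. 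Conversely, the paper's route stays entirely inside the combinatorics of weighted permutation representations and avoids appealing to external CM theory.

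One caution: your appeal to ``\Cref{lemma:geom-irred} together with Shimura--Taniyama theory'' for the equivalence between primitivity of $\Phi$ and geometric simplicity of $\rho$ is slightly circular as phrased, since \Cref{lemma:geom-irred} is a statement about abelian varieties, which is what you are trying to construct. The equivalence you need is the purely group-theoretic one, namely that the right stabiliser of the lift $\widetilde\Phi\subset G$ equals $H$ if and only if $\Stab_G(\Phi_\rho(\decor{1}))=\Stab_G(\decor{1})$; this is straightforward to check directly from the formula for $\Phi_\rho$ with the ordinary weight, and you should say so rather than routing through Shimura--Taniyama.

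Finally, your observation that $\varepsilon_\pi=1$ because the constant term $\Nm_{K/\QQ}(\pi)=q^g>0$ is a small sharpening: the paper only concludes that one of $\pi$ or $\pi^2$ gives dimension $g$.
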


\begin{proof}
  Let $L$ be the Galois closure of $K$. Since $K$ is a CM field the Galois group $\Gal(L/\QQ) \subseteq W_{2g}$ is a well-defined $W_{2g}$-conjugacy class. Choose a representative $G \subseteq W_{2g}$ for this conjugacy class, so that $G$ is not contained in $\operatorname{Sym}(\{\1, \dots \g\}) \times \langle \iota \rangle \subset W_{2g}$ where $\iota = (\1\bar\1)\dots(\g\bar\g)$ is the complex conjugation element (such a subgroup $G$ exists since $\Gal(L/\QQ) \not\cong C_2 \times C_2$). Let $\rho = (w, G)$ be the corresponding ordinary weighted permutation representation. It follows that the angle rank of $\rho$ satisfies $\delta_{\rho} > 1$. By \Cref{prop:refined-gen-tankeev}, the weighted permutation representation $\rho = (w,G)$ has angle rank $g$, and is therefore geometrically simple.

  By the Chebotarev density theorem there exists a prime number $p$ which is totally split in $L$. Fix a prime $\dpr \subset \OO_L$ above $p$ so that, in particular, the decomposition group at $\dpr$ is trivial and therefore the ramification filtration of $G$ with respect to $\dpr$ is a weakly $p$-admissible filtration of $\rho$. But, since $\rho$ is ordinary, $w(X_{2g}) = \{0,1\}$ and this filtration is automatically a strong $p$-admissible filtration. \Cref{thm:alg-termination} implies that \Cref{alg:wpr-to-weil} terminates on input, $\rho$, $K$, and $\dpr$. Combining \Cref{lemma:geom-irred} and \Cref{prop:dimension-of-return} and \Cref{prop:dimension-of-return}, we see that this output or its square corresponds to a geometrically simple ordinary abelian variety of dimension $g$, as required. 
\end{proof}

\begin{proposition}
  \label{prop:CM-q-weil-V4}
  Let $K$ be a CM quartic field with Galois group $\Gal(K/\QQ) \cong C_2 \times C_2$. Then there exists a prime power $q$ and an abelian surface $A/\Fq$ such that $K \cong \End^0(A)$ if and only if $K$ contains $\QQ(i)$ or $\QQ(\zeta_3)$ where $\zeta_3$ is a primitive $3^{\text{rd}}$-root of unity. Moreover, $A$ can be taken to be ordinary.
\end{proposition}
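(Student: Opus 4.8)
The plan is to prove the two implications separately. The ``if'' direction is an explicit construction exploiting the fact that $\QQ(i)$ and $\QQ(\zeta_3)$ carry roots of unity other than $\pm 1$; the ``only if'' direction rests on the observation that a biquadratic CM quartic field can only be the endomorphism algebra of an abelian surface that is \emph{not} geometrically simple, and that this non-simplicity forces a root of unity into $K$.

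\textbf{The ``if'' direction.} Suppose $\QQ(i) \subseteq K$ (the case $\QQ(\zeta_3) \subseteq K$ is verbatim the same, with $\zeta_3$ in place of $i$). As $K$ is biquadratic it has exactly two imaginary quadratic subfields; at least one, say $F$, is distinct from $\QQ(i)$, and then $K = F(i)$. By the Chebotarev density theorem I would choose a rational prime $p$ which splits in $F$ as $p\OO_F = \mathfrak{p}\overline{\mathfrak{p}}$ with $\mathfrak{p} = \pi_0\OO_F$ principal, discarding the finitely many $p$ for which the generator $\pi_0$ would be rational or purely imaginary (so that $\pi_0^2 \notin \QQ$). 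Put $\pi \colonequals i\,\pi_0^2 \in K$ and $q \colonequals p^2$. Then I would check in turn: $\pi\overline{\pi} = (i)(-i)(\pi_0\overline{\pi_0})^2 = q$, so $\pi$ is a $q$-Weil number; $\pi^2 = -\pi_0^4$ generates $F$ over $\QQ$, so $F \subseteq \QQ(\pi)$ and hence $i = \pi/\pi_0^2 \in \QQ(\pi)$, forcing $\QQ(\pi) = F(i) = K$; and, since $p$ splits in $F$, the element $\pi_0$ has normalized $p$-adic valuations in $\{0,\tfrac12\}$, so $\pi$ (being $i$ times $\pi_0^2$) has valuations in $\{0,1\}$, hence is ordinary with integral local invariants at $p$. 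Consequently $\varepsilon_\pi = 1$ (since $\Nm_{K/\QQ}(\pi) = q^2 > 0$) and $k_\pi = 1$, so by \Cref{prop:dimension-of-return} the abelian variety attached to $\pi$ by Honda--Tate has dimension $g = 2$, is ordinary, and has $\End^0(A) \cong \QQ(\pi) = K$.

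\textbf{The ``only if'' direction.} Let $A/\Fq$ be an abelian surface with $\End^0(A) \cong K$. Since $\End^0(A)$ is a field of $\QQ$-dimension $4 = 2\dim A$, $A$ is simple and $\pi = \pi_A$ generates $K$. First I would show $A$ is not geometrically simple: otherwise $A_{\Fqbar}$ is a geometrically simple abelian surface with $\End^0(A_{\Fqbar}) = K$ (degree $4$ being the maximum), hence of CM type $(K,\Phi)$; but on a biquadratic CM quartic field every CM type is induced from an imaginary quadratic subfield, so $(K,\Phi)$ is imprimitive and the CM abelian surface it defines --- and therefore $A_{\Fqbar}$ --- is isogenous to a product of elliptic curves, a contradiction. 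Thus $A_{\Fqbar} \sim E^2$; passing to the extension $\FF_{q^N}$ over which this holds, the (central) Frobenius $\pi^N$ of $A_{\FF_{q^N}}$ lies in $\End^0(E)$, a field of degree $\le 2$, so $\QQ(\pi^N) \subsetneq K$. As $K/\QQ$ is Galois there is a non-trivial $\tau \in \Gal(K/\QQ)$ with $\tau(\pi) = \zeta\pi$ for a root of unity $\zeta \neq 1$ of $K$. If $\zeta$ has order $> 2$ then, $K$ being quartic and biquadratic, $\QQ(\zeta) \in \{\QQ(i), \QQ(\zeta_3), \QQ(\zeta_8), \QQ(\zeta_{12})\}$, each of which contains $\QQ(i)$ or $\QQ(\zeta_3)$, and we are done. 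The remaining possibility is $\zeta = -1$, i.e.\ $\tau(\pi) = -\pi$: then $\pi^2$ lies in an imaginary quadratic subfield $M$ with $\QQ(\pi^2) = M$, the Frobenius eigenvalues are $\pm\pi, \pm\overline{\pi}$, and $P_A(T) = T^4 + aT^2 + q^2$ with $a = -(\pi^2+\overline{\pi}^2) \in \ZZ$. Here I would argue, using the norm relation $\Nm_{M/\QQ}(\pi^2) = q^2$, the identification $K = M(\sqrt{\pi^2})$, and the requirement that the weighted permutation representation of $\pi$ be strongly $p$-admissible (so that the attached abelian variety really has dimension $2$, cf.\ \Cref{prop:dimension-of-return}), that $K$ must again contain $\QQ(i)$ or $\QQ(\zeta_3)$.

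\textbf{Main obstacle.} The hard part is precisely the case $\zeta = -1$ in the ``only if'' direction --- equivalently, the analysis of abelian surfaces with $P_A(T) = T^4 + aT^2 + q^2$. There the ``root of unity in $K$'' mechanism only produces $-1$, which is present in every CM field, so one must instead combine the explicit structure $\End^0(A_{\Fqbar}) = M_2(F)$ with the integrality and admissibility conditions at $p$ to exclude the biquadratic CM fields not containing $\QQ(i)$ or $\QQ(\zeta_3)$. By contrast the ``if'' direction and the reduction steps preceding this last case are routine.
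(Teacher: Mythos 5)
Your ``if'' direction and your reduction of the ``only if'' direction to the case $\zeta=-1$ both essentially match the shape of the paper's argument, modulo two points worth noting. First, in the ``if'' direction the paper's construction is simpler and cleaner: it takes $\pi = \zeta\alpha$ where $\alpha\in\OO_{K_2}$ satisfies $\alpha\overline\alpha = p$, so $\pi\overline\pi = p$ directly; your version squares $\pi_0$ (so $q=p^2$) unnecessarily, and this extra step breaks the claimed ``verbatim'' translation to $\QQ(\zeta_3)$: there $\pi^2 = \zeta_3^2\pi_0^4$ does \emph{not} lie in $F$, so your argument that $F\subseteq\QQ(\pi)$ has to be repaired. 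Second, for the non-geometric-simplicity step you argue via imprimitivity of CM types on a biquadratic field, whereas the paper verifies directly that every weighted permutation representation $(w,G)$ with $G\cong C_2\times C_2$ acting transitively on $X_4$ which arises from an abelian surface fails to be geometrically simple; both routes reach the same conclusion (\Cref{lemma:geom-irred} translates between them), and your CM-type argument is arguably more classical.

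The genuine gap is exactly where you flagged it: the case $\zeta=-1$, i.e.\ $\sigma(\pi) = -\pi$ for some nontrivial $\sigma\in\Gal(K/\QQ)$. The paper disposes of this in one sentence, asserting that $\pi^2 = \alpha$ is a totally negative element of $K^+$, hence $\alpha=-q$ and $\pi = \sqrt{-q}$ has degree $\le 2$, a contradiction. However, you are right to be uneasy about this case, because the asserted inclusion $\pi^2\in K^+$ requires the witnessing automorphism $\sigma$ to be complex conjugation $c$ (since $\pi^2$ is fixed by $\sigma$, and $K^+=K^{\langle c\rangle}$), yet when $A$ is ordinary and $\End^0(A)=K$ is commutative, $\pi\OO_L$ and $\overline\pi\OO_L$ are coprime, so the relation $\pi\OO_L = \sigma(\pi)\OO_L$ forces $\sigma\ne c$; thus $\pi^2$ actually lies in an \emph{imaginary} quadratic subfield $M = K^{\sigma}$, not in $K^+$. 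The correct closing argument therefore needs what you sketch: write $\beta = \pi^2\in M$, use ordinariness to see $\beta\OO_M = \overline{\mathfrak p}^{\,2k}$ is a pure prime power (with $p$ split in $M$), note that the unit group $\OO_M^\times = \{\pm 1\}$ because $M\ne\QQ(i),\QQ(\zeta_3)$ (this is where the hypothesis on $K$ enters), and show that $K = M(\sqrt\beta)$ together with $\Nm_{M/\QQ}(\beta)=q^2$ and $\pi\overline\pi = q$ forces $\beta = -q$ and hence $[\QQ(\pi):\QQ]\le 2$. Since you stop short of carrying this through, the ``only if'' direction is incomplete as written, and the missing step is not a formality: it is precisely the place where the non-existence of roots of unity other than $\pm1$ must be exploited. (One should also check that the almost-ordinary Newton polygon $[0,\tfrac12,\tfrac12,1]$ cannot occur for $G\cong C_2\times C_2$ --- there the representation \emph{is} geometrically simple, and it is the failure of strong $p$-admissibility, via \Cref{prop:AV-is-admissible}, that rules it out.)
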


\begin{proof}
  Suppose that $K$ does not contain a root of unity except $\pm 1$, i.e., $K$ does not contain $\QQ(i)$ or $\QQ(\zeta_3)$. By direct calculation we see that for every weighting $w \colon X_{4} \to \QQ$ and transitive subgroup $G \subset W_{4}$ with $G \cong C_2 \times C_2$ the corresponding weighted permutation representation $\rho = (w, G)$ is not geometrically simple. Suppose in search of contradiction that $\pi \in K$ is a $q$-Weil number generating $K$. Since $\rho$ is not geometrically simple there exists a Galois conjugate $\pi' \in K$ of $\pi$ such that $\pi = \zeta\pi'$ for some root of unity $\zeta \in K$. But then $\pi = - \pi'$ and $\pi^2 = \alpha$ for some totally negative element $\alpha \in K^+$. Since $\pi$ is a $q$-Weil number, so too is $\alpha$. It follows that $\alpha = -q$ and $\pi = \sqrt{-q}$ which is a contradiction because $\sqrt{-q}$ does not generate a degree $4$ field.

  Now suppose that $K$ contains $\QQ(i)$ or $\QQ(\zeta_3)$. Since $K$ is a CM quartic field with Galois group $C_2 \times C_2$ it contains two distinct totally imaginary quadratic subfields $K_1, K_2 \subset K$. Without loss of generality we may assume that $K_1 = \QQ(i)$ or $\QQ(\zeta_3)$ and let $\zeta = i$, respectively $\zeta = \zeta_3$. Choose a prime number $p$ such that $p$ splits as a product $p\OO_{K_2} = \mfp \mfp'$. Note that $\mfp$ is principal if and only if $p$ is totally split in the Hilbert class field of $K_2$, so by the Chebotarev density theorem we may further assume that $\mfp$ is principal. Let $\alpha$ be a generator of $\mfp$, scaled so that $\alpha\oalpha = p$. Writing $\pi = \zeta \alpha$, we see that $\pi$ is an ordinary $p$-Weil number and a primitive element of $K$.
\end{proof}

\subsection{Proof of \Cref{thm:gen-CM-q-Weil}}
As in the statement of \Cref{thm:gen-CM-q-Weil} let $K$ be a CM field of dimension $2g$ where $g$ is a positive integer. Let $L$ be the Galois closure of $K$. Since $K$ is a CM field the Galois group $\Gal(L/\QQ) \subseteq W_{2g}$ is a well-defined $W_{2g}$-conjugacy class. Choose a representative $G \subseteq W_{2g}$ for this conjugacy class.

Similarly to the proof of \Cref{thm:CM-q-weil} we employ \Cref{thm:alg-termination} to produce a $q$-Weil number with our desired properties. However, in contrast to the proof of \Cref{thm:CM-q-weil} we ensure the $\rho = (w, G)$ is geometrically simple by adjusting the weight function $w$. In this case we lose control of the denominators in \Cref{prop:dimension-of-return} (thus losing the commutativity of the endomorphism algebra of the corresponding abelian variety).

\begin{lemma}
  \label{lemma:NP-lemma}
  Let $G \subseteq W_{2g}$ be a transitive subgroup, and let $w$ be the weight function associated to the Newton polygon with (distinct) slopes $s_1,\dots, s_m$ and horizontal lengths $l_1,\dots, l_m$. Then if $\gcd(l_1,\dots,l_m) = 1$, the weighted permutation $\rho = (w,G)$ is geometrically simple.
\end{lemma}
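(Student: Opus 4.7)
The plan is to characterize geometric simplicity via an equivalence relation on $X_{2g}$ and then show that the hypothesis on $\gcd(l_1,\dots,l_m)$ forces this relation to be trivial. First I would unpack the definition: by the $G$-equivariance of $\Phi_\rho$ (\Cref{lemma:Phi-Gequiv}), an element $\tau \in G$ lies in $\Stab_G(\Phi_\rho(\1))$ if and only if $\Phi_\rho(\tau(\1)) = \Phi_\rho(\1)$. Reading off the coefficient of each $\sigma \in G$ in \Cref{defn:Phi} (and substituting $\gamma = \sigma^{-1}$), this equality is equivalent to $w(\gamma\tau(\1)) = w(\gamma(\1))$ for every $\gamma \in G$. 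Hence $\rho$ is geometrically simple precisely when any such $\tau$ must satisfy $\tau(\1) = \1$.

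Motivated by this, I would introduce the equivalence relation $\sim$ on $X_{2g}$ defined by $x \sim y$ if and only if $w(\gamma x) = w(\gamma y)$ for all $\gamma \in G$. Two observations are key. First, $\sim$ is $G$-invariant directly from its definition, so by the transitivity of $G$ on $X_{2g}$ its classes form a system of blocks of imprimitivity, all of the same cardinality $d$ for some $d \mid 2g$. Second, specializing $\gamma$ to the identity in the definition of $\sim$ shows that $w$ is constant on each $\sim$-class. Consequently, for each slope $s_k$ the level set $w^{-1}(s_k)$ is a union of $\sim$-classes, and therefore $d \mid l_k$.

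Combining these observations, $d$ divides $\gcd(l_1,\dots,l_m) = 1$, so $d = 1$ and the $\sim$-class of $\1$ is the singleton $\{\1\}$. By the first paragraph this says $\Stab_G(\Phi_\rho(\1)) = \Stab_G(\1)$, i.e., $\rho$ is geometrically simple. I do not expect a real obstacle: the main conceptual step is recognizing that $\sim$ simultaneously produces a system of $G$-blocks and refines the fibers of $w$, after which the divisibility argument is immediate.
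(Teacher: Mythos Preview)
Your proposal is correct and is essentially the same argument as the paper's. The paper partitions $X_{2g}$ by the relation $\Phi_\rho(x)=\Phi_\rho(y)$, which (as you unpack) is exactly your relation $\sim$; it then observes the classes all have the same size, are contained in the fibers of $w$, and hence have cardinality dividing $\gcd(l_1,\dots,l_m)=1$.
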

\begin{proof}
  Let
  \[
    X_{2g} = T_1 \sqcup \dots \sqcup T_n
  \]
  be the partition such that $\Phi_{\rho}(x) = \Phi_{\rho}(y)$ if and only if $x,y$ are contained in the same subset $T_{i}$. Note that each of the sets $T_{i}$ have the same cardinality and it suffices to show that $|T_i| = 1$. Note that for all $1 \leq i \leq n$ and $x,y \in T_i$ we have $w(x) = w(y)$. Therefore $|T_i|$ divides $l_{i}$ for all $1 \leq i \leq m$ and thus $|T_i| = 1$, as required.
\end{proof}

\begin{proof}[Proof of \Cref{thm:gen-CM-q-Weil}]
  Note that for all positive integers $g$ there exists a Newton polygon satisfying the conditions of \Cref{lemma:NP-lemma}. Indeed, we can take the ``K3 type'' Newton polygon from \Cref{thm:others-coros}\ref{thm:zarhin}. Setting $w$ to be the corresponding weight function, the weighted permutation representation $\rho = (w,G)$ is geometrically simple (by \Cref{lemma:NP-lemma}).

  By the Chebotarev density theorem there exists a prime number $p$ which is totally split in $L$, and we fix a prime $\dpr \subset \OO_L$ above $p$, so that, in particular, the decomposition group at $\dpr$ is trivial and $\rho$ is weakly $p$-admissible. \Cref{thm:alg-termination} implies that \Cref{alg:wpr-to-weil} terminates on input, $\rho$, $K$, and $\dpr$ returning a $q$-Weil number $\pi$. Let $A$ be an (isogeny class of an) abelian variety corresponding to $\pi$. By \Cref{prop:dimension-of-return}, if $w$ is chosen to correspond to the ``K3 type'' Newton polygon, then the dimension of $A$ is either $g$ or $2g$ (since $w(X_{2g}) = \{0,1/2,1\}$).

  The same argument as \Cref{lemma:geom-irred} shows that $A$ is geometrically simple (even when $\End^0(A)$ is not commutative).
\end{proof}

\begin{remark}
  When $G \not\cong C_2 \times C_2$, it would be interesting to strengthen \Cref{thm:gen-CM-q-Weil} to require $\End^0(A)$ to be commutative. By \Cref{thm:alg-termination} and \Cref{prop:dimension-of-return} it suffices to find a weight function $w$ and $W_{2g}$-conjugate of $G$ such that $\rho = (w, G)$ is strongly $p$-admissible and geometrically simple.
\end{remark}

\section{Relations to inverse Galois problems}
\label{sec:inverse-galo-probl}
In light of \Cref{alg:wpr-to-weil} and \Cref{thm:alg-termination} we now state a precise theorem (in the spirit of the inverse Galois problems posed in \cite[Conjecture~2.7]{DupuyKedlayaRoeVincent22} and \cite[Conjecture~7.1]{us}) describing the weighted permutation representations which may arise from a geometrically simple abelian variety.

\begin{theorem}
  \label{prop:ord-igp}
  Let $\rho = (w,G)$ be a geometrically simple weighted permutation representation. Let $p$ be a prime. The following are equivalent:
  \begin{enumerate}[label=(\roman*)]
  \item \label{it:av-exists}
    there exists a prime power $q = p^k$, a geometrically simple abelian variety $A/\Fq$ of dimension $g$, and an indexing of the Frobenius eigenvalues of $A$ such that $\rho$ is the weighted permutation representation corresponding to $A$, and 
  \item \label{it:CM-field-exists}
    there exists a CM field $K$ with Galois group $G$, a choice of conjugacy class $G \subseteq W_{2d}$, and a prime $\dpr$ in the Galois closure of $K$ such that $D \subseteq G$ is the decomposition group of $\dpr$, $\sum_{\decor{j} \in D\decor{i}}w(\decor{j}) \in \ZZ$, and $D \subseteq \Stab(w)$.
  \end{enumerate}
\end{theorem}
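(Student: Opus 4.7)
For \ref{it:av-exists} $\Rightarrow$ \ref{it:CM-field-exists}, this direction is essentially a restatement of \Cref{prop:AV-is-admissible}. Taking $K = K_A$ (a CM field of degree $2g$ by {\cond}) and any prime $\dpr \in \cP_L$ in the Galois closure of $K$, the condition $D \subseteq \Stab(w)$ is immediate from \Cref{prop:decomp-group}\ref{i:NP-stab}, and the integrality $\sum_{\j \in D \i} w(\j) \in \ZZ$ is \Cref{prop:decomp-group}\ref{i:vertical-length}.

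For the converse \ref{it:CM-field-exists} $\Rightarrow$ \ref{it:av-exists}, the plan is to feed the data of \ref{it:CM-field-exists} directly into \Cref{alg:wpr-to-weil} and then invoke \Cref{thm:alg-termination,prop:dimension-of-return}. First I would check that the hypotheses in \ref{it:CM-field-exists} supply precisely the inputs required by the algorithm: the conjugacy class $G \subseteq W_{2g}$ together with the CM field $K$ determines an isomorphism $\Aut(L) \cong G$ (up to $\Stab(w)$-conjugation), and the hypothesis $D \subseteq \Stab(w)$ is exactly input \ref{i:input4}. Letting $G_0$ and $G_1$ denote the tame and wild inertia subgroups of $\dpr$, the surjection $\Gal(\overbar{\QQ}_p/\QQ_p) \twoheadrightarrow D$ arising from completion at $\dpr$ shows that $G \supseteq D \supseteq G_0 \supseteq G_1$ is a weak $p$-admissible filtration of $\rho$; the additional integrality hypothesis then upgrades this to a strong $p$-admissible filtration in the sense of \Cref{def:p-admissible-filtration}.

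Running the algorithm on this input produces, by \Cref{thm:alg-termination}, a $q$-Weil number $\pi \in K$ (for some prime power $q$ of $p$) whose associated weighted permutation representation, for some indexing of its conjugates, is exactly $\rho$. Applying \Cref{prop:dimension-of-return} then translates the strong $p$-admissibility of the filtration into the statement that at least one of $\pi$ or $\pi^2$ corresponds via Honda--Tate to an abelian variety $A$ over a finite field of characteristic $p$ with $\dim A = g$. Since passage from $\pi$ to $\pi^2$ preserves the underlying weighted permutation representation (the normalized $p$-adic valuations rescale by the same factor as $q$), the resulting abelian variety $A$ still realizes $\rho$, and geometric simplicity of $A$ follows from \Cref{lemma:geom-irred}. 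The main step is verifying that $G \supseteq D \supseteq G_0 \supseteq G_1$ is a strong $p$-admissible filtration; once that is in hand, everything else is packaged into \Cref{thm:alg-termination,prop:dimension-of-return}, and the only care needed is the elementary bookkeeping around the $\pi$-versus-$\pi^2$ dichotomy.
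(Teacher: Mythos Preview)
Your proposal is correct and follows the same approach as the paper's own proof, which tersely cites \Cref{prop:AV-is-admissible} for \ref{it:av-exists}$\Rightarrow$\ref{it:CM-field-exists} and \Cref{alg:wpr-to-weil} together with \Cref{thm:alg-termination} for the converse. You have simply unpacked the details the paper leaves implicit---in particular the invocation of \Cref{prop:dimension-of-return} to force $\dim A = g$, the $\pi$-versus-$\pi^2$ bookkeeping, and the appeal to \Cref{lemma:geom-irred} for geometric simplicity---all of which are correct and indeed necessary to make the argument complete.
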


\begin{proof} 
  The claim \ref{it:av-exists} implies \ref{it:CM-field-exists} follows from \Cref{prop:AV-is-admissible}. The reverse implication follows from \Cref{alg:wpr-to-weil} and \Cref{thm:alg-termination}.
\end{proof}

Suppose we are given a weighted permutation representation $\rho = (w,G)$ and a strongly $p$-admissible filtration $G_1 \subseteq G_0 \subseteq D \subseteq G$ of $\rho$. Then there exists a surjection $\Gal(\overbar{\QQ}_p / \QQ_p) \rightarrow D$ such that the image of tame inertia is $G_1$ and the image of wild inertia is $G_0$. This surjection gives rise to a local  Galois extension $F/\QQ_p$ with Galois group $D$. If $\rho$ is geometrically simple and this local extension arises from a global CM-extension, then by \Cref{alg:wpr-to-weil} and \Cref{thm:alg-termination} it would give rise to a $q$-Weil number. The key question, therefore, is a local-to-global problem known as the Grunwald problem, which we phrase precisely here.

\begin{question}[The Grunwald problem]
\label{question:grunwald}
Let $G \subseteq S_n$ and let $S$ be a finite set of places of $\QQ$. For $v\in S$, let $F_v/\QQ_v$ be a Galois extension with embeddings $\Gal(F_v/\QQ_v) \rightarrow G$. Does there exist a Galois extension $L/\QQ$ with $\Gal(L/\QQ) \cong G$ such that $L_v \cong F_{v}^{\oplus \deg(L)/\deg(F_v)}$ for all $v \in S$?
\end{question}

In our case we let $S = \{p,\infty\}$. The question then becomes: when does there exist a Galois CM field $L/\QQ$ with $\Gal(L/\QQ)$ conjugate to $G$ and a prime $\mfP$ above $p$ such that the decomposition group at $\mfP$ is $D$?

The Grunwald problem very subtle. It was shown by Wang~\cite{wang} that when $p = 2$ local extensions may not lift to global ones. For example, class field theory shows that there are no Galois $C_8$-extensions $L/\QQ$ in which $2$ is inert, but there does exist a $C_8$-extension of $\QQ_2$. When $G$ is abelian, this failure of local lifting only occurs at $p = 2$ \cite{wang}. For general $G$ it remains unclear precisely when local extensions lift to global extensions (even when $p \neq 2$). However, when $G$ is supersolvable and $p$ does not divide $\lvert G\rvert$, Corollary 4.14 of \cite{harpaz-wittenberg:supersolvable} gives an affirmative answer to \Cref{question:grunwald}. 

The following conjecture is drawn from the literature on the Grunwald problem (e.g., \cite[Section~1.2]{dan} and \cite[pp. 1]{BN-tameGP}). 

\begin{conjecture}[Tame approximation]
\label{conj:tame-grunwald}
Let $G$ be a finite group and let $S$ be a finite set of places of $\QQ$ not dividing the cardinality of $G$. For each $v \in S$, let $F_{v}/\QQ_v$ be a Galois extension with an embedding $\Gal(F_{v}/\QQ_v) \hookrightarrow G$. Then there exists a Galois extension $L/\QQ$ with $\Gal(L/\QQ) \cong G$ and such that $L_v \cong F_{v}^{\oplus \deg(L)/\deg(F_v)}$ for all $v \in S$.
\end{conjecture}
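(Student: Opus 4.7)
The plan is to attack \Cref{conj:tame-grunwald} by d\'evissage through a composition series of $G$, reducing to the cases where $G$ is either abelian or nonabelian simple. The tame hypothesis $p \nmid |G|$ for all $p$ ramified in some $F_v$ is the essential input which allows this reduction: it rules out the Wang-type obstructions at $p=2$, and more generally ensures that every subgroup, quotient, and extension we produce during the induction is again tame.

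First, I would handle the abelian case directly via class field theory. Given the collection $\{F_v/\QQ_v\}_{v \in S}$ with $\Gal(F_v/\QQ_v) \hookrightarrow G$, one can encode each $F_v$ as a continuous homomorphism $\chi_v \colon \QQ_v^\times \to G$. The obstruction to assembling these into a global character $\chi \colon \mathbf{A}_\QQ^\times/\QQ^\times \to G$ is precisely the Wang obstruction, which vanishes under the tame hypothesis (see \cite{wang}). Once a suitable global character exists, composing with the Artin map produces $L/\QQ$, and we enforce $\Gal(L/\QQ) \cong G$ (rather than a proper quotient) by choosing $\chi$ to be surjective, which is possible by an approximation argument combined with the Chebotarev density theorem.

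Next, for the d\'evissage step, suppose $N \triangleleft G$ with $H = G/N$ and that the conjecture is known for $H$ and for $N$. Given the local data $F_v/\QQ_v$, quotient to obtain local $H$-data and solve for a Galois extension $M/\QQ$ with $\Gal(M/\QQ) \cong H$ realizing it. The remaining task is to lift $M/\QQ$ to an $N$-extension $L/M$ such that $L/\QQ$ is Galois with group $G$ and the completions agree. This is a twisted inverse Galois problem over $M$; the tameness ensures $p \nmid |N|$, so we can iterate. The key technical input here is a strong approximation and Hilbert irreducibility result for torsors under the twist of $N$, in the spirit of Harpaz--Wittenberg (whose work on the supersolvable case is cited in the excerpt). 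Combining these ingredients reduces the general conjecture to the case where $G$ is a nonabelian finite simple group.

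The hard part, and the main obstacle, will be the nonabelian simple case, which is genuinely open. A plausible line of attack is to embed $G$ into $\operatorname{GL}_n(\FF_\ell)$ for an auxiliary prime $\ell$ coprime to $p$ and to $|G|$, and realize the specialization data using Galois representations attached to geometric objects (e.g., cohomology of moduli spaces with prescribed local monodromy). For $G$ of Lie type, one can leverage rigidity methods and Katz's middle convolution to construct local systems whose monodromy realizes the prescribed decomposition groups; for alternating and sporadic groups, one would appeal to known rigid triples and combine them with the geometric form of the tame Grunwald problem. Even in these cases one must verify that the Brauer--Manin obstruction to weak approximation on the associated classifying stack $BG$ vanishes at the primes of $S$, which is the deepest ingredient. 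A complete proof along these lines would essentially require (i) a uniform construction of geometric realizations of every finite simple group with control on local ramification, and (ii) a proof that the elementary obstruction to the tame Grunwald problem is the only one, which is currently only conjectural in general.
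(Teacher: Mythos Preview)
The statement you are attempting to prove is \emph{not} a theorem in the paper: it is explicitly stated as a conjecture (labelled \Cref{conj:tame-grunwald}) drawn from the literature on the Grunwald problem, and the paper makes no attempt to prove it. The paper only \emph{assumes} it in order to derive the conditional \Cref{coro:tame-grunwald}. So there is no ``paper's own proof'' to compare against.

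Your proposal is not a proof but a strategy sketch, and you yourself acknowledge this: the d\'evissage step for a general normal subgroup $N \triangleleft G$ requires solving a twisted embedding problem with local conditions, which does not in general reduce to the Grunwald problem for $N$ over $\QQ$ (the base field changes to $M$, and the $N$-torsor is twisted by the $G$-action), so the inductive hypothesis as stated does not apply. And as you say, the nonabelian simple case is genuinely open; the ingredients you list (uniform geometric realizations with prescribed local monodromy, vanishing of the Brauer--Manin obstruction on $BG$) are themselves major open problems. In short, nothing here constitutes a proof, which is consistent with the paper treating the statement as a conjecture rather than a result.
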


Assuming the \Cref{conj:tame-grunwald}, we arrive at the following result.

\begin{proposition}
\label{coro:tame-grunwald}
Assume \Cref{conj:tame-grunwald}. Let $\rho = (w,G)$ be a geometrically simple weighted permutation representation. Assume there exists a strongly $p$-admissible filtration $G \supseteq D \supseteq G_0 \supseteq G_1$ of $\rho$ where $p$ is coprime to $\lvert G \rvert$. Then, there there exists a prime power $q = p^k$, a geometrically simple abelian variety $A/\Fq$ of dimension $g$, and an indexing of the Frobenius eigenvalues of $A$ such that $\rho$ is the weighted permutation representation corresponding to $A$.
\end{proposition}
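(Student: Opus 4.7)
The plan is to construct a CM field whose local behavior at $p$ realizes the strongly $p$-admissible filtration, and then invoke \Cref{prop:ord-igp}. First, I would unpack the local datum at $p$. Since $G \supseteq D \supseteq G_0 \supseteq G_1$ is a strongly $p$-admissible filtration of $\rho$, by \Cref{def:p-admissible-filtration} there is a continuous surjection $\Gal(\overbar{\QQ}_p / \QQ_p) \twoheadrightarrow D$ with the prescribed inertia and wild inertia images. This corresponds to a finite Galois extension $F_p/\QQ_p$ with $\Gal(F_p/\QQ_p) \cong D$ and the chosen embedding $D \hookrightarrow G$. At the archimedean place, I would take $F_{\infty} = \CC$ with the embedding $\Gal(\CC/\RR) \hookrightarrow G$ sending complex conjugation to the element $c = (\decor{1}\bar{\decor{1}}) \cdots (\decor{g}\bar{\decor{g}}) \in G$, which lies in $G$ by the definition of a weighted permutation representation.

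Next, I would apply \Cref{conj:tame-grunwald} with $S = \{p, \infty\}$; the hypothesis that $p$ is coprime to $|G|$ makes the conjecture applicable at $p$, and the archimedean place presents no obstruction. This produces a Galois extension $L/\QQ$ with $\Gal(L/\QQ) \cong G$; after choosing a prime $\dpr \subset \OO_L$ above $p$, the decomposition group at $\dpr$ is identified with $D \subseteq G$, and complex conjugation in $\Gal(L/\QQ)$ is identified with $c$. Letting $H = \Stab_G(\decor{1})$ and $K = L^H$, the field $K$ has degree $2g$ over $\QQ$ and has $L$ as its Galois closure. Moreover, a direct calculation shows that $c$ generates the center of $W_{2g}$, so $c$ is central in $G$ and hence normalizes $H$; therefore $K$ is stable under complex conjugation, and since $c \notin H$ the field $K$ is totally imaginary. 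Thus $K$ is a CM field with the required Galois-theoretic data.

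Finally, I would verify the hypotheses of \Cref{prop:ord-igp}\ref{it:CM-field-exists}: the decomposition group of $\dpr$ is $D$ by construction, and both $D \subseteq \Stab(w)$ and $\sum_{\j \in D\i} w(\j) \in \ZZ$ follow immediately from the definition of a strongly $p$-admissible filtration. Applying \Cref{prop:ord-igp} then produces the desired prime power $q = p^k$ and geometrically simple abelian variety $A/\Fq$ of dimension $g$ realizing $\rho$.

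The principal obstacle is of course the invocation of \Cref{conj:tame-grunwald} itself, which is a deep open problem that we assume as a hypothesis; once this input is granted, the rest is essentially bookkeeping. A secondary subtlety is ensuring that the isomorphism $\Gal(L/\QQ) \cong G$ produced by the conjecture is compatible both with the embedding $G \subseteq W_{2g}$ and with the identification of complex conjugation with $c$, but this is already built into the formulation of the Grunwald problem, where the local embeddings $\Gal(F_v/\QQ_v) \hookrightarrow G$ are prescribed up to $G$-conjugacy.
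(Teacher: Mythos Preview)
Your proposal is correct and follows exactly the approach the paper intends: the paper's own proof is a single line (``This follows immediately from \Cref{prop:ord-igp} upon assuming \Cref{conj:tame-grunwald}''), and you have simply unpacked the implicit steps—constructing the local data at $p$ and $\infty$, invoking the conjecture with $S = \{p,\infty\}$ to produce the CM field $K$, and verifying the hypotheses of \Cref{prop:ord-igp}\ref{it:CM-field-exists}. Your discussion of the secondary subtleties (compatibility of the Galois embedding, centrality of $c$) is accurate and fills in details the paper leaves to the reader.
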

\begin{proof}
This follows immediately from \Cref{prop:ord-igp} upon assuming \Cref{conj:tame-grunwald}.
\end{proof}

\appendix\section{Exceptional classes in small dimension}
\label{app:tables}

\subsection{Abelian varieties with angle rank \texorpdfstring{$\delta_A < g$}{d\_A < g}}
In \Crefrange{tab:np-first}{tab:np-last} we list every weighted permutation representation associated to a geometrically simple abelian variety of dimension $2 \leq g \leq 5$ satisfying {\cond} and having non-maximal angle rank $\delta_A < g$. In these tables we describe subgroups $G \subset W_{2g}$ by listing generators for the quotient $G/ \iota$ where $\iota = (\1\bar\1)\dots(\g\bar\g)$ is the complex conjugation element (which is necessarily contained in $G$). The tables also record whether each weighted permutation representation is exceptional, i.e., whether there exists an exceptional class in $H_\et^{2r}(A_{\Fqbar}, \QQ_\ell(r))^{G_{\Fq}}$ for some $1 \leq r \leq g$ (by \Cref{thm:main-tate-thm}). Those rows in which there are no exceptional classes are colored green. The first column of each row records the $\Stab(w)$-conjugacy class of $G$ in the labelling of \cite[Section~2.5.1]{us}, and the final column records an example from the LMFDB~\cite{DupuyKedlayaRoeVincent22,lmfdb} if one exists.

Note that not every Newton polygons occurs in \Crefrange{tab:np-first}{tab:np-last}. Indeed the Newton polygons which do not occur are those for which every geometrically simple abelian variety satisfying {\cond} has angle rank $\delta_A = g$ (this follows from the calculation in the proof of \Cref{thm:exceptions}). That is, we have proved the following lemma.

\begin{lemma}
  Let  $2 \leq g \leq 6$ be an integer and fix a Newton polygon (i.e., a weight function $w$) which is not supersingular. Then the following are equivalent:
  \begin{enumerate}[label=(\roman*)]
  \item
    every geometrically simple abelian variety $A$ with weight function $w$ satisfying {\cond} has maximal angle rank, and
  \item
    the Newton polygon is listed in \Cref{tab:max-ar}.
  \end{enumerate}
\end{lemma}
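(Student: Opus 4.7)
The plan is to reduce the lemma to a finite enumeration which is essentially carried out during the proof of \Cref{thm:exceptions}. By \Cref{lemma:ar-lemma}, the angle rank of any abelian variety $A$ satisfying \cond{} equals the angle rank $\delta_\rho$ of its associated weighted permutation representation $\rho = (w, G)$. Thus the condition in (i) becomes the purely group-theoretic statement: for every $\Stab(w)$-conjugacy class of transitive subgroups $G \subseteq W_{2g}$ containing complex conjugation, if the weighted permutation representation $\rho = (w, G)$ is geometrically simple and arises from an abelian variety over some $\Fq$, then $\delta_\rho = g$.

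The first step is to enumerate, for each non-supersingular weight function $w$ on $X_{2g}$ with $2 \leq g \leq 6$, the $\Stab(w)$-conjugacy classes of admissible subgroups $G \subseteq W_{2g}$. For each such $(w, G)$, one uses \Cref{lemma:geom-irred} to test geometric simplicity and computes $\delta_\rho = \operatorname{rank} \Phi_\rho - 1$ directly from \Cref{defn:Phi}. This is a finite, albeit expensive, computation that is already implemented in the \texttt{Magma} code accompanying the paper and which underlies the construction of Tables \ref{tab:np-first}--\ref{tab:np-last} and \ref{tab:max-ar}.

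For the implication (ii) $\Rightarrow$ (i): if $w$ is listed in \Cref{tab:max-ar}, then the enumeration above shows that every geometrically simple pair $(w, G)$ has $\delta_\rho = g$, so any geometrically simple abelian variety $A$ with weight function $w$ satisfying \cond{} has $\delta_A = g$. For the converse (i) $\Rightarrow$ (ii): if $w$ is not in \Cref{tab:max-ar} then, by construction, $w$ is the weight function of some Newton polygon appearing in Tables \ref{tab:np-first}--\ref{tab:np-last}, and the corresponding row either cites an explicit LMFDB abelian variety realizing $\rho$ with $\delta_\rho < g$, or provides a CM field and prime from which \Cref{alg:wpr-to-weil} produces such a variety (with correctness guaranteed by \Cref{thm:alg-termination} together with \Cref{prop:dimension-of-return}). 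In either case we exhibit a geometrically simple abelian variety $A$ with weight function $w$ satisfying \cond{} and $\delta_A < g$, contradicting (i).

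The main obstacle is the realization step in the converse direction: it is not enough to exhibit a geometrically simple weighted permutation representation $(w, G)$ with non-maximal angle rank, we must ensure that some such $(w, G)$ actually arises from an abelian variety. This is precisely where one needs to verify the existence of a strongly $p$-admissible filtration (see \Cref{def:p-admissible-filtration}) together with a CM field realizing $G$ and the prescribed decomposition data, and then invoke \Cref{thm:alg-termination} (or equivalently \Cref{prop:ord-igp}) to build the desired $q$-Weil number. For $g \leq 6$ this is carried out by exhaustive search, and the resulting examples populate Tables \ref{tab:np-first}--\ref{tab:np-last}; the complement of the Newton polygons appearing there is, by definition, \Cref{tab:max-ar}, establishing the equivalence.
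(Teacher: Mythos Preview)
Your proposal is correct and takes essentially the same approach as the paper: both reduce to the enumeration carried out in the proof of \Cref{thm:exceptions}, passing through \Cref{lemma:ar-lemma} and \Cref{lemma:geom-irred} to translate between abelian varieties and weighted permutation representations, and appeal to explicit realizations (LMFDB examples or \Cref{alg:wpr-to-weil} with \Cref{thm:alg-termination} and \Cref{prop:dimension-of-return}) for the direction (i) $\Rightarrow$ (ii). One minor imprecision: the printed tables \Crefrange{tab:np-first}{tab:np-last} only cover $g \leq 5$, so for $g = 6$ the realization step underlying (i) $\Rightarrow$ (ii) must draw on the data in \cite{OurElectronic} rather than the appendix tables---the paper is equally terse on this point.
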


\begin{table}[H]
  \setlength{\arrayrulewidth}{0.3mm} 
  \setlength{\tabcolsep}{5pt}
  \renewcommand{\arraystretch}{1.2}
  \centering
  \begin{tabular}{|c|c|}
    \hline
    \rowcolor{headercolor}
    Dimension & Newton polygon  \\ \hline
    $2$ & $[0,0,1,1]$, $[0,\tfrac{1}{2},\tfrac{1}{2},1]$ \\
    \hline
    $3$ & $[0,0,0,1,1,1]$, $[0,\tfrac{1}{2},\tfrac{1}{2},\tfrac{1}{2},\tfrac{1}{2},1]$, $[\tfrac{1}{3},\tfrac{1}{3},\tfrac{1}{3},\tfrac{2}{3},\tfrac{2}{3},\tfrac{2}{3}]$ \\
    \hline
    $4$ & $[0,0,0,\tfrac{1}{2},\tfrac{1}{2},1,1,1]$, $[0,\tfrac{1}{2},\tfrac{1}{2},\tfrac{1}{2},\tfrac{1}{2},\tfrac{1}{2},\tfrac{1}{2},1]$, $[\tfrac{1}{3},\tfrac{1}{3},\tfrac{1}{3},\tfrac{1}{2},\tfrac{1}{2},\tfrac{2}{3},\tfrac{2}{3},\tfrac{2}{3}]$ \\
    \hline
    \multirow{3}{*}{$5$} & $[0,0,0,0,0,1,1,1,1,1]$, $[0,0,0,\tfrac{1}{2},\tfrac{1}{2},\tfrac{1}{2},\tfrac{1}{2},1,1,1]$, $[0,0,\tfrac{1}{3},\tfrac{1}{3},\tfrac{1}{3},\tfrac{2}{3},\tfrac{2}{3},\tfrac{2}{3},1,1]$, \\
                         & $[0,\tfrac{1}{4},\tfrac{1}{4},\tfrac{1}{4},\tfrac{1}{4},\tfrac{3}{4},\tfrac{3}{4},\tfrac{3}{4},\tfrac{3}{4},1]$, $[0,\tfrac{1}{3},\tfrac{1}{3},\tfrac{1}{3},\tfrac{1}{2},\tfrac{1}{2},\tfrac{2}{3},\tfrac{2}{3},\tfrac{2}{3},1]$, $[0,\tfrac{1}{2},\tfrac{1}{2},\tfrac{1}{2},\tfrac{1}{2},\tfrac{1}{2},\tfrac{1}{2},\tfrac{1}{2},\tfrac{1}{2},1]$, \\
                         & $[\tfrac{1}{5},\tfrac{1}{5},\tfrac{1}{5},\tfrac{1}{5},\tfrac{1}{5},\tfrac{4}{5},\tfrac{4}{5},\tfrac{4}{5},\tfrac{4}{5},\tfrac{4}{5}]$, $[\tfrac{1}{3},\tfrac{1}{3},\tfrac{1}{3},\tfrac{1}{2},\tfrac{1}{2},\tfrac{1}{2},\tfrac{1}{2},\tfrac{2}{3},\tfrac{2}{3},\tfrac{2}{3}]$, $[\tfrac{2}{5},\tfrac{2}{5},\tfrac{2}{5},\tfrac{2}{5},\tfrac{2}{5},\tfrac{3}{5},\tfrac{3}{5},\tfrac{3}{5},\tfrac{3}{5},\tfrac{3}{5}]$ \\
    \hline
    \multirow{3}{*}{$6$} & $[0,0,0,0,0,\tfrac{1}{2},\tfrac{1}{2},1,1,1,1,1]$, $[0,0,\tfrac{1}{3},\tfrac{1}{3},\tfrac{1}{3},\tfrac{1}{2},\tfrac{1}{2},\tfrac{2}{3},\tfrac{2}{3},\tfrac{2}{3},1,1]$,  \\
              & $[0,\tfrac{1}{5},\tfrac{1}{5},\tfrac{1}{5},\tfrac{1}{5},\tfrac{1}{5},\tfrac{4}{5},\tfrac{4}{5},\tfrac{4}{5},\tfrac{4}{5},\tfrac{4}{5},1]$, $[0,\tfrac{1}{2},\tfrac{1}{2},\tfrac{1}{2},\tfrac{1}{2},\tfrac{1}{2},\tfrac{1}{2},\tfrac{1}{2},\tfrac{1}{2},\tfrac{1}{2},\tfrac{1}{2},1]$,  \\
              &$[\tfrac{1}{5},\tfrac{1}{5},\tfrac{1}{5},\tfrac{1}{5},\tfrac{1}{5},\tfrac{1}{2},\tfrac{1}{2},\tfrac{4}{5},\tfrac{4}{5},\tfrac{4}{5},\tfrac{4}{5},\tfrac{4}{5}]$, $[\tfrac{2}{5},\tfrac{2}{5},\tfrac{2}{5},\tfrac{2}{5},\tfrac{2}{5},\tfrac{1}{2},\tfrac{1}{2},\tfrac{3}{5},\tfrac{3}{5},\tfrac{3}{5},\tfrac{3}{5},\tfrac{3}{5}]$  \\
    \hline
  \end{tabular}
  \caption{Newton polygons for which every geometrically simple abelian variety satisfying {\cond} has maximal angle rank.}
  \label{tab:max-ar}
\end{table}

\subsection{Further examples of $5$-folds}
\label{sec:examples}
Several of the weighted permutation representations for geometrically simple $5$-folds with non-maximal angle rank do not occur for any abelian variety in the LMFDB. In these cases we record examples which were found by combining \Cref{alg:wpr-to-weil} with the LMFDB number fields database~\cite{lmfdb}. In particular, we record the endomorphism algebras $K_A$ (which are CM fields of degree $2g$) via their LMFDB label. These further examples are listed in \Crefrange{tab:eg-1}{tab:eg-3}.

Note that we do not provide explicit examples when $g = 5$ and $G \subset W_{10}$ is conjugate to the transitive group \texttt{10T11} and isomorphic to $C_2 \times A_5$ as an abstract group. Nevertheless, it is theoretically possible to compute such an example using \Cref{alg:wpr-to-weil}, as we now explain.

The CM field with LMFDB label \nfield{10.0.147723329623203.1} has Galois group conjugate to \texttt{10T11} and in each case there exists a prime number for which $\rho$ is strongly $p$-admissible. Explicitly:
\begin{enumerate}
    \item For the weighted permutation representation $\rho$ in Tables~\ref{tab:5-1} and \ref{tab:5-2} corresponding to the group \texttt{10T11}, there exist a sequence of subgroups
    \[
        G \supseteq D \supseteq G_0 \supseteq G_1, 
    \]
    with $D$ cyclic and $G_0 = G_1 = \{1\}$, which forms a strongly $p$-admissible filtration for all but finitely many choices of $p$. Choose $p$ so that $D$ is the decomposition group of some prime above $p$ in the Galois closure of the field \nfield{10.0.147723329623203.1}; such a prime exists by the Chebotarev density theorem. 
    \item For the weighted permutation representation $\rho$ in Table~\ref{tab:np-last}, set $p = 3$. A computation shows that the decomposition data of a prime above $3$ in the Galois closure of the field \nfield{10.0.147723329623203.1} forms a strongly $p$-admissible filtration of $\rho$. 
\end{enumerate}

By \Cref{thm:alg-termination} running \Cref{alg:wpr-to-weil} will return the $q$-Weil number of an (isogeny class) of abelian variety of dimension $g$ with weighted permutation representation $\rho$. It may be possible to provide explicit examples by improving the efficiency our implementation of \Cref{alg:wpr-to-weil}; see \Cref{rmk:better-implementation} for a discussion of the inefficiencies of  \Cref{alg:wpr-to-weil}.

\vspace{2cm}

\begingroup
\footnotesize
\setlength{\abovecaptionskip}{2mm}

\begin{table}[H]
  \centering
  \begin{tabular}{|C{28mm}|C{70mm}|C{8mm}|C{17mm}|C{25mm}|}
  \hline 
  \rowcolor{headercolor} 
  $w_A$-conjugacy class & Generators for $G/\iota$ & $\delta_A$ & Exceptional & Example \\
  \hline 
  \rowcolor{goodcolor}\texttt{D6.6.t.a.2} & $(\decor{1} \bar{\decor{2}} \bar{\decor{3}} )(\decor{2} \decor{ 3} \bar{\decor{1}} ), (\decor{1} \bar{\decor{2}} )(\decor{2} \bar{\decor{1}} )$ & 2 & No & \avlink{3.2.ac_b_a} \\ 
  \hline 
  \end{tabular}
  \caption{Geometrically simple abelian 3-folds with Newton polygon $[0,0,\tfrac{1}{2},\tfrac{1}{2},1,1]$ and angle rank $\delta_A \neq g$.}
  \label{tab:np-first}
\end{table} 

\begin{table}[H]
  \centering
  \begin{tabular}{|C{28mm}|C{70mm}|C{8mm}|C{17mm}|C{25mm}|}
  \hline 
  \rowcolor{headercolor} 
  $w_A$-conjugacy class & Generators for $G/\iota$ & $\delta_A$ & Exceptional & Example \\
  \hline 
  \texttt{8T9.8.t.a.5} & $(\decor{1} \decor{ 2} )(\decor{3} \decor{ 4} )(\bar{\decor{1}} \bar{\decor{2}} )(\bar{\decor{3}} \bar{\decor{4}} ), (\decor{1} \bar{\decor{3}} \decor{ 4} \bar{\decor{2}} )(\decor{2} \bar{\decor{1}} \decor{ 3} \bar{\decor{4}} )$ & 3 & Yes & \avlink{4.3.ae_k_ay_bw} \\ 
  \hline 
  \texttt{8T13.8.t.a.2} & $(\decor{1} \decor{ 4} \bar{\decor{2}} \bar{\decor{1}} \bar{\decor{4}} \decor{ 2} )(\decor{3} \bar{\decor{3}} ), (\decor{1} \decor{ 2} )(\decor{3} \bar{\decor{4}} )(\decor{4} \bar{\decor{3}} )(\bar{\decor{1}} \bar{\decor{2}} )$ & 3 & Yes & \avlink{4.4.ab_af_a_bc} \\ 
  \hline 
  \texttt{8T24.8.t.a.1} & $(\decor{1} \bar{\decor{1}} )(\decor{2} \bar{\decor{3}} )(\decor{3} \bar{\decor{2}} )(\decor{4} \bar{\decor{4}} ), (\decor{1} \bar{\decor{3}} \bar{\decor{2}} \bar{\decor{4}} )(\decor{2} \decor{ 4} \bar{\decor{1}} \decor{ 3} )$ & 3 & Yes & \avlink{4.2.ac_b_c_ag} \\ 
  \hline 
  \end{tabular}
  \caption{Geometrically simple abelian 4-folds with Newton polygon $[0,0,\tfrac{1}{2},\tfrac{1}{2},\tfrac{1}{2},\tfrac{1}{2},1,1]$ and angle rank $\delta_A \neq g$.} 
\end{table}

\begin{table}[H]
  \centering
  \begin{tabular}{|C{28mm}|C{70mm}|C{8mm}|C{17mm}|C{25mm}|}
  \hline 
  \rowcolor{headercolor} 
  $w_A$-conjugacy class & Generators for $G/\iota$ & $\delta_A$ & Exceptional & Example \\
  \hline 
  \texttt{8T13.8.t.a.3} & $(\decor{1} \bar{\decor{4}} \bar{\decor{3}} \bar{\decor{1}} \decor{ 4} \decor{ 3} )(\decor{2} \bar{\decor{2}} ), (\decor{1} \bar{\decor{2}} \decor{ 4} )(\decor{2} \bar{\decor{4}} \bar{\decor{1}} )$ & 3 & Yes & \avlink{4.2.ac_b_ac_f} \\ 
  \hline 
  \texttt{8T24.8.t.a.4} & $(\decor{2} \bar{\decor{3}} )(\decor{3} \bar{\decor{2}} ), (\decor{1} \decor{ 3} \bar{\decor{4}} \bar{\decor{1}} \bar{\decor{3}} \decor{ 4} )(\decor{2} \bar{\decor{2}} )$ & 3 & Yes & \avlink{4.2.ad_c_g_ap} \\ 
  \hline 
  \end{tabular}
  \caption{Geometrically simple abelian 4-folds with Newton polygon $[0,0,0,0,1,1,1,1]$ and angle rank $\delta_A \neq g$.} 
\end{table}

\begin{table}[H]
  \centering
  \begin{tabular}{|C{28mm}|C{70mm}|C{8mm}|C{17mm}|C{25mm}|}
  \hline 
  \rowcolor{headercolor} 
  $w_A$-conjugacy class & Generators for $G/\iota$ & $\delta_A$ & Exceptional & Example \\
  \hline 
  \texttt{8T13.8.t.a.2} & $(\decor{1} \bar{\decor{2}} )(\decor{2} \bar{\decor{1}} )(\decor{3} \decor{ 4} )(\bar{\decor{3}} \bar{\decor{4}} ), (\decor{1} \bar{\decor{1}} )(\decor{2} \bar{\decor{3}} \decor{ 4} \bar{\decor{2}} \decor{ 3} \bar{\decor{4}} )$ & 3 & Yes & \avlink{4.3.ab_a_g_ag} \\ 
  \hline 
  \texttt{8T24.8.t.a.1} & $(\decor{1} \decor{ 2} \bar{\decor{4}} \bar{\decor{1}} \bar{\decor{2}} \decor{ 4} )(\decor{3} \bar{\decor{3}} ), (\decor{1} \bar{\decor{1}} )(\decor{2} \bar{\decor{3}} )(\decor{3} \bar{\decor{2}} )(\decor{4} \bar{\decor{4}} )$ & 3 & Yes & \avlink{4.2.ad_e_ag_k} \\ 
  \hline 
  \end{tabular}
  \caption{Geometrically simple abelian 4-folds with Newton polygon $[0,\tfrac{1}{3},\tfrac{1}{3},\tfrac{1}{3},\tfrac{2}{3},\tfrac{2}{3},\tfrac{2}{3},1]$ and angle rank $\delta_A \neq g$.} 
\end{table}

\begin{table}[H]
  \centering
  \begin{tabular}{|C{28mm}|C{70mm}|C{8mm}|C{17mm}|C{25mm}|}
  \hline 
  \rowcolor{headercolor} 
  $w_A$-conjugacy class & Generators for $G/\iota$ & $\delta_A$ & Exceptional & Example \\
  \hline 
  \texttt{8T13.8.t.a.3} & $(\decor{1} \decor{ 4} \bar{\decor{3}} )(\decor{3} \bar{\decor{1}} \bar{\decor{4}} ), (\decor{1} \bar{\decor{1}} )(\decor{2} \decor{ 4} \decor{ 3} \bar{\decor{2}} \bar{\decor{4}} \bar{\decor{3}} )$ & 3 & Yes & \avlink{4.4.ae_k_ay_ca} \\ 
  \hline 
  \texttt{8T24.8.t.a.4} & $(\decor{2} \bar{\decor{3}} \bar{\decor{4}} )(\decor{3} \decor{ 4} \bar{\decor{2}} ), (\decor{1} \bar{\decor{2}} \bar{\decor{4}} \decor{ 3} )(\decor{2} \decor{ 4} \bar{\decor{3}} \bar{\decor{1}} )$ & 3 & Yes & \avlink{4.2.ac_a_e_ag} \\ 
  \hline 
  \end{tabular}
  \caption{Geometrically simple abelian 4-folds with Newton polygon $[\tfrac{1}{4},\tfrac{1}{4},\tfrac{1}{4},\tfrac{1}{4},\tfrac{3}{4},\tfrac{3}{4},\tfrac{3}{4},\tfrac{3}{4}]$ and angle rank $\delta_A \neq g$.} 
\end{table} 

\begin{table}[H]
  \centering
  \begin{tabular}{|C{28mm}|C{70mm}|C{8mm}|C{17mm}|C{25mm}|}
  \hline 
  \rowcolor{headercolor} 
  $w_A$-conjugacy class & Generators for $G/\iota$ & $\delta_A$ & Exceptional & Example \\
  \hline
  \rowcolor{goodcolor}\texttt{D10.10.t.a.2} & $(\decor{2} \decor{ 3} )(\decor{4} \decor{ 5} )(\bar{\decor{2}} \bar{\decor{3}} )(\bar{\decor{4}} \bar{\decor{5}} ), (\decor{1} \decor{ 4} )(\decor{3} \bar{\decor{5}} )(\decor{5} \bar{\decor{3}} )(\bar{\decor{1}} \bar{\decor{4}} )$ & 4 & No & \Cref{tab:eg-1} \\ 
  \hline 
  \rowcolor{goodcolor}\texttt{10T5.10.t.a.5} & $(\decor{1} \bar{\decor{3}} \bar{\decor{2}} \decor{ 4} )(\decor{2} \bar{\decor{4}} \bar{\decor{1}} \decor{ 3} ), (\decor{1} \bar{\decor{1}} )(\decor{2} \bar{\decor{3}} )(\decor{3} \bar{\decor{2}} )(\decor{4} \bar{\decor{5}} )(\decor{5} \bar{\decor{4}} )$ & 4 & No & \Cref{tab:eg-1} \\ 
  \hline
  \rowcolor{goodcolor}\texttt{10T5.10.t.a.32} & $(\decor{1} \bar{\decor{5}} \bar{\decor{3}} \decor{ 4} )(\decor{3} \bar{\decor{4}} \bar{\decor{1}} \decor{ 5} ), (\decor{1} \bar{\decor{4}} )(\decor{2} \bar{\decor{3}} )(\decor{3} \bar{\decor{2}} )(\decor{4} \bar{\decor{1}} )(\decor{5} \bar{\decor{5}} )$ & 4 & No & \Cref{tab:eg-1} \\ 
  \hline
  \rowcolor{goodcolor}\texttt{10T11.10.t.a.4} & $(\decor{1} \decor{ 2} )(\decor{3} \decor{ 5} )(\bar{\decor{1}} \bar{\decor{2}} )(\bar{\decor{3}} \bar{\decor{5}} ), (\decor{1} \bar{\decor{5}} \bar{\decor{4}} )(\decor{4} \bar{\decor{1}} \decor{ 5} )$ & 4 & No & \avlink{} \\ 
  \hline 
  \rowcolor{goodcolor}\texttt{10T22.10.t.a.3} & $(\decor{1} \bar{\decor{3}} \decor{ 2} )(\decor{3} \bar{\decor{2}} \bar{\decor{1}} )(\decor{4} \decor{ 5} )(\bar{\decor{4}} \bar{\decor{5}} ), (\decor{1} \bar{\decor{5}} )(\decor{2} \bar{\decor{4}} )(\decor{4} \bar{\decor{2}} )(\decor{5} \bar{\decor{1}} )$ & 4 & No & \avlink{5.2.ae_g_ae_b_a} \\ 
  \hline 
  \end{tabular}
  \caption{Geometrically simple abelian 5-folds with Newton polygon $[0,0,0,0,\tfrac{1}{2},\tfrac{1}{2},1,1,1,1]$ and angle rank $\delta_A \neq g$.}
  \label{tab:5-1}
\end{table}

\begin{table}[H]
  \centering
  \begin{tabular}{|C{28mm}|C{70mm}|C{8mm}|C{17mm}|C{25mm}|}
  \hline 
  \rowcolor{headercolor} 
  $w_A$-conjugacy class & Generators for $G/\iota$ & $\delta_A$ & Exceptional & Example \\
  \hline
  \rowcolor{goodcolor}\texttt{D10.10.t.a.2} & $(\decor{1} \decor{ 5} )(\decor{2} \bar{\decor{4}} )(\decor{4} \bar{\decor{2}} )(\bar{\decor{1}} \bar{\decor{5}} ), (\decor{1} \decor{ 4} )(\decor{3} \bar{\decor{5}} )(\decor{5} \bar{\decor{3}} )(\bar{\decor{1}} \bar{\decor{4}} )$ & 4 & No & \Cref{tab:eg-2} \\ 
  \hline 
  \rowcolor{goodcolor}\texttt{10T5.10.t.a.1} & $(\decor{1} \decor{ 3} \bar{\decor{4}} \decor{ 5} )(\decor{2} \bar{\decor{2}} )(\decor{4} \bar{\decor{5}} \bar{\decor{1}} \bar{\decor{3}} ), (\decor{1} \bar{\decor{4}} \bar{\decor{3}} \bar{\decor{2}} \bar{\decor{5}} )(\decor{2} \decor{ 5} \bar{\decor{1}} \decor{ 4} \decor{ 3} )$ & 4 & No & \Cref{tab:eg-2} \\ 
  \hline
  \rowcolor{goodcolor}\texttt{10T11.10.t.a.2} & $(\decor{1} \bar{\decor{5}} \bar{\decor{2}} \bar{\decor{3}} \bar{\decor{4}} )(\decor{2} \decor{ 3} \decor{ 4} \bar{\decor{1}} \decor{ 5} ), (\decor{1} \bar{\decor{4}} \bar{\decor{3}} \bar{\decor{5}} \bar{\decor{2}} )(\decor{2} \bar{\decor{1}} \decor{ 4} \decor{ 3} \decor{ 5} )$ & 4 & No & \avlink{} \\ 
  \hline 
  \rowcolor{goodcolor}\texttt{10T22.10.t.a.2} & $(\decor{1} \decor{ 3} \decor{ 4} )(\decor{2} \bar{\decor{5}} )(\decor{5} \bar{\decor{2}} )(\bar{\decor{1}} \bar{\decor{3}} \bar{\decor{4}} ), (\decor{1} \decor{ 5} \bar{\decor{2}} \decor{ 4} \decor{ 3} )(\decor{2} \bar{\decor{4}} \bar{\decor{3}} \bar{\decor{1}} \bar{\decor{5}} )$ & 4 & No & \avlink{5.2.ac_b_a_a_a} \\ 
  \hline
  \end{tabular}
  \caption{Geometrically simple abelian 5-folds with Newton polygon $[0,0,\tfrac{1}{2},\tfrac{1}{2},\tfrac{1}{2},\tfrac{1}{2},\tfrac{1}{2},\tfrac{1}{2},1,1]$ and angle rank $\delta_A \neq g$.}
  \label{tab:5-2}
\end{table}

\begin{table}[H]
  \centering
  \begin{tabular}{|C{28mm}|C{70mm}|C{8mm}|C{17mm}|C{25mm}|}
  \hline 
  \rowcolor{headercolor} 
  $w_A$-conjugacy class & Generators for $G/\iota$ & $\delta_A$ & Exceptional & Example \\
  \hline 
  \rowcolor{goodcolor}\texttt{10T5.10.t.a.32} & $(\decor{1} \bar{\decor{5}} \bar{\decor{3}} \decor{ 4} )(\decor{3} \bar{\decor{4}} \bar{\decor{1}} \decor{ 5} ), (\decor{1} \decor{ 2} \decor{ 4} \decor{ 3} )(\decor{5} \bar{\decor{5}} )(\bar{\decor{1}} \bar{\decor{2}} \bar{\decor{4}} \bar{\decor{3}} )$ & 4 & No & \Cref{tab:eg-3} \\ 
  \hline
  \rowcolor{goodcolor}\texttt{10T11.10.t.a.4} & $(\decor{1} \bar{\decor{4}} \bar{\decor{5}} \decor{ 2} \bar{\decor{3}} )(\decor{3} \bar{\decor{1}} \decor{ 4} \decor{ 5} \bar{\decor{2}} ), (\decor{1} \bar{\decor{1}} )(\decor{2} \decor{ 4} )(\decor{3} \bar{\decor{5}} )(\decor{5} \bar{\decor{3}} )(\bar{\decor{2}} \bar{\decor{4}} )$ & 4 & No &  \\ 
  \hline 
  \rowcolor{goodcolor}\texttt{10T22.10.t.a.3} & $(\decor{1} \decor{ 2} \bar{\decor{5}} )(\decor{5} \bar{\decor{1}} \bar{\decor{2}} ), (\decor{1} \bar{\decor{3}} \bar{\decor{5}} )(\decor{2} \bar{\decor{4}} )(\decor{3} \decor{ 5} \bar{\decor{1}} )(\decor{4} \bar{\decor{2}} )$ & 4 & No & \avlink{5.2.a_ac_ae_c_m} \\ 
  \hline 
  \end{tabular}
  \caption{Geometrically simple abelian 5-folds with Newton polygon $[\tfrac{1}{4},\tfrac{1}{4},\tfrac{1}{4},\tfrac{1}{4},\tfrac{1}{2},\tfrac{1}{2},\tfrac{3}{4},\tfrac{3}{4},\tfrac{3}{4},\tfrac{3}{4}]$ and angle rank $\delta_A \neq g$.}
  \label{tab:np-last}
\end{table} 

\endgroup

\begingroup
\footnotesize

\begin{table}[H]
  \centering
  \begin{tabular}{|c|c|c|p{85mm}|}
    \hline
    \rowcolor{headercolor}
    $w_A$-conjugacy class  & $K_A$                          & $q$ & \multicolumn{1}{c|}{$P_A(T)$} \\
    \hline
    \texttt{D10.10.t.a.2} & \nfield{10.0.6283241669043.1} & $17^2$  &  $\begin{array}{l} T^{10} - 3 T^{9} + 223 T^{8} + 3689 T^{7} + 12955 T^{6} + 2383655 T^{5} \\ +3743995 T^{4} + 308108969 T^{3} + 5382677887 T^{2} \\ -20927272323 T + 2015993900449 \end{array}$      \\ \hline
    \texttt{10T5.10.t.a.5} & \nfield{10.0.22696305796096.1} & $47^2$  &  $\begin{array}{l} T^{10} + 50T^9 + 3518T^8 + 121740T^7 - 569290T^6 +\\ 103747612T^5 -  1257561610T^4 + 594052364940T^3 \\ +37921279527422T^2 + 1190564333088050T \\ +52599132235830049 \end{array}$      \\ \hline
    \texttt{10T5.10.t.a.32} & \nfield{10.0.22696305796096.1} & $3^2$  &  $\begin{array}{l} T^{10} + 9 T^{8} + 32 T^{7} + 110 T^{6} + 192 T^{5} + 990 T^{4} + 2592 T^{3} \\+ 6561 T^{2} + 59049 \end{array}$      \\ \hline
  \end{tabular}
  \caption{Examples of geometrically simple abelian $5$-folds with given weighted permutation representation, Newton polygon $[0,0,0,0,\tfrac{1}{2},\tfrac{1}{2},1,1,1,1]$, and non-maximal angle rank $\delta_A \neq g$.}
  \label{tab:eg-1}
\end{table}

\begin{table}[H]
  \centering
  \begin{tabular}{|c|c|c|p{85mm}|}
    \hline
    \rowcolor{headercolor}
    $w_A$-conjugacy class  & $K_A$                          & $q$ & \multicolumn{1}{c|}{$P_A(T)$} \\
    \hline
    \texttt{D10.10.t.a.2} & \nfield{10.0.6283241669043.1} & $17^2$ & $\begin{array}{l} T^{10} - 64 T^{9} + 1596 T^{8} - 8075 T^{7} - 542164 T^{6} + 15402255 T^{5} \\- 156685396 T^{4} - 674432075 T^{3} + 38523560124 T^{2} \\- 446448476224 T + 2015993900449\end{array}$ \\ \hline
    \texttt{10T5.10.t.a.1} & \nfield{10.0.22696305796096.1} & $47^2$  &  $\begin{array}{l} T^{10} - 134 T^{9} + 4100 T^{8} + 47658 T^{7} + 12317384 T^{6} \\ -1369010078 T^{5} + 27209101256 T^{4} + 232555837098 T^{3} \\+ 44194782848900 T^{2} - 3190712412675974 T \\+ 52599132235830049 \end{array}$      \\ \hline
  \end{tabular}
  \caption{Examples of geometrically simple abelian $5$-folds with given weighted permutation representation, Newton polygon $[0,0,\tfrac{1}{2},\tfrac{1}{2},\tfrac{1}{2},\tfrac{1}{2},\tfrac{1}{2},\tfrac{1}{2},1,1]$, and non-maximal angle rank $\delta_A \neq g$.}
  \label{tab:eg-2}
\end{table}

\begin{table}[H]
  \centering
  \begin{tabular}{|c|c|c|p{85mm}|}
    \hline
    \rowcolor{headercolor}
    $w_A$-conjugacy class  & $K_A$                          & $q$ & \multicolumn{1}{c|}{$P_A(T)$} \\
    \hline
    \texttt{10T5.10.t.a.32} & \nfield{10.0.22696305796096.1} & $3^4$  &  $\begin{array}{l} T^{10} + 81 T^{8} - 864 T^{7} + 8910 T^{6} - 46656 T^{5} + 721710 T^{4} \\- 5668704 T^{3} + 43046721 T^{2} + 3486784401 \end{array}$      \\ \hline
  \end{tabular}
  \caption{Examples of geometrically simple abelian $5$-folds with given weighted permutation representation, Newton polygon $[\tfrac{1}{4},\tfrac{1}{4},\tfrac{1}{4},\tfrac{1}{4},\tfrac{1}{2},\tfrac{1}{2},\tfrac{3}{4},\tfrac{3}{4},\tfrac{3}{4},\tfrac{3}{4}]$
, and non-maximal angle rank $\delta_A \neq g$.}
  \label{tab:eg-3}
\end{table}

\endgroup

\providecommand{\bysame}{\leavevmode\hbox to3em{\hrulefill}\thinspace}
\providecommand{\MR}{\relax\ifhmode\unskip\space\fi MR }
\providecommand{\MRhref}[2]{%
  \href{http://www.ams.org/mathscinet-getitem?mr=#1}{#2}
}
\providecommand{\href}[2]{#2}


\begin{thebibliography}{SanSamSam25}

\bibitem[AFV]{OurElectronic}
S.~{Arango-Pi{\~n}eros}, S.~{Frengley}, and S.~{Vemulapalli}, \emph{Git{H}ub
  repository}, \url{https://github.com/SamFrengley/exceptional-tate-classes}.

\bibitem[AFV24]{us}
\bysame, \emph{{Galois groups of low dimensional abelian varieties over finite
  fields}}, arXiv e-prints (2024), arXiv:2412.03358.

\bibitem[Anc21]{Ancona-standardconjs}
G.~Ancona, \emph{Standard conjectures for abelian fourfolds}, Invent. Math.
  \textbf{223} (2021), no.~1, 149--212. \MR{4199442}

\bibitem[AS10]{AhmadiShparlinsky10}
O.~Ahmadi and I.~E. Shparlinski, \emph{On the distribution of the number of
  points on algebraic curves in extensions of finite fields}, Math. Res. Lett.
  \textbf{17} (2010), no.~4, 689--699. \MR{2661173}

\bibitem[BCP97]{magma}
W.~{Bosma}, J.~{Cannon}, and C.~{Playoust}, \emph{The {M}agma algebra system.
  {I}. {T}he user language}, J. Symbolic Comput. \textbf{24} (1997), no.~3-4,
  235--265, Computational algebra and number theory (London, 1993).
  \MR{1484478}

\bibitem[BN24]{BN-tameGP}
E.~{Boughattas} and D.~{Neftin}, \emph{{The Grunwald problem and homogeneous
  spaces with non-solvable stabilisers}}, arXiv e-prints (2024),
  arXiv:2404.09874.

\bibitem[CCO14]{chai-conrad-oort}
C.-L. Chai, B.~Conrad, and F.~Oort, \emph{Complex multiplication and lifting
  problems}, Mathematical Surveys and Monographs, vol. 195, American
  Mathematical Society, Providence, RI, 2014. \MR{3137398}

\bibitem[DKRV21a]{DupuyKedlayaRoeVincent21}
T.~Dupuy, K.~Kedlaya, D.~Roe, and C.~Vincent, \emph{Counterexamples to a
  {C}onjecture of {A}hmadi and {S}hparlinski}, Experimental Mathematics (2021),
  1--5.

\bibitem[DKRV21b]{DupuyKedlayaRoeVincent22}
\bysame, \emph{Isogeny {C}lasses of {A}belian {V}arieties over {F}inite
  {F}ields in the {LMFDB}}, Arithmetic Geometry, Number Theory, and Computation
  (Cham) (J.~S. Balakrishnan, N.~Elkies, B.~Hassett, B.~Poonen, A.~V.
  Sutherland, and J.~Voight, eds.), Springer International Publishing, 2021,
  pp.~375--448.

\bibitem[DKZ24]{DupuyKedlayaZureick-Brown22}
T.~Dupuy, K.~Kedlaya, and D.~{Zureick-Brown}, \emph{Angle ranks of abelian
  varieties}, Math. Ann. \textbf{389} (2024), no.~1, 169--185. \MR{4735944}

\bibitem[DLN17]{dan}
C.~Demarche, G.~Lucchini, and D.~Neftin, \emph{The {Grunwald} problem and
  approximation properties for homogeneous spaces}, Annales de l'Institut
  Fourier \textbf{67} (2017), no.~3, 1009--1033 (en).

\bibitem[HW24]{harpaz-wittenberg:supersolvable}
Y.~Harpaz and O.~Wittenberg, \emph{Supersolvable descent for rational points},
  Algebra Number Theory \textbf{18} (2024), no.~4, 787--814. \MR{4708034}

\bibitem[Liu17]{Liu-inertia}
Y.~Liu, \emph{{The Realizability Problem with Inertia Conditions}}, arXiv
  e-prints (2017), arXiv:1705.03184.

\bibitem[Liu19]{Liu-inertia-thesis}
\bysame, \emph{The {R}ealizability {P}roblem with {P}rescribed {I}nertia
  {C}onditions}, ProQuest LLC, Ann Arbor, MI, 2019, Thesis (Ph.D.)--The
  University of Wisconsin - Madison. \MR{4035600}

\bibitem[{LMF}24]{lmfdb}
The {LMFDB Collaboration}, \emph{The {L}-functions and modular forms database},
  \url{https://www.lmfdb.org}, 2024, [Online; accessed 20 October 2024].

\bibitem[LO74]{lenstra-oort}
H.~W. {Lenstra, Jr.} and F.~Oort, \emph{Simple abelian varieties having a
  prescribed formal isogeny type}, J. Pure Appl. Algebra \textbf{4} (1974),
  47--53. \MR{354686}

\bibitem[LZ93]{LenstraZarhin1993}
H.~W. Lenstra, Jr. and Y.~G. Zarhin, \emph{The {T}ate conjecture for almost
  ordinary abelian varieties over finite fields}, Advances in number theory
  ({K}ingston, {ON}, 1991), Oxford Sci. Publ., Oxford Univ. Press, New York,
  1993, pp.~179--194. \MR{1368419}

\bibitem[Mil99]{Milne99}
J.~S. Milne, \emph{Lefschetz motives and the {T}ate conjecture}, Compositio
  Math. \textbf{117} (1999), no.~1, 45--76. \MR{1692999}

\bibitem[NSW08]{neukirch-cohomology}
J.~Neukirch, A.~Schmidt, and K.~Wingberg, \emph{Cohomology of number fields},
  second ed., Grundlehren der mathematischen Wissenschaften [Fundamental
  Principles of Mathematical Sciences], vol. 323, Springer-Verlag, Berlin,
  2008. \MR{2392026}

\bibitem[Poo06]{Poonen06}
B.~Poonen, \emph{Lectures on rational points on curves},
  \url{https://math.mit.edu/~poonen/papers/curves.pdf}, March 2006.

\bibitem[Ser79]{Serre67}
J.-P. Serre, \emph{Local fields}, Graduate Texts in Mathematics, vol.~67,
  Springer-Verlag, New York-Berlin, 1979, Translated from the French by Marvin
  Jay Greenberg. \MR{554237}

\bibitem[Ser13]{Serre2013}
\bysame, \emph{Oeuvres/{C}ollected papers. {IV}. 1985--1998}, Springer
  Collected Works in Mathematics, Springer, Heidelberg, 2013, Reprint of the
  2000 edition [MR1730973]. \MR{3185222}

\bibitem[Tan84]{Tankeev1984}
S.~G. Tankeev, \emph{On cycles on abelian varieties of prime dimension over
  finite or number fields}, Mathematics of the USSR-Izvestiya \textbf{22}
  (1984), no.~2, 329.

\bibitem[Tat65]{tate-cycles-zeta}
J.~T. Tate, \emph{Algebraic cycles and poles of zeta functions}, Arithmetical
  {A}lgebraic {G}eometry ({P}roc. {C}onf. {P}urdue {U}niv., 1963), Harper \&
  Row, New York, 1965, pp.~93--110. \MR{225778}

\bibitem[Tat66]{Tate1966}
\bysame, \emph{Endomorphisms of abelian varieties over finite fields},
  Inventiones mathematicae \textbf{2} (1966), no.~2, 134--144.

\bibitem[Tat71]{Tate1971}
\bysame, \emph{Classes d'isog\'{e}nie des vari\'{e}t\'{e}s ab\'{e}liennes sur
  un corps fini (d'apr\`es {T}. {H}onda)}, S\'{e}minaire {B}ourbaki. {V}ol.
  1968/69: {E}xpos\'{e}s 347--363, Lecture Notes in Math., vol. 175, Springer,
  Berlin, 1971, pp.~Exp. No. 352, 95--110. \MR{3077121}

\bibitem[Tot17]{Totaro}
B.~Totaro, \emph{Recent progress on the {T}ate conjecture}, Bull. Amer. Math.
  Soc. (N.S.) \textbf{54} (2017), no.~4, 575--590. \MR{3683625}

\bibitem[{Wan}50]{wang}
S.~{Wang}, \emph{{On Grunwald’s Theorem}}, Annals of Mathematics \textbf{51}
  (1950), 471–84.

\bibitem[Zar91]{Zarhin91}
Y.~G. Zarhin, \emph{Abelian varieties of {K3} type and $\ell$-adic
  representations}, ICM-90 Satellite Conference Proceedings (Tokyo) (A.~Fujiki,
  K.~Kato, Y.~Kawamata, T.~Katsura, and Y.~Miyaoka, eds.), Springer Japan,
  1991, pp.~231--255.

\bibitem[Zar93]{Zarhin90}
\bysame, \emph{Abelian varieties of {$K3$} type}, S\'{e}minaire de
  {T}h\'{e}orie des {N}ombres, {P}aris, 1990--91, Progr. Math., vol. 108,
  Birkh\"{a}user Boston, Boston, MA, 1993, pp.~263--279. \MR{1263531}

\bibitem[Zar94]{Zarhin94}
\bysame, \emph{The {T}ate conjecture for non-simple abelian varieties over
  finite fields}, pp.~267--298, De Gruyter, Berlin, New York, 1994.

\bibitem[Zar15]{Zarhin15}
\bysame, \emph{Eigenvalues of {F}robenius endomorphisms of abelian varieties of
  low dimension}, Journal of Pure and Applied Algebra \textbf{219} (2015),
  no.~6, 2076--2098.

\end{thebibliography}
\end{document}